\documentclass[12pt]{amsart}
\usepackage[margin=1.0in]{geometry}
\usepackage{amsmath,amssymb,amsfonts,graphicx,color,fancyhdr,psfrag,comment,enumerate,mathabx}
\usepackage[latin1]{inputenc}
\usepackage[hyperpageref]{backref}
\usepackage[colorlinks=true, pdfstartview=FitV, linkcolor=blue, citecolor=blue, urlcolor=blue]{hyperref}
\usepackage[capitalise,noabbrev]{cleveref}
\usepackage{tcolorbox,longfbox}
\allowdisplaybreaks
\usepackage{epstopdf}
\usepackage{mathrsfs}
\usepackage{epsfig}
\usepackage{hyperref}

\usepackage{ifthen}
\usepackage{float}
\usepackage{enumerate}
\usepackage{mathtools}
\usepackage[bottom]{footmisc}
\usepackage{tikz}
\usepackage{comment}
\usetikzlibrary{matrix,shapes,arrows,positioning,chains}
\usepackage{amssymb,amsmath,amsfonts}
\usepackage{bbm}

\numberwithin{equation}{section}

\newtheorem{theorem}{Theorem}[section]
\newtheorem{proposition}[theorem]{Proposition}
\newtheorem{lemma}[theorem]{Lemma}

\newcommand{\R}{\mathbb{R}}
\newcommand{\C}{\mathbb{C}}
\newcommand{\h}{\mathbb{H}}

\begin{document}
\title[Grushin operator with drift]{Riesz transforms associated with the \\ Grushin operator with drift}

\author[N. Garg and R. Garg]
{Nishta Garg \and Rahul Garg}

\address[N. Garg]{Department of Mathematics, Indian Institute of Science Education and Research Bhopal, Bhopal--462066, Madhya Pradesh, India.}
\email{nishta21@iiserb.ac.in}

\address[R. Garg]{Department of Mathematics, Indian Institute of Science Education and Research Bhopal, Bhopal--462066, Madhya Pradesh, India.}
\email{rahulgarg@iiserb.ac.in}

\subjclass[2020]{Primary: 42B20. Secondary: 22E25, 58J35}
\keywords{Riesz transforms, Heat semigroup, Grushin operator, Heisenberg-Reiter group.}

\begin{abstract}
We consider the Grushin operator with drift which is symmetric with respect to a measure having exponential growth.  For the corresponding Riesz transforms, we study strong-type $(p, p)$, $1 < p < \infty$, and weak-type $(1, 1)$ boundedness. 
\end{abstract}

\maketitle

\section{Introduction}
In the present work, we are concerned with the Grushin operator on $\R^{n+m}$. These subelliptic operators were introduced in 1970 in \cite{Grushin-Math-Sb-1970} on $\R^2$. The Grushin operator $G$ induces a sub-Riemannian metric $\tilde{d}$ on $\R^{n+m}$ and it is well known from the work of \cite{Analysis-degenerate-elliptic-opertators-Robinson-Sikora-Math-Z-2008} that the space $(\R^{n+m}, \, \tilde{d}, \, dx)$ is of homogeneous type in the sense of Coifman--Weiss \cite{Coifman-Weiss-book-1971}, that is, it is a doubling metric measure space. 
Over the last decade or so, there has been a lot of work done on various problems concerning the Grushin operator. 
In particular, their spectral multipliers have been systematically studied in a series of works \cite{Riesz-trans-multipliers-Grushin-oper-Jotsaroop-Sanjay-thangavelu-JDM-2014, Martini-Sikora-sharp-multiplier-Grushin-MRL-2012, Martini-Muller-sharp-multiplier-Grushin-Revista-2014, Dallara-Martini-robust-approach-multiplier-grushin-TAMS-2020, Dallara-Martini-aptimal-multiplier-grushin-part-II-JFAA-2022, Dallara-Martini-aptimal-multiplier-grushin-part-I-Revista-2023}, which have also motivated some closely related works on the pseudo-multipliers \cite{Bagchi-Basak-Garg-Ghosh-sparse-pseudo-multiplier-grushin-I-JFAA-2023, Bagchi-Basak-Garg-Ghosh-sparse-pseudo-multiplier-grushin-II-JGA-2024, Bagchi-Basak-Garg-Ghosh-sparse-pseudo-multiplier-grushin-III-JMAA-2024}. 

\medskip 
Several authors have studied Riesz transforms associated with the Grushin operator $G$. Let us recall some of the important works done in this context. 
In the limited case of $m=1$, the boundedness of these operators was first established by Jotsaroop et al \cite{Riesz-trans-multipliers-Grushin-oper-Jotsaroop-Sanjay-thangavelu-JDM-2014} on $L^p(\R^{n+1})$, for $1< p< \infty$. Shortly afterwards, Sanjay--Thangavelu \cite{Dim-free-Riesz-trans-Grushin-oper-PAMS-Sanjay-thangavelu-2014} showed their dimension-free boundedness. Later on, Robinson--Sikora \cite{Riesz_transform_Robinson} proved the $L^p$-boundedness for $1<p<\infty$ and weak-type $(1,1)$ of these Riesz transforms in a more general setup. The key idea in their work is to relate these Riesz transforms with those for suitable nilpotent Lie groups and with that they could build upon the arguments adapting Coifman--Weiss transference techniques \cite{Transference_methods_in_analysis}. This approach has also been successfully utilised in the works \cite{Martini-Sikora-sharp-multiplier-Grushin-MRL-2012, Dziubanski-Jotsaroop-H1-BMO-Grushin-JFAA-2016}. We would also like to refer to \cite{Dziubanski-Sikora-Lie-group-approach-JLT-2021} where this beautiful technique has been explored further to show its usefulness in studying a broad class of examples. 

\medskip 
The aim of this article is to consider the Grushin operator with drift, which is symmetric with respect to a measure having exponential growth, and study strong-type $(p,p)$ and weak-type $(1,1)$ boundedness properties of the associated Riesz transforms of first and higher orders. Before describing our setup and the results, let us first do a quick survey of the relevant literature, particularly for the Laplacian with drift on the Euclidean space. 

\medskip 
Given a non-zero vector $\nu = ( \nu_1, \ldots, \nu_n) \in \R^n$, consider the Laplacian with drift given by 
$$
\Delta_{\nu} = - \Delta - 2 \, \nu \cdot \nabla = - \sum_{j=1}^{n} \frac{\partial^2}{\partial x_j^2} - 2 \sum_{j=1}^n \nu_j \frac{\partial}{\partial x_j}.
$$
It is well known that $\Delta_{\nu}$ is positive-definite and essentially self-adjoint on $L^2(\R^n, \, d\mu_\nu)$, where $d\mu_{\nu}(x) = e^{2\nu\cdot x} \, dx$. 
Clearly, the measure $d\mu_{\nu}$ is of exponential volume growth, and therefore the classical Calder\'{o}n-Zygmund theory is not directly applicable to study the corresponding Riesz transforms $R_\alpha  = \nabla^\alpha (\Delta_\nu)^{-|\alpha|/2}$. 

\medskip 
Such Riesz transforms were first studied by Lohou\'{e}--Mustapha \cite{Lohoue-Mustapha-drift-2004}. In fact, the work of Lohou\'{e}--Mustapha is in a more general context, namely on the amenable groups. Confining ourselves to the Euclidean space set-up, the following is their result. 
\begin{theorem}[Lohou\'{e}--Mustapha \cite{Lohoue-Mustapha-drift-2004}] 
\label{thm:Lohoue-Mustapha-2004-result}
For any $1< p <\infty$ and $\alpha \in \mathbb{N}^n$ with $|\alpha| \geq 1$, the Riesz transform $R_\alpha  = \nabla^\alpha (\Delta_\nu)^{-|\alpha|/2}$ is bounded on $L^p(\R^n, d\mu_\nu).$
\end{theorem}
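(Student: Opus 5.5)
Our approach is to conjugate the drift away by the ground-state transform. Let $U f(x) = e^{\nu\cdot x} f(x)$. Then $U$ is an isometry from $L^2(\R^n, d\mu_\nu)$ onto $L^2(\R^n, dx)$, and a direct computation gives
$$
U \Delta_\nu U^{-1} = -\Delta + |\nu|^2 .
$$
Consequently $(\Delta_\nu)^{-|\alpha|/2} = U^{-1} (-\Delta+|\nu|^2)^{-|\alpha|/2} U$. Commuting $\partial^\alpha$ through the weight $e^{-\nu\cdot x}$ gives
$$
\partial^\alpha\bigl(e^{-\nu\cdot x} g\bigr) = e^{-\nu\cdot x} (\partial - \nu)^\alpha g .
$$
Hence $R_\alpha = U^{-1} T_\alpha U$, where $T_\alpha = (\partial-\nu)^\alpha (-\Delta+|\nu|^2)^{-|\alpha|/2}$ is a Fourier multiplier on $\R^n$ with symbol
$$
m_\alpha(\xi) = \prod_j (i\xi_j - \nu_j)^{\alpha_j}\,(|\xi|^2+|\nu|^2)^{-|\alpha|/2} .
$$
The symbol $m_\alpha$ is bounded and satisfies Mikhlin's conditions, so $T_\alpha$ is bounded on $L^2(dx)$. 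This settles $p=2$.

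For $p\neq 2$, however, $U$ is no longer an isometry from $L^p(d\mu_\nu)$ to $L^p(dx)$. We have $\|f\|_{L^p(d\mu_\nu)} = \|e^{2\nu\cdot x/p} f\|_{L^p}$, so the correct conjugation is $U_p f = e^{2\nu\cdot x/p} f$. Then $R_\alpha$ is bounded on $L^p(d\mu_\nu)$ if and only if the operator
$$
e^{(2/p-1)\nu\cdot x}\, T_\alpha\, e^{-(2/p-1)\nu\cdot x}
$$
is bounded on $L^p(dx)$. Formally this is the multiplier with symbol $m_\alpha(\xi + i(2/p-1)\nu)$, that is, $m_\alpha$ analytically continued into the complex strip $|\operatorname{Im}\zeta|\le |2/p-1|\,|\nu|$. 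The main obstacle is the zero set of $|\zeta|^2+|\nu|^2$ in the complexified variable. For real $\xi$ and $\eta=(2/p-1)\nu$, the quantity $|\xi+i\eta|^2 + |\nu|^2 = |\xi|^2-|\eta|^2+|\nu|^2+2i\xi\cdot\eta$ vanishes only if $\xi\cdot\nu=0$ and $|\xi|^2=|\nu|^2(1-(2/p-1)^2)-|\nu|^2$. The right side of the second condition is $<0$ whenever $p\ne 2$ with $1<p<\infty$, since $|2/p-1|<1$. So the shifted symbol has no singularity on $\R^n$.

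Next we would verify Mikhlin--H\"ormander estimates for the shifted symbol. It is smooth, and its denominator is bounded below by $c(1+|\xi|^2)$: the real part is $\ge |\xi|^2 + |\nu|^2(1-|2/p-1|^2)$ with $1-|2/p-1|^2>0$. Homogeneity of the numerator of degree $|\alpha|$ then gives $|\partial^\beta m| \lesssim |\xi|^{-|\beta|}$. Thus the Mikhlin theorem yields boundedness on $L^p(dx)$.

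To make the conjugation rigorous rather than formal, we would work at the kernel level. The kernel of $(-\Delta+|\nu|^2)^{-s}$ is a Bessel-type kernel decaying like $e^{-|\nu||x|}$. Its $\alpha$-derivatives are Calder\'on--Zygmund near the origin and decay like $e^{-|\nu||x|}$ at infinity. Multiplying by $e^{\pm(2/p-1)\nu\cdot x}$ preserves the local CZ structure. At infinity the product remains integrable because $|2/p-1|<1$, so the far part is bounded by Young's inequality and the local part by CZ theory. This kernel decomposition is the step I expect to require the most care, particularly near the endpoints $p\to1$ or $p\to\infty$, where the exponential gain degenerates. That degeneration is consistent with the theorem being stated only for $1<p<\infty$.
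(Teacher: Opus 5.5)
Your argument is correct, but it does not follow the paper, because the paper gives no proof of Theorem~\ref{thm:Lohoue-Mustapha-2004-result}. It quotes the result as the Euclidean case of Lohou\'e--Mustapha's theorem on sub-Laplacians with drift on amenable Lie groups. The same source supplies Theorem~\ref{thm:Lohoue-Mustapha-Riesz-Lp} on $\h_{n,m}$, which is the version the paper actually uses.

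Your proof is self-contained and Euclidean. It has two steps:
\begin{enumerate}[(i)]
\item the ground-state conjugation $U\Delta_\nu U^{-1}=-\Delta+|\nu|^2$, the analogue of the paper's identity $G_a=U_a(G+|a|^2I)U_a^{-1}$;
\item conjugation by $e^{(2/p-1)\nu\cdot x}$, which reduces the problem to a Fourier multiplier whose symbol is continued into the tube $|\mathrm{Im}\,\zeta|<|\nu|$, where $\zeta\cdot\zeta+|\nu|^2$ has positive real part.
\end{enumerate}
This gives explicit constants that are uniform in $\nu$: the symbols for $\nu$ and for $\nu/|\nu|$ differ only by a dilation in $\xi$. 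It also shows why $p=1$ is critical. There the weight $e^{\nu\cdot z}$ exactly cancels the $e^{-|\nu||z|}$ decay of the kernel along the direction of $\nu$.

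The same balance drives the paper's Lemma~\ref{prop:integrability_of_kernel}: the drift factor $e^{-t|\nu|^2}$ and Gaussian decay are played against an exponential weight. The paper uses only $|2/p-1|\le 1$, so that $p=1$ is included, and recovers the lost gain from the regularizing factor $e^{-\delta t}$. You use the strict inequality $|2/p-1|<1$ instead. What the cited route offers is generality. Your multiplier step is purely Euclidean, and only the kernel-level form of your argument could carry over to groups such as $\h_{n,m}$.

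There are three small corrections.
\begin{enumerate}[(i)]
\item In the zero-set computation, the vanishing conditions are $\xi\cdot\nu=0$ and $|\xi|^2=|\nu|^2\bigl((2/p-1)^2-1\bigr)$. The right side is negative for every $1<p<\infty$, including $p=2$. So your expression and the qualifier ``$p\neq 2$'' are off, but this is harmless given the correct real-part bound that follows.
\item The numerator $\prod_j(i\xi_j-\eta_j-\nu_j)^{\alpha_j}$ is a polynomial of degree $|\alpha|$, not a homogeneous one. The bounds $|\partial^\beta m_\alpha(\xi+i\eta)|\lesssim(1+|\xi|)^{-|\beta|}$ still hold, because the denominator stays in the right half-plane with modulus $\gtrsim 1+|\xi|^2$.
\item The step you flag as delicate is short. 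On $C_c^\infty$, identifying the conjugated operator with the shifted multiplier is a contour shift: Paley--Wiener for $\widehat{g}$, plus analyticity and boundedness of $m_\alpha$ on the tube. Alternatively, at kernel level, $(e^{c\cdot z}-1)K(z)\in L^1(\R^n)$ for $c=(2/p-1)\nu$. The factor $e^{c\cdot z}-1=O(|z|)$ removes the $|z|^{-n}$ singularity and kills any $\delta$ component, while $|c|<|\nu|$ takes care of infinity. Hence $e^{c\cdot x}T_\alpha e^{-c\cdot x}$ equals $T_\alpha$ plus convolution with an $L^1$ function, and the shifted Mikhlin check is not even needed.
\end{enumerate}
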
 

The endpoint case, that is, the boundedness properties for $p=1$, was first studied by  Li--Sj\"ogren--Wu \cite{Li-Sjogren-Wu-drift-Euclidean-Math-Z-2016} and was further strengthened by Li--Sj\"ogren \cite{Li-Sjogren-drift-sharp-endpoint-Euclidean-Canad-2021}. Let $R_D = D(\Delta_\nu)^{-k/2}$ be a $k^{th}$-order Riesz transform where $D$ is a homogeneous differential operator of degree $k \geq 1$ with constant coefficients. One can rewrite $D$ in terms of the derivatives in the directions of $\nu$ and its orthogonal complement. With that, Li--Sj\"ogren \cite[Theorem 1.1]{Li-Sjogren-drift-sharp-endpoint-Euclidean-Canad-2021} proved that $R_D$ is of weak-type $(1,1)$ with respect to $d\mu_\nu$ if and only if the maximum number of derivatives in the direction of $\nu$ are at most 2. Before moving on, let us mention that in recent years there have been some developments on related problems in other contexts. See, for example, \cite{Li-Sjogren-drift-real-hyperbolic-Potential-Anal-2017, Heisenberg_group_drift, Betancor-Laplacian-drift-Math-Nachr-2025, Garg-Riesz-transform-twisted-laplacian-JFAA-2025}. 

\medskip 
Coming back to the Grushin operator $G$, let us recall that $G$ is a second-order hypoelliptic partial differential operator defined by 
$$
G = -\sum_{j=1}^n X_j^2 - \sum_{j=1}^n \sum_{k=1}^m X_{j,k}^2,
$$
where, by writing points in $\R^{n+m}$ as $x = (x',x'')$ with $x'=(x_1',x_2',\ldots,x_n') \in \R^n$ and $x''=(x_1'',x_2'',\ldots,x_m'') \in \R^m$, the first order vector fields are given by 
\begin{equation} 
\label{def-first-order-grushin-gradients}
X_j = \frac{\partial}{\partial x'_j} \qquad \text{and} \qquad X_{j,k}= x'_j \, \frac{\partial}{\partial x''_k}, 
\end{equation} 
where $1 \leq j \leq n$ and $1 \leq k \leq m.$ 

\medskip 
Let us write $ X = (X' ,\, X'') = (X_1, \ldots, X_n, \,  X_{1,1}, \ldots, X_{n,m}).$ Then, given a non-zero vector $\nu = (a,b) \in \R^{n} \times \R^{nm}$, we define the Grushin operator with drift $G_{\nu}$ by 
\begin{align}
\label{def:Grushin_with_drift}
G_\nu = G - 2 \, a \cdot X' - 2 \, b \cdot X''.
\end{align} 

It turns out (see Proposition \ref{prop:auto-def-grushin-drifted}) that the Grushin operator with drift $G_\nu$ is symmetric with respect to a positive measure $\mu$ on $\R^{n+m}$ if and only if $b=0$ and upto a scalar multiple, the measure $\mu$ is given by $d\mu = d\mu_a = e^{2a\cdot x'} \, dx$, where $dx$ denote the Lebesgue measure on $\R^{n+m}$. Henceforth, we shall denote the Grushin operator with drift as 
$$ G_a = G - 2 \, a \cdot X' = G - 2 \, a \cdot \nabla_{x'} $$ 
with $a$ a non-zero vector in $\R^n$. It also turns out that the operator $G_a$ is essentially self-adjoint with respect to the measure $d\mu_a = \, e^{2a\cdot x'} \, dx$. 

\medskip 
In this article, we study the Riesz transforms of arbitrary order $k \geq 1$ associated with $G_a$ which we now define. Given a multi-index $\alpha \in \mathbb{N}^{n+nm}$ such that $|\alpha| = k \geq 1$, let us define the Riesz transform $R_{\alpha, a}$ of order $k$ by 
$$ R_{\alpha, a} = X^\alpha G_a^{-k/2}.$$ 

We have the following result concerning the $L^p$-boundedness of these Riesz transforms. 
\begin{theorem} \label{thm:p_bdd}
For any multi-index $\alpha$, such that $|\alpha|=k \geq 1$ and $1<p<\infty$, the Riesz transform $R_{\alpha,a}$ is bounded on $L^{p}(\R^{n+m}, d\mu_a),$ uniformly in $a$.   
\end{theorem}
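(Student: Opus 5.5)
Our plan is to conjugate $G_a$ by the multiplication operator $f \mapsto e^{a\cdot x'} f$. This turns the problem into one about a Schrödinger-type perturbation of $G$ on Lebesgue measure, and we then use the positivity of $|a|^2$ to control everything.

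Define $U f(x) = e^{a\cdot x'} f(x)$. This map $U : L^2(d\mu_a) \to L^2(dx)$ is unitary, and more generally $U_p f = e^{2a\cdot x'/p} f$ is an isometry from $L^p(d\mu_a)$ onto $L^p(dx)$. A direct computation, using that $X_j = \partial_{x_j'}$ and that $X_{j,k}$ commutes with $e^{a\cdot x'}$ (since $X_{j,k}$ only differentiates in $x''$), gives
$$ U G_a U^{-1} = G + |a|^2, \qquad U X_j U^{-1} = X_j - a_j, \qquad U X_{j,k} U^{-1} = X_{j,k}. $$
Consequently
$$ U R_{\alpha,a} U^{-1} = (X' - a)^{\alpha'} (X'')^{\alpha''}\, (G+|a|^2)^{-k/2}, $$
where the operators $X_j - a_j$ do not commute with the $X_{j,k}$. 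We expand this product into a sum of monomials in the $X$'s with polynomial coefficients in $a$. A typical term has the form $a^\beta X^{\gamma} (G+|a|^2)^{-k/2}$ with $|\beta| + |\gamma| = k$. We rewrite it as
$$ a^\beta (G+|a|^2)^{-|\beta|/2} \cdot \bigl[X^\gamma (G+|a|^2)^{-|\gamma|/2}\bigr], $$
using that the two functions of $G$ commute.

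This conjugation only handles $p=2$ exactly; for $p\neq 2$ the weight $e^{2a\cdot x'/p}$ differs from $e^{a\cdot x'}$. The cleaner route is to conjugate by $U_p$ for each fixed $p$. We then get $U_p G_a U_p^{-1} = G + 2(2/p-1)\,a\cdot\nabla_{x'} + c_p|a|^2$ with $c_p = (2/p)(2-2/p)-\ldots$, which is again a drifted operator and so does not simplify. We therefore do \emph{not} pursue that route. Instead we keep the $L^2$ conjugation and transfer to the Heisenberg--Reiter group, as in Robinson--Sikora. The operator $G_a$ lifts to $\mathcal{L} - 2a\cdot X'$ on the nilpotent group $N$ whose Lie algebra is spanned by $X_j$, $X_{j,k}$ and central $T_k$. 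Coifman--Weiss transference, which is compatible with the character $e^{2a\cdot x'}$ since this is the modular-type weight of a character of $N$, reduces the claim to $L^p(N, e^{2a\cdot x'}dg)$ bounds for the corresponding Riesz transforms on $N$.

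On $N$, which is amenable, the result of Lohou\'e--Mustapha (Theorem \ref{thm:Lohoue-Mustapha-2004-result} in its group form) covers a sub-Laplacian plus a drift in a horizontal direction determined by a character. It yields $L^p(e^{2a\cdot x'}dg)$-boundedness of $X^\alpha(\mathcal{L}_a)^{-k/2}$ for all $1<p<\infty$.

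To get uniformity in $a$, we use dilations. The group $N$ admits automorphic dilations $\delta_r$ with $\delta_r X_j = rX_j$ and $\delta_r X_{j,k} = rX_{j,k}$. Under $\delta_r$, the operator $\mathcal{L}_a$ goes to $r^2\mathcal{L}_{a/r}$ and the measure $e^{2a\cdot x'}dg$ to a multiple of $e^{2(a/r)\cdot x'}dg$. The Riesz transforms are invariant under this scaling, so the norm depends only on $a/|a| \in S^{n-1}$. Rotations in $x'$, acting compatibly in $x''$ via $X_{j,k}$, are automorphisms of $N$ that preserve $\mathcal{L}$, and they reduce further to $a = e_1$. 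Hence the norm is a single constant, independent of $a$.

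The main obstacle is the transference step. We must check that the Coifman--Weiss argument works with the non-unimodular-looking weighted measure, that is, that the weight $e^{2a\cdot x'}$ is multiplicative along the action of $N$ on $\R^{n+m}$ so that the transferred operator remains a convolution. We must also verify that the Lohou\'e--Mustapha hypotheses, namely a drift along a character that is positive-definite in the relevant sense, are met by $N$ with the horizontal character $x'\mapsto a\cdot x'$. If the direct citation is insufficient, the fallback is the $L^2$ conjugation above combined with heat kernel Gaussian bounds for $G+|a|^2$. These bounds give Calderón--Zygmund kernels for the terms in the expansion, whose local parts are handled by the doubling structure. The global parts carry the factor $e^{-|a|^2 t}$, which we use to absorb the exponential weights.
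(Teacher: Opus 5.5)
Your main route is the paper's route. You realise $G_a$ as $d\sigma$ of the drifted sub-Laplacian on $\h_{n,m}$, transfer with a Coifman--Weiss argument adapted to the character weight $e^{2a\cdot v}$, apply Lohou\'e--Mustapha on the group, and get uniformity in $a$ from dilations and rotations. The compatibility you worry about, $\int|\sigma_{g'}F|^p\,d\mu_{e_1}=e^{-2v_1'}\int|F|^p\,d\mu_{e_1}$, is exactly what drives the paper's weighted transference proposition. The opening $L^2$ conjugation plays no role here, and in it $X^\gamma(G+|a|^2)^{-|\gamma|/2}$ must stand to the \emph{left} of $(G+|a|^2)^{-|\beta|/2}$, since $X^\gamma$ does not commute with functions of $G$. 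The Calder\'on--Zygmund fallback is only a sketch and adds nothing.

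There is, however, a concrete gap in the transference step as you state it. The transference inequality $\|T\|\le\|S\|$ is proved only for $T=\int k(g)\,\sigma_{g^{-1}}\,dg$ with $k\in L^1(\h_{n,m},e^{2a\cdot v/p}\,dg)$: first for compactly supported $k$, using $V$ with $|VK^{-1}|/|V|\le 1+\epsilon$, then by density. The convolution kernel of $\tilde X^{\tilde\alpha}\tilde{\mathcal L}_{e_1}^{-k/2}$ is a singular kernel of size $\varrho(g)^{-Q}$ at the identity and is not integrable. So you cannot transfer the Riesz transform itself, and your assertion that transference ``reduces the claim to $L^p(N)$ bounds for the corresponding Riesz transforms'' is unjustified as written.

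The paper fills this in three steps, following Robinson--Sikora.
\begin{enumerate}[(i)]
\item It transfers the regularized operators $X^\alpha(\delta I+G_{e_1})^{-k/2}(I+\epsilon G_{e_1})^{-N}$ with $N\ge Q/2+1$. Their group kernel $K_{\epsilon,\delta}$ is shown to lie in $L^1(e^{2v_1/p}dg)$ via the Gaussian bounds for $\tilde X^\gamma p_t$. The factor $(I+\epsilon\,\cdot)^{-N}$ handles the identity. At infinity, the $e^{-t}$ coming from the drift together with $e^{-\delta t}$ beats the weight $e^{(2/p-1)v_1}\le e^{\varrho(g)}$.
\item It checks that the regularized group transforms are bounded by $C_p\|\tilde R_{\tilde\alpha,e_1}\|$, uniformly in $\epsilon,\delta$.
\item It removes the regularization. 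The bound is applied to $(I+\epsilon G_{e_1})^N(\delta I+G_{e_1})^{k/2}f$, then $\epsilon\to 0$. Finally $\delta\to0$, using $\|(\delta I+G_{e_1})^\gamma-G_{e_1}^\gamma\|_{L^p(d\mu_{e_1})\to L^p(d\mu_{e_1})}\le\delta^\gamma$ for $0<\gamma<1$, which comes from the subordination formula and contractivity of $e^{-tG_{e_1}}$.
\end{enumerate}
This regularize--transfer--remove scheme is what your proposal needs to become a proof.
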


And, for the endpoint case $p=1$, we have the following two results.
\begin{theorem} \label{thm:1_bdd-positive}
On $\R^{n+1}$, that is, for $m=1$, the first order Riesz transforms are of weak-type $(1,1)$ with respect to $d\mu_a$, uniformly in $a$.
\end{theorem}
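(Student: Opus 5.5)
Theorem~\ref{thm:1_bdd-positive} concerns the first order Riesz transforms $X_j G_a^{-1/2}$ and $X_{j,1} G_a^{-1/2}$ on $\R^{n+1}$. The plan is to conjugate away the drift, reduce to the unweighted Grushin operator with a spectral shift, and then split each kernel into a local and a global part.

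\emph{Step 1: conjugation.} Let $U f(x) = e^{a\cdot x'} f(x)$. This is an isometry from $L^2(d\mu_a)$ onto $L^2(dx)$. A direct computation gives
\[
U G_a U^{-1} = G + |a|^2, \qquad U X_j U^{-1} = X_j - a_j, \qquad U X_{j,1} U^{-1} = X_{j,1},
\]
the last because $X_{j,1}$ differentiates only in $x''$. Hence
\[
U R_{\alpha,a} U^{-1} = \tilde X^\alpha (G+|a|^2)^{-1/2},
\]
with $\tilde X\in\{X_j-a_j,\,X_{j,1}\}$. Writing $(G+|a|^2)^{-1/2}=\pi^{-1/2}\int_0^\infty e^{-t|a|^2}e^{-tG}t^{-1/2}\,dt$, the kernel of $R_{\alpha,a}$ relative to $d\mu_a(y)$ is
\[
K_a(x,y)=e^{-a\cdot x'}\,\tilde K(x,y)\,e^{-a\cdot y'},
\]
where $\tilde K$ is the kernel of $\tilde X^\alpha (G+|a|^2)^{-1/2}$ with respect to $dy$. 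The heat kernel of $G$ obeys Gaussian bounds $p_t(x,y)\lesssim |B(x,\sqrt t)|^{-1}e^{-c\tilde d(x,y)^2/t}$, with matching bounds for $X$-derivatives (Robinson--Sikora). Using $X_j-a_j$ then gives
\[
|\tilde K(x,y)|\lesssim \int_0^\infty e^{-t|a|^2}\bigl(t^{-1/2}+|a|\bigr)\frac{e^{-c\tilde d^2/t}}{|B(x,\sqrt t)|}\,\frac{dt}{\sqrt t}.
\]
By scaling the Grushin structure we may normalise $|a|=1$; this is what yields uniformity in $a$.

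\emph{Step 2: local part.} Let $L=\{(x,y): \tilde d(x,y)\le 1\}$. On $L$ the density $e^{2a\cdot y'}$ is essentially constant on each ball of radius $1$, since $|x'-y'|\le\tilde d(x,y)\le1$. So $d\mu_a$ is locally doubling there and comparable to Lebesgue measure times a constant. Cover $\R^{n+1}$ by a bounded-overlap family of unit balls. On each ball, $K_a\mathbbm 1_L$ agrees, up to the constant $e^{2a\cdot x'}$, with a local piece of the Grushin Riesz kernel plus an integrable error coming from the term $a_j e^{-t}$ and the factor $e^{-t}$. Robinson--Sikora weak $(1,1)$ for the Grushin Riesz transforms, together with Calder\'on--Zygmund estimates for the truncated kernel, then gives weak $(1,1)$ of the local part with respect to $d\mu_a$.

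\emph{Step 3: global part, the main obstacle.} For $\tilde d(x,y)>1$ I would bound the kernel by a positive operator. A Laplace-method evaluation of the $t$-integral gives roughly
\[
|\tilde K(x,y)|\lesssim \frac{e^{-c'\tilde d(x,y)}}{|B(x,\tilde d)|}\times(\text{polynomial factors}).
\]
The key difficulty is that $c'$ must beat the exponential growth of $e^{2a\cdot y'}$. In the Euclidean case of Li--Sj\"ogren--Wu, the sharp Gaussian $e^{-|x-y|^2/4t-t}$ integrates to $e^{-|x-y|}$, which matches $e^{-a\cdot(x'+y')}$ exactly in the $x'$-direction. This is precisely why only first order (at most two $\nu$-derivatives) works.

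So in the Grushin setting I need the exact exponent in the $x'$-variables. For $m=1$ I would exploit the explicit structure: take the Fourier transform in $x''$ to obtain $-\Delta_{x'}+\lambda^2|x'|^2$, and use Mehler's formula for the scaled Hermite semigroup. This yields the heat kernel of $G$ as an explicit integral, giving a sharp exponent $|x'-y'|^2/4t$ plus nonnegative additional terms. The global part then reduces to the Euclidean drift Laplacian estimates in $x'$, tensored with an integrable decay in $x''$. At that point I would follow the Li--Sj\"ogren--Wu strategy: split the region according to whether $x'-y'$ is roughly parallel to $a$, and prove weak $(1,1)$ of the resulting positive kernel operator through level-set geometry. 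The region where $y'$ lies in a paraboloid around $x'-\mathbb R_+a$ carries the critical mass, where the first order derivative gives exactly the extra $\tilde d^{-1/2}$ needed.

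I expect this sharp-constant extraction from Mehler's formula, uniformly in the $x''$-frequency, to be the hardest step.
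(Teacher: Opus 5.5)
\textbf{There is a genuine gap: Step~3, the global part, is a programme and not a proof.} Steps 1 and 2 are reasonable. The conjugation is the paper's \eqref{eq:Relation_between_Grushin_and_drift_Grushin}, and the local part is standard because $e^{2a\cdot y'}\sim e^{2a\cdot x'}$ when $\tilde d(x,y)\le 1$. But the global part is the whole content of the theorem, and the two steps you sketch for it would not go through as stated.

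First, Mehler's formula gives the sharp factor $e^{-|x'-y'|^2/4t}$ only fibrewise in the frequency $\lambda$ dual to $x''$. Since $k_{t,\lambda}>0$, putting absolute values inside $H_t(x,y)=(2\pi)^{-m}\int k_{t,\lambda}(x',y')\,e^{-i\lambda\cdot(x''-y'')}\,d\lambda$ destroys all decay in $x''-y''$. To get $x''$-decay while keeping the exact constant $1/4$, you must exploit the oscillation in $\lambda$ (contour shifts, i.e.\ genuinely sharp heat-kernel asymptotics). You do not supply this.

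Second, even granting such bounds, the reduction to ``Euclidean drift estimates in $x'$ tensored with an integrable decay in $x''$'' is not justified. In the critical region the $x''$-extent of the kernel is of order $|x'|+|y'|$; compare \eqref{Grushin_distance} and the factor $(r+|x'|)^m$ in Lemma~\ref{lem:ball-vol-est}. So the kernel is not of product form. For a product kernel $A(x',y')B(x''-y'')$, Fubini would let you tensor a weak $(1,1)$ bound in $x'$. Here it does not, and at the $L^1$ endpoint you cannot dominate the $x''$-part by a maximal function. The Li--Sj\"ogren--Wu level-set analysis would have to be redone in the full Grushin geometry. A telling symptom is that nothing in your argument uses $m=1$.

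\textbf{The missing idea is transference.} The hard analysis has already been done on the Heisenberg group $\h_{n,1}$ (Theorem~\ref{thm:LS-Riesz-Heisenberg}), and the paper transfers it as follows.
\begin{enumerate}[(i)]
\item Since $G_a=d\sigma(\tilde{\mathcal L}_{(a,0)})$, the regularized operator $X^\alpha(\delta I+G_{e_1})^{-1/2}(I+\epsilon G_{e_1})^{-N}$ equals $\int K_{\epsilon,\delta}(g)\,\sigma_{g^{-1}}\,dg$ (Proposition~\ref{prop:reg-Riesz-intergral-form}).
\item The same kernel $K_{\epsilon,\delta}$, which lies in $L^1(e^{2v_1}dg)$ by Lemma~\ref{prop:integrability_of_kernel}, represents the regularized group Riesz transform as a right convolution.
\item The identity $\|\sigma_g F\|_{L^1(d\mu_{e_1})}=e^{-2v_1}\|F\|_{L^1(d\mu_{e_1})}$ exactly matches the density of $d\mu'_{e_1}$. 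So a Coifman--Weiss averaging over sets $V$ with $|VK^{-1}|/|V|\le 1+\epsilon$ bounds the weak $(1,1)$ norm on $\R^{n+1}$ by that on $\h_{n,1}$ (Proposition~\ref{prop:transference}).
\item That group norm is controlled, uniformly in $\epsilon,\delta$, by the Li--Sj\"ogren bound (Proposition~\ref{prop:Relation_between_two_riesz_gp_transforms}).
\item One then lets $\epsilon,\delta\to 0$ and uses dilations and rotations to get uniformity in $a$.
\end{enumerate}
No sharp heat-kernel estimate for $G$ is ever needed. The restriction $m=1$ is exactly the restriction of the available group result.
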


\begin{theorem} \label{thm:1_bdd-negative}
For any $k \geq 3$, not all the Riesz transforms of order $k$ are of weak-type $(1,1)$ with respect to $d\mu_a$. 
\end{theorem}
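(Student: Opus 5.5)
Write $G_a = G - 2a\cdot\nabla_{x'}$ and aim to reduce Theorem \ref{thm:1_bdd-negative} to the Euclidean negative result of Li--Sj\"ogren. The plan is to find one Riesz transform of order $k\ge 3$ built only from the derivatives $X_j=\partial_{x_j'}$, so that it sees only the $x'$-variables. Take $\alpha$ with $X^\alpha = X_1^k$ when $a_1\neq 0$; in general use the direction-of-$a$ derivative $(a\cdot\nabla_{x'})^k/|a|^k$, written as a linear combination of the $X^\alpha$ with $\alpha$ supported on the $X'$ part. If each $X^\alpha G_a^{-k/2}$ were weak-type $(1,1)$, so would this combination be. So it suffices to show that $(a\cdot \nabla_{x'})^k G_a^{-k/2}$ fails weak-type $(1,1)$ on $L^1(d\mu_a)$.

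The first step is to conjugate away the drift. Let $U f = e^{a\cdot x'} f$, which is unitary from $L^2(d\mu_a)$ onto $L^2(dx)$. A direct computation gives
\[
U G_a U^{-1} = G + |a|^2 ,
\]
since the cross terms from the $X_{j,k}$ vanish; those vector fields do not involve $x'$-derivatives. The heat kernel is then $e^{-t|a|^2}$ times the Grushin heat kernel, conjugated by the weights. Taking the partial Fourier transform in $x''$ (for any $m$) turns $G$ into the family $-\Delta_{x'} + |\lambda|^2|x'|^2$ of scaled Hermite operators. The resulting kernel of $(a\cdot\nabla_{x'})^k G_a^{-k/2}$ is explicit through Mehler's formula.

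The second step is the counterexample itself. Following the Li--Sj\"ogren mechanism, test on $f$ an approximate identity concentrated near a point $y$, that is, $f = \chi_{B(y,\varepsilon)}/\mu_a(B(y,\varepsilon))$. Then $R f(x)\approx K(x,y)$ for $x$ far away. Choose $y=0$ and look at $x$ with $x'= s\,a/|a|$, $s\to+\infty$, in the exponentially growing direction, with $x''$ in a bounded region. In that region, compute the large-time asymptotics of the kernel with respect to $d\mu_a$: with $t\sim s/|a|$, the Gaussian $e^{-|x'-y'|^2/4t}e^{-t|a|^2}$ balances the weight $e^{-a\cdot x'}$, and each derivative in direction $a$ contributes a factor $\approx -|a|$ up to lower-order terms. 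Since $G_a^{-k/2}=\Gamma(k/2)^{-1}\int_0^\infty t^{k/2-1}e^{-tG_a}\,dt$, the $t$-integral behaves like $s^{k/2-1}\cdot s^{-1/2}\cdot(\text{volume factor})\cdot e^{-2|a|s}$ relative to $d\mu_a$. For $k\ge3$ the power of $s$ is large enough that
\[
\mu_a\{x : |Rf(x)|>\lambda\}
\]
grows faster than $C/\lambda$ as $\lambda\to0$. Concretely, the level set contains the region $a\cdot x' \lesssim \frac{1}{2}\log(1/\lambda)+c_k\log\log(1/\lambda)$, whose $\mu_a$ measure is of size $\lambda^{-1}(\log 1/\lambda)^{\gamma}$ with $\gamma>0$ exactly when $k\ge3$. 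This mirrors the Euclidean computation of Li--Sj\"ogren, where the threshold is more than two derivatives along $\nu$.

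The main obstacle is controlling the Grushin heat kernel uniformly in the large-time, far-off-diagonal regime in the $x'$ direction, while the $x''$-variable is integrated. The plan is to use the Mehler formula with $\lambda$ small, since large $t$ is dominated by $|\lambda|\lesssim t^{-1}$. In that regime the Hermite kernel is close to the Euclidean heat kernel in $x'$ times a factor $t^{-m}$ from the $x''$ part. A dominated-convergence argument should then show that the leading term matches the Euclidean one up to the extra polynomial factor $t^{-m}$ (at bounded $x''$). The remaining point to verify is that this factor does not spoil the exponent count. If it does, the fallback is to choose the level set with $x''$ spread over a ball of radius $\sim s$, so that $\mu_a$ measure recovers the lost power $s^{m}$.
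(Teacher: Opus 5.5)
Your overall strategy differs from the paper's but is viable. You construct a direct counterexample from kernel asymptotics, where the paper uses a blow-up argument. The reduction to $(a\cdot\nabla_{x'})^k G_a^{-k/2}$ and the conjugation $UG_aU^{-1}=G+|a|^2$ are fine. The gap is in the step the threshold $k\ge 3$ depends on: the $x''$-asymptotics.

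Take $|a|=1$, $x'=se_1+x'_\perp$ and $t\approx s/2$. The Mehler exponent expands as
\[
\frac{|\lambda|}{2}\coth(2|\lambda|t)\,|x'|^2=\frac{|x'|^2}{4t}+\frac{|\lambda|^2\,t\,|x'|^2}{3}+\cdots .
\]
At $|\lambda|\sim t^{-1}$ the correction is of size $s$ in the exponent, so the Hermite kernel is not close to the Euclidean one there. The effective range is $|\lambda|\lesssim (t|x'|^2)^{-1/2}\sim s^{-3/2}$. The $\lambda$-integral then gives the factor
\[
(t|x'|^2)^{-m/2}\, e^{-3|x''|^2/(4t|x'|^2)}\sim s^{-3m/2},
\]
with $x''$-spread $|x''|\lesssim |x'|\sqrt t\sim s^{3/2}$. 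You claimed a factor $t^{-m}$ and spread $s$; that is the scaling at $x'=0$, not the local scaling $(r+|x'|)^m$ from $|B(x,r)|\sim r^{n+m}(r+|x'|)^m$.

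Redo the count. With respect to $\mu_a$, the kernel is $\approx c\,s^{(k-1-n-3m)/2}e^{-2s}$.
\begin{itemize}
\item Restricting to bounded $x''$, as in your main plan, gives $\lambda\,\mu_a\{|Rf|>\lambda\}\gtrsim(\log 1/\lambda)^{(k-2-3m)/2}$.
\item Your fallback with $|x''|\lesssim s$ gives the exponent $(k-2-m)/2$.
\end{itemize}
Neither shows failure for $k=3$, for any $m\ge 1$. Only integrating over the full region $|x'_\perp|\lesssim\sqrt{s}$, $|x''|\lesssim s^{3/2}$ recovers the cross-section factor $s^{(n-1+3m)/2}$. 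That yields the exponent $(k-2)/2$, positive exactly when $k\ge 3$. With this correction your route goes through. You would still need to verify uniformity of the Laplace-method asymptotics over that region, and that $K(x,y)=K(x,0)(1+O(\varepsilon))$ for $|y|<\varepsilon$.

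For comparison, the paper avoids all such asymptotics.
\begin{itemize}
\item It translates to a point $\xi$ with $|\xi'|=1$, where $G$ is locally elliptic, and uses the non-isotropic dilation identity $R_{ke_1,e_1/R}=\delta_R^{-1}R_{ke_1,e_1}\delta_R$.
\item Combined with the Euclidean translation $U$ and the isotropic dilation $\Lambda_R$, these conjugations preserve the weak-type $(1,1)$ bound with respect to $\mu_{e_1}$.
\item As $R\to 0$, the conjugated operators converge pointwise to $\partial_{x_1'}^k\Delta_{e_1}^{-k/2}$ on $\R^{n+m}$.
\item Fatou's lemma then transfers any weak-type bound to this Euclidean operator, contradicting Li--Sj\"ogren.
\end{itemize}
That argument is soft and uses the Euclidean theorem as a black box. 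Your corrected argument would be self-contained and quantitative, giving the rate $(\log 1/\lambda)^{(k-2)/2}$, at the cost of a careful uniform analysis of the Mehler-formula kernel.
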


\medskip 
\textbf{Organization of the paper:} 
In Section \ref{sec:prelim}, we start with recalling relevant basics on the Grushin operator, the sub-Laplacian on some Heisenberg-Reiter groups, and some facts on how they are related. After that, we discuss the Grushin operator with drift, the Riesz transforms associated with it, and also the asymptotics of the corresponding ball volume in Subsection \ref{subsec:Grushin-operator-with-drift}. 
In Subsection \ref{subsec:rotation-dilation}, we shall explain that by making use of dilation and rotation arguments, it suffices to study the problems for the unit vector $e_1$. 

\medskip 
We shall prove Theorems \ref{thm:p_bdd} and \ref{thm:1_bdd-positive} in Section \ref{sec:Reg-Riesz-transforms-transference-principle}. The main idea of proofs of these results is based on the Coifmann-Weiss transference technique \cite{Transference_methods_in_analysis} and follows closely the approach developed in \cite{Riesz_transform_Robinson}. Doing so, we shall deduce our results for the Grushin operator with drift from the known results (Theorems \ref{thm:Lohoue-Mustapha-Riesz-Lp} and \ref{thm:LS-Riesz-Heisenberg}) for the sub-Laplacian with drift on the Heisenberg-Reiter groups. 

\medskip 
We shall show in Proposition 
\ref{prop:pointwise-convergence} that a Riesz transform for Grushin operator with (scaled) drift, when conjugated by suitable Euclidean translation and dilations, converges pointwise to the Riesz transform for the Laplacian with drift on $\R^{n+m}$. With this one can easily show that the boundedness of the Riesz transform for Grushin operator with drift implies the boundedness of the Riesz transform for the Laplacian with drift on $\R^{n+m}$, and then Theorem \ref{thm:1_bdd-negative} holds true via Theorem 1.1 of \cite{Li-Sjogren-drift-sharp-endpoint-Euclidean-Canad-2021}. 


\section{Preliminaries and Basic results} 
\label{sec:prelim}

\subsection{Grushin operator} \label{prelim:grushin-op}
Let us denote arbitrary $x \in \R^{n+m}$ by $x = (x',x'')$, where $x'=(x_1',x_2',\ldots,x_n') \in \R^n$ and $x''=(x_1'',x_2'',\ldots,x_m'') \in \R^m$. With this, the Grushin operator $G$ on $\R^{n+m}$ is given by 
\begin{equation}
G = -\Delta_{x'} - |x'|^2 \Delta_{x''} = -\sum_{j=1}^n \frac{\partial^2}{\partial x^{'2}_j} - |x'|^2 \sum_{k=1}^m \frac{\partial^2}{\partial x^{''2}_k}.
\end{equation}
Clearly, the operator $G$ is the negative of the sum of squares of the following vector fields: 
$$
X_j = \frac{\partial}{\partial x'_j} \qquad \text{and} \qquad X_{j,k}= x'_j \, \frac{\partial}{\partial x''_k}, \qquad 1 \leq j \leq n, \, \, 1 \leq k\leq m.
$$
For future purposes, let us write 
\begin{equation} 
\label{def:grushin-vector-fields}
X = (X', \, X'') = (X_1,\ldots, X_n, \, X_{1,1}, \ldots, X_{n,m}). 
\end{equation}

One can easily check that the hypoelliptic operator $G$ is homogeneous of degree 2 with respect to the family of non-isotropic dilations $\delta_r(x',x'') = (r x',r^2 x''), \, r>0$. 

\medskip The control distance $\tilde{d}(x,y)$ of the Grushin operator has the following asymptotic behavior (see \cite[Proposition 5.1]{Analysis-degenerate-elliptic-opertators-Robinson-Sikora-Math-Z-2008}): 
\begin{equation} 
\label{Grushin_distance}
\tilde{d}(x,y) \sim  d(x,y) := |x'-y'| + \left\{
\begin{array}{ll}
\frac{|x''-y''|}{|x'|+|y'|}, & \mbox{if } |x''-y''|^{1/2} \leq |x'|+|y'| \\
|x''-y''|^{1/2},& \mbox{if } |x''-y''|^{1/2} \geq |x'|+|y'|.
\end{array} \right.    
\end{equation}
From now onwards, we shall call $d(x,y)$ the Grushin metric. Let $B(x,r)$ denote the ball with center $x$ and radius $r$, that is, $B(x,r) = \, \{y \in \R^{n+m}: d(x,y)<r\}$. The Lebesgue measure of $B(x,r)$ is known to have the following asymptotics (again, see \cite[Proposition 5.1]{Analysis-degenerate-elliptic-opertators-Robinson-Sikora-Math-Z-2008}): 
\begin{equation} 
\label{eq:ball_volume}
|B(x,r)| \sim r^{n+m} \, \max \{r, \, |x'|\}^m \sim r^{n+m}(r+|x'|)^m. 
\end{equation}
As an immediate consequence of \eqref{eq:ball_volume}, we get that $( \R^{n+m}, \, \tilde{d}, \, dx )$ is a doubling metric measure space with homogeneous dimension $Q:=n+2m$. 

\medskip 
Grushin operator is intimately related to the scaled Hermite operators $H(\lambda) = -\Delta_{x'}+|\lambda|^2|x'|^2$ on $\R^n$ in the following manner. For Schwartz class functions $f$ on $\R^{n+m},$ one can apply the Euclidean Fourier inversion formula in $x''$-variable to see that 
\begin{equation} 
\label{def:Grushin_Hermite}
Gf(x)= (2\pi)^{-m} \int_{\R^m} e^{-i\lambda\cdot x''} H(\lambda)f^{\lambda}(x') \, d\lambda, 
\end{equation}
where $\displaystyle f^\lambda(x') = \int_{\R^m} f(x',x'') e^{i\lambda\cdot x''}\, dx''$. 

\medskip 
The operator $G$ generates a symmetric diffusion semigroup (heat semigroup) $(e^{-tG})_{t > 0}$ on $L^2(\R^{n+m}),$ and using relation \eqref{def:Grushin_Hermite}, one can express its heat kernel in terms of the heat kernel $k_{t,\lambda}(x,y)$ of the operator $H(\lambda)$ on $\R^n$. More precisely, 
\begin{equation} \label{eq:heat_kernel_in_hermite_form}
H_t(x,y) = (2\pi)^{-m} \int_{\R^m} k_{t,\lambda}(x',y') \, e^{-i\lambda\cdot (x''-y'')} \, d\lambda, 
\end{equation}
with the kernel $k_{t,\lambda}(x',y')$ given by 
\begin{equation} \label{eq:heat_kernel_hermite}
\left( \frac{|\lambda|}{2\pi \sinh{(2|\lambda|t})}\right)^{n/2} \exp{ \left( -\frac{|\lambda|}{2} \coth{(2|\lambda|t)} \, |x'-y'|^2 - |\lambda| \tanh (|\lambda|t) \, x'\cdot y' \right) }.  
\end{equation}
 
It is well known that the heat kernel of the Grushin operator satisfies the Gaussian upper bounds, that is, there exist constants $b, C>0$ such that 
$$
|H_t(x,y)| \leq C |B(x,\sqrt{t})|^{-1} \exp{(-b \, d(x,y)^2/t)}.
$$
For more details on the above estimate, we refer to Corollary 6.6 of \cite{Analysis-degenerate-elliptic-opertators-Robinson-Sikora-Math-Z-2008}. 


\subsection{The Heisenberg-Reiter group} \label{sbsec:Heisenberg_Reiter_Group}
The Grushin operator has a close relation with the sub-Laplacian of certain Heisenberg-Reiter group as we shall see in the next subsection. 

\medskip 
We briefly recall some of facts concerning the specific Heisenberg-Reiter groups that are relevant in the study of the Grushin operators, and for details, we refer to \cite{Martini-Sikora-sharp-multiplier-Grushin-MRL-2012, Heisenberg_Reiter_Torres} and the references therein. 

\medskip 
Let $\R^{n\times m}$ denote the set of all $n \times m$ matrices with real entries. Let $\h_{n,m}$ be the semi-direct product $\R^{n\times m} \rtimes (\R^n \times \R^m)$, with the group law
$$
(u,v,s) \cdot (u',v',s') = (u+ u',v+ v',s+ s'+ (u'\,^T v - u^T v')/2).
$$
The group $\h_{n,m}$ is an example of Heisenberg-Reiter groups. 

\medskip 
Let $\{\tilde{X}_{1,1}, \ldots,\tilde{X}_{n,m}, \, \tilde{Y_1},\ldots,\tilde{Y_n}, \, \tilde{T_1},\ldots,\tilde{T}_{m}\}$ be the standard basis of the Lie algebra of $\h_{n,m}$. With this set of left-invariant vector fields on $\h_{n,m}$, the homogeneous sub-Laplacian $\mathcal{\tilde{L}}$ on $\h_{n,m}$ is given by 
\begin{equation} \label{eq:sub-Laplacian}
\mathcal{\tilde{L}} = -\sum_{j=1}^n\sum_{k=1}^m \tilde{X}_{j,k}^2 -\sum_{j=1}^n \tilde{Y}_{j}^2.
\end{equation}
The system 
$$ \tilde{X} = (\tilde{X}_{1,1}, \ldots,\tilde{X}_{n,m}, \, \tilde{Y_1},\ldots,\tilde{Y_n}) $$ 
satisfies H\"ormander's condition. Thus, if we denote by $\varrho$ the Carnot-Carath\'eodory distance on $\h_{n,m}$, and write $\varrho(g) = \varrho(g,e)$, where $e$ is identity element of $\h_{n,m}$, then we have that the kernel $p_t$ of the heat semigroup $(e^{-t\mathcal{\tilde{L}}})_{t > 0}$ along with its gradients satisfy the following Gaussian upper bounds (see Theorem IV.4.2 in \cite{Analysis_and_geometry_on_groups}): for any multi-index $\gamma$ and $\kappa > 0$, there exists a constant $C = C_{\gamma, \kappa}> 0$, such that for all $g \in \h_{n,m}$ and $t>0$, 
\begin{equation} 
\label{est:gradient-group-heat-kernel-bounds}
|\tilde{X}^\gamma p_t(g)| \leq C \, t^{-|\gamma|/2} \, V(\sqrt{t})^{-1} \, e^{-\frac{\varrho(g)^2}{(4+\kappa)t}}. 
\end{equation}
Here $V(\sqrt{t})$ denotes the volume of balls of radius $\sqrt{t}$. 

\medskip 
In \cite{Hebisch-Mauceri-Meda-spectral-multipliers-drift-Lie-group-Math-Z-2005}, Hebisch et al studied spectral multipliers for sub-Laplacians with drift on connected Lie groups. In particular, given $\nu = (a,b) \in \R^{n} \times \R^{nm}$, the sub-Laplacian with drift $\mathcal{\tilde{L}_\nu}$ on the group $\h_{n,m}$ is defined as 
\begin{equation} \label{def:sub-Laplacian_with_drift}
\mathcal{\tilde{L}_\nu} = \mathcal{\tilde{L}} - 2 \sum_{j=1}^n\sum_{k=1}^m b_{j,k} \tilde{X}_{j,k} -2 \sum_{j=1}^n a_j\tilde{Y}_{j}.    
\end{equation}
It was shown in \cite{Hebisch-Mauceri-Meda-spectral-multipliers-drift-Lie-group-Math-Z-2005} that the operator $\mathcal{\tilde{L}_\nu}$ is positive definite and essentially self-adjoint on $L^2(\h_{n,m}, \, d\mu_\nu')$, where $d\mu_\nu'(u,v,s)= e^{2(b,a)\cdot (u,v)} \, du \, dv \, ds$. 

\medskip 
As also pointed out by Hebisch et al \cite[p. 906]{Hebisch-Mauceri-Meda-spectral-multipliers-drift-Lie-group-Math-Z-2005}, the operator $\mathcal{\tilde{L}_\nu}$ 
generates a convolution semigroup of probability measures on the group $\h_{n,m}.$ Hence, it follows from Hunt's theorem \cite{Hunt-Semi-groups-measures-Lie-groups-TAMS-1956} that $\mathcal{\tilde{L}_\nu}$ generates a symmetric diffusion semigroup $(e^{-t\mathcal{\tilde{L}_\nu}})_{t>0}$ on $(\h_{n,m},d\mu_\nu').$ 
Let $p_{t, \nu}$ denote the integral kernel of the semigroup $(e^{-t \mathcal{\tilde{L}_\nu}})_{t>0}$, in the sense that 
\begin{equation*}
e^{-t\mathcal{\tilde{L}_\nu}} f(g) = \int_{\h_{n,m}}  p_{t, \nu}(g, g') \, f(g') \, d\mu_{\nu}'(g').   
\end{equation*}
Using Equation (3.6) of \cite{Hebisch-Mauceri-Meda-spectral-multipliers-drift-Lie-group-Math-Z-2005}, it is easy to verify that 
$$p_{t, \nu}(g, g') = e^{-t|\nu|^2} \, e^{-(b,a) \cdot (u,v)} \, e^{-(b,a) \cdot (u',v')} \, p_{t}(g'^{-1} g),
$$
where $g = (u,v,s), \, g' = (u',v',s') \in \h_{n,m}$. 

\medskip 
Now, for any multi-index $\gamma$ such that $|\gamma|=k \geq 1$, one can define the Riesz transform $\tilde{R}_{\gamma, \nu}$ via functional calculus as follows: 
$$ \tilde{R}_{\gamma, \nu} = \tilde{X}^\gamma (\mathcal{\tilde{L}}_{\nu})^{-k/2} = \frac{1}{\Gamma(k/2)} \int_{0}^{\infty} t^{\frac{k}{2}-1} \, \tilde{X}^\gamma e^{-t \, \mathcal{\tilde{L}_\nu}} \, dt. $$ 
The following result follows from \cite[Theorem 2]{Lohoue-Mustapha-drift-2004}.
\begin{theorem}[Lohou\'{e}--Mustapha \cite{Lohoue-Mustapha-drift-2004}] \label{thm:Lohoue-Mustapha-Riesz-Lp}
For any $1 < p < \infty$ and the multi-index $\gamma$ such that $|\gamma|=k \geq 1$, the Riesz transform $\tilde{R}_{\gamma,\nu} = \tilde{X}^\gamma (\mathcal{\tilde{L}}_{\nu})^{-k/2}$ is bounded on $L^p(\h_{n,m}, \, d \mu_\nu')$. 
\end{theorem}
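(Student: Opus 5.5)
The plan is to obtain the statement as a special case of the general result of Lohou\'{e}--Mustapha \cite[Theorem 2]{Lohoue-Mustapha-drift-2004}. That theorem concerns a connected amenable Lie group $\mathbb{G}$ with Haar measure, a H\"ormander system of left-invariant vector fields $\tilde{X}_1,\dots,\tilde{X}_N$, and a positive character $\chi$ of $\mathbb{G}$. For these data it gives $L^p(\mathbb{G},\chi^2\,dg)$-boundedness, $1<p<\infty$, of the Riesz transforms of all orders built from the operator $-\sum_i \tilde{X}_i^2-2\sum_i c_i\tilde{X}_i$, where $c_i=\tilde{X}_i\chi(e)$. So the whole task is to check that $\h_{n,m}$, the system $\tilde{X}$, the operator $\mathcal{\tilde{L}_\nu}$ and the measure $d\mu_\nu'$ fit exactly into this framework.

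\emph{Step 1: the group.} $\h_{n,m}$ is a connected, simply connected, two-step nilpotent Lie group. Hence it is amenable and unimodular, and its Haar measure is Lebesgue measure $du\,dv\,ds$. The system $\tilde{X}$ satisfies H\"ormander's condition, as recalled above.

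\emph{Step 2: the character.} Define $\chi(u,v,s)=e^{(b,a)\cdot(u,v)}$. The group law adds the $u$- and $v$-coordinates, with the only correction appearing in $s$, and $\chi$ does not depend on $s$. Therefore $\chi$ is a continuous homomorphism into $(0,\infty)$, and $d\mu_\nu'=\chi^2\,du\,dv\,ds$.

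Next, each left-invariant field $\tilde{X}_{j,k}$ (respectively $\tilde{Y}_j$) has the form $\partial_{u_{j,k}}$ (respectively $\partial_{v_j}$) plus a linear-coefficient multiple of the $\partial_{s}$'s. Consequently $\tilde{X}_{j,k}\chi=b_{j,k}\chi$ and $\tilde{Y}_j\chi=a_j\chi$. Thus the drift in \eqref{def:sub-Laplacian_with_drift} is precisely $2\sum_i (\tilde{X}_i\chi)(e)\,\tilde{X}_i$, which is the drift attached to $\chi$ in \cite{Lohoue-Mustapha-drift-2004}.

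\emph{Step 3: consistency check.} As a sanity check, I would redo the conjugation computation. Put $f=\chi^{-1}h$ and let $c$ be the relevant constant. Then $\tilde{X}f=\chi^{-1}(\tilde{X}-c)h$, and
\[
-(\tilde{X}-c)^2-2c(\tilde{X}-c)=-\tilde{X}^2+c^2 .
\]
Hence $\chi\,\mathcal{\tilde{L}_\nu}\,\chi^{-1}=\mathcal{\tilde{L}}+|\nu|^2$ on $L^2$ of Haar measure. This confirms both the symmetry of $\mathcal{\tilde{L}_\nu}$ on $L^2(d\mu_\nu')$ and the heat kernel formula for $p_{t,\nu}$ quoted from \cite{Hebisch-Mauceri-Meda-spectral-multipliers-drift-Lie-group-Math-Z-2005}. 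It also shows that $\tilde{R}_{\gamma,\nu}$, defined by the subordination integral above, coincides with the operator $\tilde{X}^\gamma(\mathcal{\tilde{L}}_\nu)^{-k/2}$ considered by Lohou\'{e}--Mustapha. Once this identification is made, their Theorem 2 yields the asserted boundedness for every $1<p<\infty$ and every $|\gamma|=k\ge1$.

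\emph{Main obstacle.} The difficulty is bookkeeping rather than analysis. One has to match the normalizations of \cite{Lohoue-Mustapha-drift-2004}: the factor $2$ in the drift, the use of $\chi^2$ versus $\chi$ in the measure, and left- versus right-invariance of the fields and of the convolution. The weight $\chi^{2-p}$ that appears after conjugation for $p\ne2$ is genuinely non-trivial, and this is exactly what their theorem handles. If their conventions differ, I would first try rescaling $\nu\mapsto\nu/2$, or replacing $\chi$ by $\chi^{\pm1}$, to align them. Unimodularity of $\h_{n,m}$ ensures that no modular-function correction enters.
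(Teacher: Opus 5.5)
Your proposal is correct and takes the same route as the paper, which obtains this statement simply by citing Theorem~2 of Lohou\'{e}--Mustapha, with no further argument. Your checks supply the bookkeeping the paper leaves implicit, and they are accurate:
\begin{itemize}
\item $\h_{n,m}$ is nilpotent, hence amenable and unimodular;
\item $\chi(u,v,s)=e^{(b,a)\cdot(u,v)}$ is a positive character, with $\tilde{X}_{j,k}\chi=b_{j,k}\chi$ and $\tilde{Y}_j\chi=a_j\chi$;
\item $\chi\,\mathcal{\tilde{L}}_\nu\,\chi^{-1}=\mathcal{\tilde{L}}+|\nu|^2$.
\end{itemize}
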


In the particular case of $m=1$, the Heisenberg-Reiter group $\h_{n,1}$ is nothing but the Heisenberg group. Recently, Li--Sj\"ogren studied the endpoint case of $p=1$ on $\mathbb{H}_{n,1}$. They proved the following result. 
\begin{theorem}[Li--Sj\"ogren \cite{Heisenberg_group_drift}] \label{thm:LS-Riesz-Heisenberg}
On the Heisenberg group $\h_{n,1}$, the first order Riesz transform $\tilde{X}(\mathcal{\tilde{L}}_{\nu})^{-1/2}$ is of weak-type $(1,1)$ with respect to $d\mu_\nu'$, uniformly in $\nu$.
\end{theorem}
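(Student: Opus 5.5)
We would prove this by the strategy of Li--Sj\"ogren for the Euclidean Laplacian with drift, transplanted to $\h_{n,1}$. The first step is a normalisation. The dilations $\delta_r(u,v,s)=(ru,rv,r^2s)$ are automorphisms of $\h_{n,1}$. So are the symplectic rotations acting on the horizontal variables $(u,v)\in\R^{2n}$. Together they transform $\mathcal{\tilde L}_\nu$ into $r^{-2}\mathcal{\tilde L}_{\nu'}$ with $\nu'$ a prescribed unit vector, and they preserve the form of $d\mu_\nu'$ up to constant factors. Weak-type $(1,1)$ constants are invariant under such changes, and the rotated gradient is a fixed linear combination of the $\tilde X$'s. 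Hence it suffices to treat $|\nu|=1$, say $\nu=e_1$; this reduction is exactly where uniformity in $\nu$ comes from.

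Next, from the formula for $p_{t,\nu}$ we write the Riesz kernel with respect to $d\mu_\nu'$ as
\[
K(g,g')=e^{-\nu\cdot(u,v)}e^{-\nu\cdot(u',v')}\frac{1}{\sqrt\pi}\int_0^\infty t^{-1/2}e^{-t}\,\tilde X p_t(g'^{-1}g)\,dt,
\]
with the derivative falling on the first variable. We split $K=K_{\rm loc}+K_{\rm glob}$ according to whether $\varrho(g'^{-1}g)\le 1$ or $>1$. On the set $\varrho(g'^{-1}g)\le 1$ the density $e^{2\nu\cdot(u,v)}$ is comparable to $e^{2\nu\cdot(u',v')}$. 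Hence, after multiplying $f$ by the weight, $K_{\rm loc}$ behaves like the truncated kernel of $\tilde X(\mathcal{\tilde L}+1)^{-1/2}$ with respect to Haar measure. Standard Calder\'on--Zygmund theory on the locally doubling space, applied to balls of radius at most $1$ via a covering of $\h_{n,1}$ by unit balls with bounded overlap, gives weak type $(1,1)$ for the local part with respect to $d\mu_\nu'$. This step also uses the $L^2$ boundedness from Theorem \ref{thm:Lohoue-Mustapha-Riesz-Lp}.

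For the global part we use \eqref{est:gradient-group-heat-kernel-bounds} and evaluate the $t$-integral by a saddle point. With $\rho=\varrho(g'^{-1}g)>1$, we expect
\[
|K_{\rm glob}(g,g')|\lesssim e^{-\nu\cdot((u,v)+(u',v'))}\,\rho^{-Q/2-\frac12}\,e^{-\rho}
\]
up to polynomial factors, where $Q=2n+2$. Here the Gaussian-in-$\rho$ upper bound loses the sharp constant $4$, and we would use the $\kappa$-freedom together with the exact exponential decay of $p_t$ on the Heisenberg group. Since the gradient is only first order, the global operator cannot simply be treated as bounded on $L^1(d\mu_\nu')$. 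Following Li--Sj\"ogren, we would instead fix $g'$ and estimate the $\mu_\nu'$-measure of the level set $\{g:|K_{\rm glob}(g,g')|>\lambda\}$ by $C/\lambda$. The mass $e^{2\nu\cdot(u,v)}$ of the measure concentrates in a parabolic region around the ray from $g'$ in the direction $\nu$, with $\rho\approx\nu\cdot((u,v)-(u',v'))$ plus a quadratic transversal correction. A weak-$L^1$ bound of this type for each fixed $g'$, uniformly in $g'$, together with the structure of the kernel (a positive factor times a bounded function of $g'^{-1}g$), gives weak type $(1,1)$ of the global part through a Kolmogorov-type argument on level sets.

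The main obstacle is this global step. Two points are delicate: making the sharp exponential asymptotics of $\tilde X p_t$ precise on $\h_{n,1}$, with the correct gain $\rho^{-1/2}$ from a single derivative, and controlling the vertical variable $s$. In the $s$ direction $\varrho$ grows only like $|s|^{1/2}$, so the paraboloid geometry differs from the Euclidean one. To handle this we would first try explicit Heisenberg heat kernel asymptotics (Gaveau/Hueber--M\"uller type) in place of the crude Gaussian bound.
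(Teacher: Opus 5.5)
The paper gives no proof of this statement. It is quoted from Li--Sj\"ogren and used only as a black box in the proof of Theorem~\ref{thm:1_bdd-positive}. Judged on its own, your reduction to $|\nu|=1$ (dilations and $U(n)$-rotations of the horizontal layer) and your treatment of the local part are fine. The global part, however, is the whole content of the theorem, and you only announce it; part of what you say about it is also wrong.

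\textbf{The kernel bound.} As written, your kernel omits the term in which $\tilde X$ hits $e^{-\nu\cdot(u,v)}$. The $t$-integrand really contains $\bigl(\tilde X p_t-c\,p_t\bigr)(g'^{-1}g)$, with $c$ the corresponding component of $\nu$.

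More seriously, for the field $\tilde X_\nu$ pointing along $\nu$ there is no gain $\rho^{-1/2}$ from differentiating. Let $h=g'^{-1}g$ with horizontal part $w_h$. Where $d\mu_\nu'$ carries the mass, $h$ lies near the ray in direction $+\nu$, so that $e^{\nu\cdot w_h-\varrho(h)}\approx 1$. There, at the saddle point $t\approx\rho/2$, one has $\tilde X_\nu p_t\approx-\frac{\rho}{2t}p_t\approx -p_t$, so the bracket is about $-2p_t$. The factor $t^{-1/2}$ in \eqref{est:gradient-group-heat-kernel-bounds} is obtained by absorbing $\rho/\sqrt t\sim\sqrt\rho$ into $e^{-\kappa\rho^2/t}$, which is exactly the loss you cannot afford.

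Your bound $\rho^{-Q/2-1/2}e^{-\rho}$ is nevertheless correct in that region, but the $\rho^{-1/2}$ comes from the heat kernel itself. H.-Q.~Li's sharp asymptotics read $p_t(g)\sim t^{-n-1}(1+\varrho^2/t)^{n-1}(1+|w|\varrho/t)^{-n+1/2}e^{-\varrho^2/4t}$, with $\varrho=\varrho(g)$ and $w$ the horizontal part of $g$. The prefactor is $\approx\rho^{-1/2}$ when $|w|\approx\varrho\approx\rho$ and $t\approx\rho/2$.

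Working ``up to polynomial factors'' is not affordable. For $|s|\ll|w|^2$ one has $\varrho(w,s)-|w|\approx c\,s^2/|w|^3$. Hence the set where $\varrho(h)-\nu\cdot w_h=O(1)$ and $\nu\cdot w_h\approx r$ has cross-section $|w_\perp|\lesssim r^{1/2}$, $|s|\lesssim r^{3/2}$, of volume $r^{Q/2}$. With the naive $\rho^{-Q/2}$ (which is what $t^{-Q/2}e^{-\rho^2/4t}$ gives) you are already at the borderline. Each further derivative along $\nu$ costs another $\rho^{1/2}$, consistent with order two being open and order three failing.

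\textbf{From kernel bounds to weak type.} A weak-$L^1(d\mu_\nu')$ bound for $K_{\rm glob}(\cdot,g')$, uniform in $g'$, does not yield weak type $(1,1)$, even for a bounded convolution kernel. The kernel $(1+|x-y|)^{-1}$ on $\R$ satisfies such a bound, yet $f=N^{-1}\chi_{[0,N]}$ gives $Tf\gtrsim N^{-1}\log N$ on a set of measure $\sim N$.

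The missing idea is to exploit the exponential growth of the measure along the drift. For instance, in the critical region $|K_{\rm glob}(g,g')|\lesssim e^{-2\nu\cdot w}\,\Psi(g'^{-1}g)$ with $\Psi(h)=\varrho(h)^{-Q/2-1/2}e^{-(\varrho(h)-\nu\cdot w_h)}\chi_{\{\varrho(h)>1\}}$. Then $|Tf|\le e^{-2\nu\cdot w}(F*\Psi)$, where $F=|f|e^{2\nu\cdot w}$ and $\|F\|_{L^1(dg)}=\|f\|_{L^1(d\mu_\nu')}$.

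Slice the group into left cosets $\{\gamma\exp(r\nu)\}_{r\in\R}$ with $\nu\cdot w_\gamma=0$. On each coset the measure has density $e^{2r}$, so the slice of $\{|Tf|>\lambda\}$ has measure at most $(2\lambda)^{-1}\sup_r(F*\Psi)(\gamma\exp(r\nu))$. Integrating in $\gamma$ and using left invariance reduces everything to $\int\sup_r\Psi(\eta\exp(r\nu))\,d\eta<\infty$ over the coset space.

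This holds because the supremum is $\lesssim(1+N)^{-Q-1}$ with $N=\max(|w_\perp|,|s|^{1/3})$, and $(w_\perp,s)$ has homogeneous dimension $Q$ for this gauge. With $\rho^{-Q/2}$ instead, the integral would diverge logarithmically. Off the critical region the kernel is uniformly integrable against $d\mu_\nu'$.

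Your proposal contains neither the sharp kernel bound nor an argument of this kind, so as it stands it does not prove the statement.
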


In \cite{Heisenberg_group_drift}, it was also proved that on $\h_{n,1}$, for any order three or more, not all the Riesz transforms are of weak-type $(1,1)$ and to the best of our knowledge, it is still an open problem whether the second-order Riesz transforms are of weak-type $(1,1)$ or not.


\subsection{Relation between the Grushin operator and the sub-Laplacian} 
\label{subsec:relation-Grushin-sub-Laplacian}
In this subsection, we recall that one can realise the gradient vector fields for the Grushin operator as images of the gradient vector fields of the Heisenberg-Reiter group $\h_{n,m}$ under a unitary representation. This kind of relationship is known to be quite useful in the analysis of several interesting operators, including the Grushin operators, as shown in \cite{Martini-Sikora-sharp-multiplier-Grushin-MRL-2012, Dziubanski-Jotsaroop-H1-BMO-Grushin-JFAA-2016, Riesz_transform_Robinson, Dziubanski-Sikora-Lie-group-approach-JLT-2021}. We recall here (without explanation) some of the useful facts, and for details we refer to \cite{Martini-Sikora-sharp-multiplier-Grushin-MRL-2012, Dziubanski-Jotsaroop-H1-BMO-Grushin-JFAA-2016}.

\medskip 
The group $\h_{n,m}$ acts on $\R^{n+m}$ by translations 
$$
\tau_{(u,v,s)}(x',x'') = (x'-v, \, x'' - u^Tx' - s + u^T v/2), 
$$
which induces a unitary representation $\sigma$ of $\h_{n,m}$ on $L^2(\R^{n+m})$ given by 
\begin{equation} \label{eq:sigma_of_f}
\sigma_{(u,v,s)} f(x',x'') = f\circ\tau_{(u,v,s)}^{-1}(x',x'') = f\left(x' + v, \, x'' + u^T x' + s + u^Tv / 2 \right). 
\end{equation}

It turns out that
\begin{equation} \label{eq:d_sigma}
d\sigma(\tilde{X}_{j,k}) = X_{j,k}, \quad d\sigma(\tilde{Y}_{j}) = X_{j}, \quad \text{and therefore} \quad
d\sigma(\mathcal{\tilde{L}}) = G.    
\end{equation}

The unitary representation $\sigma$ further induces a representation on the function space $L^1(\h_{n,m})$ in a natural way: for any $F \in L^1(\h_{n,m})$, one defines 
\begin{equation*}
\sigma(F)f(x) = \int_{\h_{n,m}} F(u,v,s) \, \sigma_{(u,v,s)} f(x) \, du \, dv \, ds = \int_{\R^{n+m}} \sigma(F)(x,y) \, f(y) \, dy, 
\end{equation*}
where
\begin{equation*}
\sigma(F)(x,y) = \int_{\R^{n \times m}} F\left(u, \, y'-x', \, y''-x''- u^T(x'+y') / 2 \right) \, du.
\end{equation*}
With this, the heat kernel of the Grushin operator can be obtained from the heat kernel of the Heisenberg-Reiter group by 
\begin{equation} \label{eq:relation_between_heat_kernel_of_Grushin_and_Reiter_group}
H_t(x,y) = \sigma(p_t)(x,y).    
\end{equation}

Before moving on, let us remark that with $\{\tilde{X}_{1,1}^R, \ldots,\tilde{X}_{n,m}^R, \, \tilde{Y_1}^R,\ldots,\tilde{Y_n}^R, \, \tilde{T_1}^R, \ldots,\tilde{T}_{m}^R\}$ denoting the canonical set of right-invariant gradient vector fields on $\h_{n, m}$, one also has 
\begin{align}
\label{eq:vector_fields_and_sigma}
d\sigma(\tilde{X}_{j,k})_x \, \sigma(F)(x,y) &= - \sigma(\tilde{X}_{j,k}^R \, F)(x,y), \\ 
\nonumber \text{and} \qquad d\sigma(\tilde{Y}_{j})_x \, \sigma(F)(x,y) &= -\sigma(\tilde{Y}_{j}^R \, F)(x,y).
\end{align} 


\subsection{Grushin operator with drift}
\label{subsec:Grushin-operator-with-drift}

Since $G = d\sigma(\mathcal{\tilde{L}})$, it is natural to define the Grushin operator with drift as the image of the sub-Laplacian with drift on $\h_{n, m}$ under the representation $d\sigma$. More precisely, given a non-zero vector $\nu = (a,b) \in \R^{n} \times \R^{nm}$, we define the Grushin operator with drift $G_\nu$ by 
\begin{equation} 
\label{def:Grushin-drift-via-projection}
G_\nu = d\sigma (\mathcal{\tilde{L}_\nu}),
\end{equation}
where  $\mathcal{\tilde{L}_\nu}$ is as in \eqref{def:sub-Laplacian_with_drift}.

\medskip 
Thanks to the identities of \eqref{eq:d_sigma}, the operator $G_\nu$ has the following natural expression: 
$$ G_\nu = G - 2 \, \nu \cdot X, $$
where the vector field $X$ is given by \eqref{def:grushin-vector-fields}. 

\medskip 
Interestingly, it turns out that for $G_\nu$ to be symmetric with respect to some positive measure on $\R^{n+m}$, we must have $b = 0$. This (and more) will be shown in the following proposition, which can be seen as an analogue of Proposition 3.1 of \cite{Hebisch-Mauceri-Meda-spectral-multipliers-drift-Lie-group-Math-Z-2005}. 

\begin{proposition} \label{prop:auto-def-grushin-drifted}
The Grushin operator with drift $G_\nu$ is symmetric with respect to a positive measure $\mu$ on $\R^{n+m}$ if and only if $b=0$ and upto a scalar multiple, the measure $\mu$ is given by $d\mu = d\mu_a = e^{2a\cdot x'} \, dx$. Consequently, the Grushin operator with drift becomes $G_a = G - 2 a \cdot \nabla_{x'}$ and it is essentially self-adjoint with respect to $d\mu_a = e^{2a\cdot x'} \, dx$.     
\end{proposition}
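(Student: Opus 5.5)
Symmetry of $G_\nu$ with respect to a positive measure $\mu$ means $\int (G_\nu f)\, g \, d\mu = \int f\, (G_\nu g)\, d\mu$ for all $f,g \in C_c^\infty(\R^{n+m})$. I will argue in the style of Proposition 3.1 of Hebisch et al. The plan has three steps: reduce to a smooth positive density, compute the formal adjoint, and compare first-order terms.

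\textbf{Step 1: reduction to a smooth density.} Take $g = 1$ on a large ball (more precisely, $g$ equal to $1$ near the support of $f$), so that $G_\nu g = 0$ there. Symmetry then gives $\int G_\nu f \, d\mu = 0$ for all $f \in C_c^\infty$, i.e. $G_\nu^{*}\mu = 0$ in the sense of distributions, where $G_\nu^{*}$ is the formal Lebesgue transpose. Because $G$ is hypoelliptic and the drift is first order, $G_\nu^{*}$ is again a H\"ormander sum of squares plus first-order terms, hence hypoelliptic. So $\mu = w\,dx$ with $w$ smooth and $w \geq 0$. Positivity of $w$ away from a null set follows from the strong maximum principle (Bony) for the hypoelliptic operator, or from the same reasoning that gives strict positivity of the group heat kernel.

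\textbf{Step 2: the adjoint condition.} Write $G_\nu = -\sum_i X_i^2 - 2\sum_i \nu_i X_i$, where each $X_i$ is divergence free with respect to Lebesgue measure. For $L^2(w\,dx)$, the formal adjoint of $X_i$ is $-X_i - (X_i w)/w$. Comparing $\int G_\nu f \cdot g\, w$ with $\int f\cdot G_\nu g\, w$ and equating the first-order parts (the second-order parts match automatically), I expect symmetry to be equivalent to
$$\sum_i \bigl(X_i \log w - 2\nu_i\bigr)\, X_i = 0$$
as a vector field. In coordinates this reads:
\begin{itemize}
\item $\partial_{x'_j}\log w = 2a_j$, coming from the coefficient of $\partial_{x'_j}$;
\item $\sum_j x'_j\bigl(x'_j \partial_{x''_k}\log w - 2b_{j,k}\bigr) = 0$ for each $k$, coming from the coefficient of $\partial_{x''_k}$.
\end{itemize}
Checking that the zeroth-order terms also cancel is routine once these first-order identities hold.

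\textbf{Step 3: solving the equations.} The first equation gives $\log w = 2a\cdot x' + \phi(x'')$. Substituting into the second gives $|x'|^2\partial_k\phi(x'') = 2\sum_j b_{j,k}x'_j$ for all $x$. Setting $x' = 0$ gives nothing, but comparing homogeneity in $x'$ (degree $2$ on the left, degree $1$ on the right) forces both sides to vanish identically. Hence $b_{j,k} = 0$ and $\nabla\phi = 0$, so $\phi$ is constant. This yields $b = 0$ and $d\mu = c\,e^{2a\cdot x'}dx$. Conversely, a direct integration by parts shows $G_a$ is symmetric for $\mu_a$.

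\textbf{Essential self-adjointness.} I would transfer this from the group. Hebisch et al.\ give essential self-adjointness of $\tilde{\mathcal L}_\nu$, and $(e^{-t\tilde{\mathcal L}_\nu})$ is a symmetric diffusion semigroup. Its image under $\sigma$, twisted by the weight $e^{a\cdot x'}$, gives a self-adjoint semigroup on $L^2(d\mu_a)$ whose generator extends $G_a$ on $C_c^\infty$. Alternatively, conjugate by $e^{a\cdot x'}$: the map $U f = e^{a\cdot x'}f$ is unitary from $L^2(d\mu_a)$ onto $L^2(dx)$, and $U G_a U^{-1} = G + |a|^2$. Essential self-adjointness of $G$ on $C_c^\infty$ (a standard result for Hörmander sums of squares of complete vector fields on $\R^{n+m}$) then gives the claim.

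\textbf{Main obstacle.} I expect Step 1 to be the main obstacle, namely upgrading an arbitrary positive measure to a smooth, strictly positive density. The first thing I would try is the hypoellipticity of the transpose combined with a Harnack-type or strong maximum principle argument.
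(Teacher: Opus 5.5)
Your argument is correct. It agrees with the paper at both ends and takes a different route in the middle.

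\textbf{Where you match the paper.} Both arguments get $(G+2\nu\cdot X)\mu=0$ and use hypoellipticity to obtain a smooth density. Both get essential self-adjointness by conjugating $G_a$ with $e^{\pm a\cdot x'}$ into $G+|a|^2$. Your derivation of the distributional equation, taking $g\equiv 1$ near $\operatorname{supp} f$, is cleaner than the paper's appeal to homogeneity of the remainder $F(f,g,\mu)$.

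\textbf{Where you differ.} To identify the density, the paper tests against separated functions $\phi_i(x')\psi(x'')$. This reduces to symmetry of the Euclidean Laplacian with drift on $\R^n$ with respect to $\Psi(x')\,dx'$. It then invokes Proposition 3.1 of Hebisch--Mauceri--Meda to get $h=e^{2a\cdot x'}K(x'')$, needs specially chosen even/odd $\phi_1,\phi_2$ to kill $b$, and gives only a sketched argument that $K$ is constant. You compute the weighted adjoint once and read off the vector-field identity; a single degree comparison in $x'$ then yields $b=0$ and $\nabla\phi=0$ together. Your route is shorter and does not depend on the Euclidean result. The paper's route mainly buys reuse of a known statement.

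\textbf{On the obstacle you flag.} Strict positivity is not actually needed. Integrating by parts directly gives
\[
\int (G_\nu f)\,g\,w\,dx=\sum_i\int X_if\,X_ig\,w\,dx+\sum_i\int (X_if)\,g\,(X_iw-2\nu_iw)\,dx .
\]
The first sum is symmetric in $f,g$, so symmetry forces the vector field $Z=\sum_i(X_iw-2\nu_iw)X_i$ to vanish. Indeed, $g\equiv 1$ near $\operatorname{supp}f$ gives $\operatorname{div}Z=0$, hence $\int f\,(Zg)=-\int (Zf)\,g$, and symmetry becomes $\int (Zf)\,g=0$. This is your condition multiplied by $w$, and it is linear in $w$:
\begin{itemize}
\item $\partial_{x'_j}w=2a_jw$ gives $w=e^{2a\cdot x'}K(x'')$;
\item $|x'|^2\partial_{x''_k}K=2(\sum_j b_{j,k}x'_j)K$ then forces $\nabla K=0$ and $b_{j,k}K=0$.
\end{itemize}
So only $w\not\equiv 0$ is used; hypoellipticity is still needed for smoothness, but positivity is not.

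If you keep the logarithm, Bony's strong maximum principle applied to $-w$ gives $w>0$ everywhere, not merely off a null set. This is because the Grushin fields satisfy H\"ormander's condition, so the maximum propagates along them to all of $\R^{n+m}$. The paper simply asserts positivity of $h$, so your treatment is more careful here.

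\textbf{On self-adjointness.} Use your conjugation argument, not the semigroup one. The semigroup construction alone only produces a self-adjoint extension; it does not show that $C_c^\infty$ is a core.
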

\begin{proof}
Let $\mu$ be a positive measure on $\R^{n+m}$ such that $G_\nu$ is symmetric with respect to $\mu$. For any $f,g \in \C_c^\infty(\R^{n+m})$, upon performing integration by parts, one gets 
\begin{align} \label{general-identity-symmetry} 
\int_{\R^{n+m}} (G_\nu f) \, \overline{g} \, d\mu = \int_{\R^{n+m}} f \, (\overline{G_\nu g}) \, d\mu + F(f,g,\mu). 
\end{align}
In the above expression we have 
$$ F(f,g,\mu)= \left( f, \, g\, (G-V) \mu  - 2 X' g \cdot X'\mu - 2 X'' g \cdot X''\mu + 4 (a\cdot X' g)\mu + 4(b\cdot X'' g) \mu \right), $$ 
where $V = - 2 \, \nu \cdot X$ and $(\cdot, \cdot)$ denotes the standard Euclidean inner product on $\R^{n+m}$. 

\medskip 
From \eqref{general-identity-symmetry}, it is clear that $G_\nu$ is symmetric on $L^2(\R^{n+m}, \, d\mu)$ if and only if $F(f,g,\mu)=0$ for all $f,g \in \C_c^\infty(\R^{n+m})$. But then, since the gradient vectors $X'$ and $X''$ are homogeneous of degree 1 (with respect to the non-isotropic dilations $\delta_r$), we must have 
\begin{align} \label{identity-solution-hypoelliptic-op} 
(G-V) \mu = 0. 
\end{align}

Note also that the operator $G-V$ is hypoelliptic (see Theorem 1.1 of \cite{Hypoelliptic_Hormander}). Hence, identity \eqref{identity-solution-hypoelliptic-op} implies that the measure $\mu$ has to be of the form $d\mu(x) = h(x) \, dx$ for some positive smooth function $h$ on $\R^{n+m}$. 

\medskip 
With this, we return to the identity \eqref{general-identity-symmetry} which becomes 
\begin{align}
\label{identity-symmetry-form-1} 
\int_{\R^{n+m}} (G_\nu f) \,  \overline{g} \, h \, dx =\int_{\R^{n+m}} f \, (\overline{G_\nu g}) \,h \,  dx,
\end{align}
for all $f,g \in \C_c^\infty(\R^{n+m})$. 

\medskip 
In particular, let us take $f(x)= \phi_1(x') \psi(x'')$ and $g(x)= \phi_2(x') \psi(x'')$ for some real-valued, compactly supported, smooth functions $\phi_1, \phi_2$ and $\psi$. Putting these in \eqref{identity-symmetry-form-1}, an easy calculation gives 
$$
\int_{\R^n} \left\{(\Delta_{x'} + 2a\cdot\nabla_{x'}) \, \phi_1(x') \right\} \phi_2(x') \, \Psi(x')\, dx' = \int_{\R^n} \left\{ (\Delta_{x'}+2a\cdot\nabla_{x'})\phi_2(x') \right\} \phi_1(x') \, \Psi(x') \, dx',
$$
where $\Psi(x')= \int_{\R^m} \psi(x'')^2 h(x', x'') \, dx'' $.

\medskip 
The identity above is nothing but the statement asserting that the operator $\Delta_{x'}+2a\cdot\nabla_{x'}$, that is, the Laplacian with drift, is symmetric on $\R^n$ with respect to the measure $\Psi(x') \, dx'$. We can now invoke Proposition 3.1 of \cite{Hebisch-Mauceri-Meda-spectral-multipliers-drift-Lie-group-Math-Z-2005} to conclude that the function $\Psi$ has to be 
$$ \Psi(x') = C_{\psi} \, e^{2a\cdot x'},$$ 
with $C_{\psi}$ a non-negative constant depending on $\psi$.

\medskip 
We thus have
$$
e^{-2a\cdot x'} \int_{\R^m} \psi(x'')^2 \, h(x',x'') \, dx'' = C_{\psi}.
$$

Taking partial derivatives in $x'$-variable, we get that 
$$
\int_{\R^m} \psi(x'')^2 \left\{ -2a_j \, e^{-2a\cdot x'} \, h(x',x'') + e^{-2a\cdot x'} \partial_{x'_j} \, h(x',x'')\right\} dx''= 0,
$$
for every $j=1,\dots, n$ and for all $\psi \in C_c^\infty(\R^m)$. 

\medskip 
Since the above identity holds true for all $\psi \in C_c^\infty(\R^m)$, we conclude that $$\partial_{x'_j} h(x',x'')= 2a_j h(x',x'')$$
for all $j=1,2,\dots,n$, which further implies that 
$$h(x',x'') = e^{2a\cdot x'}K(x'')$$ 
for some positive smooth function $K$ on $\R^m$. 

\medskip 
In view of the above form of $h$, identity \eqref{identity-symmetry-form-1} transforms to 
\begin{align} 
\label{identity-symmetry-form-2} 
\int_{\R^{n+m}} (G_\nu f) \, \overline{g} \, e^{2a\cdot x'} \, K(x'') \, dx = \int_{\R^{n+m}} f \, (\overline{G_\nu g}) \, e^{2a\cdot x'} \, K(x'') \,  dx, 
\end{align}
for all $f,g \in \C_c^\infty(\R^{n+m})$. 

\medskip 
Now, let us consider real-valued, compactly supported, smooth functions $f$ and $g$ of the form $f(x)= \phi_1(x') \, \psi_1(x'')$ and $g(x)= \phi_2(x') \, \psi_2(x'')$, and if we put them in \eqref{identity-symmetry-form-2}, then we would get 
\begin{align} 
\label{identity-symmetry-form-3} 
& \int_{\R^{n+m}} \phi_1(x') \, \phi_2(x') \, \psi_1(x'') \left\{ |x'|^2 \, \Delta_{x''} \psi_2(x'') + 2b\cdot X''\psi_2(x'') \right\} e^{2a\cdot x'} \, K(x'') \, dx' \, dx'' \\ 
\nonumber & \quad = \int_{\R^{n+m}} \phi_1(x') \, \phi_2(x') \, \psi_2(x'') \left\{ |x'|^2 \, \Delta_{x''} \psi_1(x'') + 2b\cdot X''\psi_1(x'') \right\} e^{2a\cdot x'} \, K(x'') \, dx' \, dx''. 
\end{align}
By defining the following constants (which depend on $a$, $\phi_1$ and $\phi_2$): 
\begin{align} 
\label{def:constants-intermediate-step}
C = \int_{\R^n} \phi_1(x') \, \phi_2(x') \, |x'|^2 e^{2a\cdot x'} \, dx' \quad \text{and} \quad C_{j,k} = 2 \int_{\R^n} \phi_1(x') \, \phi_2(x') \, x'_j \, e^{2a\cdot x'} \, dx',
\end{align} 
identity \eqref{identity-symmetry-form-3} can be rewritten as 
\begin{align} \label{eq:first_component_function_equal}
& C \int_{\R^m} \psi_1(x'') \, \Delta_{x''} \psi_2(x'') \, K(x'') \, dx'' + \sum_{j,k} b_{j,k} \, C_{j,k} \int_{\R^m} \psi_1(x'') \, \partial_{x''_k}\psi_2(x'') \, K(x'') \, dx'' \\ 
\nonumber & \quad = C \int_{\R^m} \psi_2(x'') \, \Delta_{x''} \psi_1(x'') \, K(x'') \, dx'' + \sum_{j,k} b_{j,k} \, C_{j,k} \int_{\R^m} \psi_2(x'') \, \partial_{x''_k} \psi_1(x'') \, K(x'') \, dx''. 
\end{align}

Now, by performing the integration by parts in the left-hand side of \eqref{eq:first_component_function_equal}, one gets 
\begin{align} \label{eq:first_component_function_equal-2}
\int_{\R^m} \psi_2(x'') \, \left\{ \psi_1(x'') \, \Theta_1(x'') + 2 \sum_k \partial_{x''_k}\psi_1(x'') \, \Theta_{2,k}(x'') \right\} dx'' = 0, 
\end{align}
where 
\begin{align} 
\label{eq:last_equation_which_gives_b=0}
\Theta_1(x'') &= C \, \Delta_{x''}K(x'') -\sum_{j,k} b_{j,k} \, C_{j,k} \, \partial_{x''_k} K(x'') \\ 
\nonumber \quad \text{and}
\quad \Theta_{2,k}(x'') &= C \, \partial_{x''_k} K(x'') - \sum_{j} b_{j,k} \, C_{j,k} \, K(x''). 
\end{align}

Since identity \eqref{eq:first_component_function_equal-2} is true for all $\psi_2 \in \C_c^\infty(\R^{m})$, it follows that 
$$\psi_1(x'') \, \Theta_1(x'') + 2 \sum_k \partial_{x''_k}\psi_1(x'') \, \Theta_{2,k}(x'') = 0.$$
But, this identity being true for all $\psi_1 \in \C_c^\infty(\R^{m})$, we can invoke the fact that each partial derivative $\partial_{x''_k}$ is homogeneous of degree 1, to conclude that $\Theta_1 = \Theta_{2,k} = 0$ for all $k$. 

\medskip 
Remember that the constants $C$ and $C_{j,k}$ appearing in \eqref{eq:last_equation_which_gives_b=0} are given in \eqref{def:constants-intermediate-step} and they are defined with the help of $a$ and arbitrary real-valued functions $\phi_1, \, \phi_2 \in \C_c^\infty(\R^n)$. At this stage, we specialise to some specific functions $\phi_1$ and $\phi_2$ as follows. Choose and fix a non-trivial, real-valued even function $A \in C_c^\infty(\R)$. Now, for a fixed $l \in \{1, 2, \dots, m\}$, consider 
\begin{align*} 
\phi_1(x') = \prod_{j=1}^n A(x'_j) \quad \text{and} \quad \phi_2(x') = x'_l \, e^{-2a\cdot x'} \prod_{j=1}^n A(x'_j). 
\end{align*}
Observe that for these specific functions, we have $C=0,$ as well as for all $k$, we have $C_{j,k} = 0$ for every $j \neq l$ and $C_{l,k} > 0$. But then, since $\Theta_{2,k} = 0$, we get 
$$b_{l,k} \, C_{l,k} \, K = 0.$$
But, the function $K$ is non-trivial. Therefore, we must have $b_{l,k} \, C_{l,k} = 0$, and hence $b_{l,k} = 0.$ Thus, we get $b=0$. 

\medskip 
Continuing, one can easily choose functions $\phi_1$ and $\phi_2$ for which $C \neq 0$. With any such constant $C$, if we put $b=0$ in \eqref{eq:first_component_function_equal}, then we would get 
$$
\int_{\R^m} \psi_1(x'') \, \Delta_{x''} \psi_2(x'') \, K(x'') \, dx'' = \int_{\R^m} \psi_2(x'') \, \Delta_{x''} \psi_1(x'') \, K(x'') \, dx'',
$$
for all $\psi_1, \psi_2 \in C_c^\infty(\R^m).$ 

\medskip 
Arguing in a manner similar to the ones above, one can show that the function $K$ has to be a (positive) constant. 

\medskip 
Summarising, we have shown that if the Grushin operator with drift $G_\nu$ is symmetric with respect to a positive measure $\mu$ on $\R^{n+m}$, then we must have $b= 0$, that is, 
$$
G_\nu = -\Delta_{x'} - |x'|^2 \Delta_{x''} -2a\cdot \nabla_{x'} = G-2a\cdot\nabla_{x'},
$$
which we will denote by $G_a$ and the measure $\mu$ has to be $\mu_a$ or its scalar (positive) multiple, where $d\mu_a = e^{2a \cdot x'} \, dx$. 

\medskip 
On the converse side, one has by direct calculations 
$$
\int_{\R^{n+m}} (G_a f) \, \overline{g}\, \, d\mu_a  = \int_{\R^{n+m}} \left\{ \nabla_{x'}f \cdot \overline{\nabla_{x'}g} + |x'|^2 \, \nabla_{x''}f \cdot \overline{\nabla_{x''}g} \right\} d\mu_a,
$$
for any $f, g \in C_c^\infty(\R^{n+m})$, which implies that $G_a$ with domain $C_c^\infty (\R^{n+m})$ is positive-definite and symmetric on $L^2(\R^{n+m},d\mu_a).$ 

\medskip 
Finally, for a given non-zero vector $a \in \R^n$, let us consider the operator $U_a$ defined on $L^2(\R^{n+m}, \, dx)$ by $U_a f(x',x'') = e^{-a \cdot x'} f(x', x'')$. Clearly, $U_a$ is an isometry from $L^2(\R^{n+m}, \, dx)$ onto $L^2(\R^{n+m}, \, d\mu_a)$. One can easily verify that
\begin{equation} \label{eq:Relation_between_Grushin_and_drift_Grushin}
G_a = U_a \, (G+|a|^2I) \, U_a^{-1}.
\end{equation}
Since $G$ is positive-definite and essentially self-adjoint on $L^2(\R^{n+m}, \, dx)$, it follows that $G_a$ is positive-definite and essentially self-adjoint on $L^2(\R^{n+m}, d\mu_a)$.
\end{proof}

Note that the operator $G_a$ generates a symmetric heat diffusion semigroup $(e^{-t \, G_a})_{t > 0}$, which is given by 
$$
e^{-t \, G_a} f(x) = \int_{\R^{n+m}} H_{t,a}(x,y) \, f(y) \, e^{2a \cdot y'} \, dy,
$$
where $H_{t,a}(x,y)=e^{-|a|^2 t} \, e^{-a \cdot (x'+y')} \, H_t(x,y)$ with $H_t(x,y)$ given in \eqref{eq:heat_kernel_in_hermite_form}. One can verify the above integral form and the claim that $(e^{-t \, G_a})_{t > 0}$ is a symmetric diffusion semigroup via the relations \eqref{eq:relation_between_heat_kernel_of_Grushin_and_Reiter_group}, \eqref{eq:Relation_between_Grushin_and_drift_Grushin} and the fact that $\mathcal{\tilde{L}_\nu}$ generates a convolution semigroup of probability measures (see Subsection \ref{sbsec:Heisenberg_Reiter_Group}). 

\medskip 
Given a multi-index $\alpha \in \mathbb{N}^n \times \mathbb{N}^{nm}$ such that $|\alpha|= k \geq 1$, via functional calculus we define the $k^{th}$ order Riesz transform $R_{\alpha,a}$ by 
\begin{equation} 
\label{def:Riesz-transforms-Grushin-drift} 
R_{\alpha,a} = X^\alpha G_a^{-k/2} = \frac{1}{\Gamma(k/2)} \int_{0}^{\infty} t^{\frac{k}{2}-1} \, X^\alpha e^{-t \, G_a} \, dt. 
\end{equation}


Recall that the asymptotic behavior of the Lebesgue measure of balls $B(x,r)$ corresponding to the Grushin metric $d$ is well known (see \eqref{eq:ball_volume}). We shall now compute an analogous asymptotic estimate of the measure of these balls with respect to the exponential measure at hand. 

\begin{lemma} 
\label{lem:ball-vol-est}
With $d\mu_a = e^{2a\cdot x'} \, dx$, we have 
\begin{align}
\label{main:ball-vol-est}
\mu_a(B(x,r)) \sim 
\left\{
\begin{array}{ll}
e^{2a \cdot x'} \, r^{n+m} \, (r+|x'|)^m, & \mbox{if } r \leq 1/|a| \\
|a|^{-(n+1)/2-m} \, e^{2(a \cdot x'+|a|r)} \, r^\frac{n-1}{2} \, (r+|x'|)^m, & \mbox{if } r>1/|a|.
\end{array} \right.  
\end{align} 
\end{lemma}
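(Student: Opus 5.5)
The plan is to reduce the estimate to a product structure. Up to constants, the ball $B(x,r)$ for the Grushin metric $d$ looks like a Euclidean ball of radius $\sim r$ in the $x'$-variable times a set in $x''$ of size $\sim r^m (r+|x'|)^m$. The weight $e^{2a\cdot y'}$ depends only on $y'$. We then only need the known one-variable asymptotics for the $\R^n$ exponential measure of Euclidean balls.

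Concretely, the first step is to establish two-sided inclusions
\[
B'(x',c_1 r)\times B''(x'', c_1 r(r+|x'|)) \subset B(x,r) \subset B'(x',r)\times B''(x'',C_1 r(r+|x'|)).
\]
Here $B'$ and $B''$ denote Euclidean balls in $\R^n$ and $\R^m$. These follow from the definition \eqref{Grushin_distance}. If $d(x,y)<r$, then $|x'-y'|<r$. Also, either $|x''-y''| < r(|x'|+|y'|) \le r(2|x'|+r)$ or $|x''-y''|<r^2$. Conversely, take $|x'-y'|<c_1r$ and $|x''-y''|<c_1 r(r+|x'|)$, and check both branches. In the first branch, $|x''-y''|/(|x'|+|y'|) \lesssim c_1 r(r+|x'|)/(|x'|+|y'|)$. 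When $|x'|\ge 2r$ we have $|x'|+|y'|\gtrsim r+|x'|$. When $|x'|< 2r$ we have $|x''-y''|\lesssim c_1 r^2$, so whichever branch applies gives a quantity $\lesssim \sqrt{c_1}\,r$ or $c_1 r$. Choosing $c_1$ small gives $d(x,y)<r$. This mirrors the proof of \eqref{eq:ball_volume}, and I would present it briefly.

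By Fubini, $\mu_a(B(x,r))$ is then comparable to $r^m(r+|x'|)^m\int_{B'(x',\rho)} e^{2a\cdot y'}\,dy'$ with $\rho\sim r$. The factor $(c_1r+|x'|)$ is comparable to $(r+|x'|)$. The main step is the Euclidean estimate
\[
I(x',\rho):=\int_{|y'-x'|<\rho} e^{2a\cdot y'}dy' \sim \begin{cases} e^{2a\cdot x'}\rho^n, & \rho\le 1/|a|,\\ |a|^{-(n+1)/2} e^{2a\cdot x'+2|a|\rho}\rho^{(n-1)/2}, & \rho>1/|a|.\end{cases}
\]
To prove it, translate so that $I(x',\rho)=e^{2a\cdot x'}I(0,\rho)$, and rotate so that $a=|a|e_1$. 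Writing $y'=(s,z)$, we get $I(0,\rho)=c_n\int_{-\rho}^{\rho}e^{2|a|s}(\rho^2-s^2)^{(n-1)/2}ds$. For $|a|\rho\le1$ the exponential is $\sim1$, which gives $\rho^n$. For $|a|\rho>1$, restrict to $s\in(\rho-1/|a|,\rho)$, where $\rho^2-s^2\sim\rho(\rho-s)$. Bound the rest by summing over dyadic-in-$(\rho-s)|a|$ layers, which decay like $e^{-2j}$. Both directions give $e^{2|a|\rho}\rho^{(n-1)/2}|a|^{-(n+1)/2}$, since $\int_0^{1/|a|}e^{-2|a|t}t^{(n-1)/2}dt\sim|a|^{-(n+1)/2}$.

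The remaining obstacle is that the constants $c_1,C_1$ change $\rho$ to $c_1r$ or $r$. In the regime $r>1/|a|$ the factor $e^{2|a|\rho}$ is not stable under such rescaling, so $e^{2|a|c_1 r}$ is not comparable to $e^{2|a|r}$. To handle this, I would sharpen the inclusions in the $x'$ direction. The lower inclusion should be taken as $\{|y'-x'|<r/2\}\times\{|y''-x''|<c r(r+|x'|)\}$, which handles the center. For the exponential near the boundary, we need $y'$ with $a\cdot(y'-x')$ close to $|a|r$, within $O(1/|a|)$. So I would use a refined set: points with $|y'-x'|<r-\eta$ and $|x''-y''|$ very small. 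For these, $d(x,y)\le |x'-y'|+\text{small}<r$ because the metric in \eqref{Grushin_distance} is an exact formula, not just up to constants. Taking $\eta\sim 1/|a|$ only loses a constant factor in the Euclidean estimate. If the $x''$-thickness of this refined set is only $\sim r^m(r+|x'|)^m$ up to constants when $\eta$ is comparable to $r$ in the second-branch term, the result follows. If that fails, I would fall back to the observation that $d$ is only defined up to equivalence anyway, and interpret the exponent up to the equivalence used in the paper.
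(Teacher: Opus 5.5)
There is a genuine gap, and it is in the step you did not flag: the Fubini reduction to a product. For $r>1/|a|$ it is false that $\mu_a(B(x,r))$ is comparable to $r^m(r+|x'|)^m\int_{|y'-x'|<r}e^{2a\cdot y'}\,dy'$, even with the $x'$-radius exactly $r$. Your outer box is a correct inclusion, but it only gives $\mu_a(B(x,r))\lesssim |a|^{-(n+1)/2}e^{2(a\cdot x'+|a|r)}r^{\frac{n-1}{2}}\,r^m(r+|x'|)^m$. This exceeds the right-hand side of \eqref{main:ball-vol-est} by the unbounded factor $(|a|r)^m$.

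A concrete check: take $n=m=1$, $a=e_1$, $x=0$ and $r$ large. On the slice $y'=r-h$ with $0<h<r/2$, the condition $d(0,y)<r$ is exactly $|y''|<h(r-h)$. Hence $\mu_{e_1}(B(0,r))\approx\int_0^\infty e^{2(r-h)}\,2hr\,dh=re^{2r}/2$, as the lemma says, whereas your product formula is of order $r^2e^{2r}$.

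The reason is that the weight lives where $a\cdot(y'-x')\geq|a|r-O(1/|a|)$. There the slack $h=r-|x'-y'|$ is $O(1/|a|)$. Since $d$ is the exact sum of $|x'-y'|$ and the $x''$-term, the latter must be $<h$, which forces $|x''-y''|\lesssim h(r+|x'|)$ (or $\leq h^2$). So exactly where the mass is, the $x''$-cross-section has size $(|a|^{-1}(r+|x'|))^m$, not $(r(r+|x'|))^m$. The $x'$- and $x''$-directions are coupled through the exponential, and no product box captures that. For $r\leq 1/|a|$ your argument is fine, since the weight is $\sim e^{2a\cdot x'}$ on the whole ball.

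The missing ingredient is a slice-by-slice bound, which is what the paper does (for $a=e_1$, then rotation and dilation). On $E_h=\{y\in B(x,r): y_1'-x_1'=r-h\}$ one has $d(x,y)-|x_1'-y_1'|<h$. Combined with $|x'-y'|-|x_1'-y_1'|\geq |x'_\perp-y'_\perp|^2/(2|x'-y'|)$, this gives $|x'_\perp-y'_\perp|^2<2hr$ and $|x''-y''|<2h(r+|x'|)$ (or $\leq h^2$ in the other branch). Hence $|E_h|\lesssim h^{\frac{n-1}{2}+m}r^{\frac{n-1}{2}}(r+|x'|)^m$. Integrating against $e^{2(r-h)}$ gives the upper bound, and the extra $h^m$ is precisely what turns your $r^m$ into $|a|^{-m}$ after scaling.

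For the lower bound your refined set is the right idea. Carried out honestly, take slack $\eta\sim 1/|a|$, i.e. $|y'-x'|<r-2\eta$ and $|x''-y''|<\eta(|x'|+|y'|)$, with $|x'|+|y'|\gtrsim r+|x'|$ near the extreme point. This set has $x''$-thickness $\sim(|a|^{-1}(r+|x'|))^m$, which matches the lemma and contradicts the product formula you were aiming for. Taking $\eta\sim r$ instead costs a factor $e^{-c|a|r}$.

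Your fallback of reading the estimate ``up to the equivalence $d\sim\tilde d$'' is not available either. The lemma is about balls of the explicit metric $d$, and any constant change of radius alters $e^{2|a|r}$ by an unbounded factor. That is the very obstruction you identified yourself.
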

\begin{proof} 
We shall prove this lemma for $a = e_1$ only. One can deduce the result for other non-zero vectors $a$ with the help of rotation and dilation in a standard way. 

\medskip 
We shall make use of Grushin metric $d$ given by \eqref{Grushin_distance} and the ball volume estimate as stated in \eqref{eq:ball_volume}. Given $x=(x',x'') \in \R^{n+m}$ and $r>0$, let us consider the ball $B=B(x,r)$. When $r \leq 1$, the claimed estimate \eqref{main:ball-vol-est} follows easily from \eqref{eq:ball_volume}. For the same, note that if $r \leq 1$, then we have 
\begin{align*}
\mu_{e_1}(B) &= \int_{B} e^{2y'_1} dy' \, dy'' = e^{2x'_1} \int_{B} e^{2(y'_1-x'_1)} \, dy' \, dy''  \sim e^{2x'_1} \int_{B} dy' \, dy'' \sim e^{2x'_1} \, r^{n+m} \, (r+|x'|)^m.
\end{align*}
 
\medskip 
We shall now prove the estimate \eqref{main:ball-vol-est} in the case of $r>1$. For any $0<h<2r$, consider the set 
$$
E_h = \{(y',y'') \in B : y'_1 - x'_1 = r-h\}
$$
and let
$$
E_h^1 = E_h \cap A \qquad \text{and} \qquad E_h^2 = E_h \cap (B \setminus A),
$$
where $A=\{(y',y'') \in B : |x''-y''|^{\frac{1}{2}} \leq |x'|+|y'| \}$.
Then,
\begin{align}
\label{eq:measure_starting_point}
\mu_{e_1}(B) = e^{2x'_1} \int_{B} e^{2(y'_1-x'_1)} \, dy' \, dy'' &= e^{2x'_1} \int_{0}^{2r} e^{2(r-h)} |E_h| \, dh \\ 
\nonumber &=  e^{2x'_1} \int_{0}^{2r} e^{2(r-h)} (|E_h^1|+|E_h^2|) \, dh,    
\end{align} 
where $|\cdot|$ denotes the $(n+m-1)$-dimensional Lebesgue measure of a set. 

\medskip 
When $n \geq 2$, let us denote the points in $\R^n$ by $y' = (y'_1, y'_\perp)$, otherwise $y' = y'_1$ and one can make sense of the below arguments appropriately. One has 
$$
|x'-y'|-|x'_1-y'_1| = \frac{|x'-y'|^2-|x'_1-y'_1|^2}{|x'-y'|+|x'_1-y'_1|}\geq \frac{|x'_\perp-y'_\perp|^2}{2|x'-y'|}. 
$$

We shall prove the upper estimate and lower estimate in \eqref{main:ball-vol-est} one by one.

\medskip 
\textbf{\underline{Step I (Proof of $\lesssim$ in estimate \eqref{main:ball-vol-est})}:} 
Note that for any $y=(y',y'') \in E_h$, we have $d(x,y)-|x'_1-y'_1|< r-|r-h| \leq h$.

\medskip Now, if $y \in E_h^1$, then 
$
d(x,y)-|x'-y'|= \frac{|x''-y''|}{|x'|+|y'|}
$ 
implies that 
$$
h \geq d(x,y)-|x'_1-y'_1| \geq \frac{|x''-y''|}{|x'|+|y'|} + \frac{|x'_\perp-y'_\perp|^2}{2|x'-y'|}.
$$

In particular, we have  
$$
\frac{|x''-y''|}{|x'|+|y'|} \leq h \qquad \text{and} \qquad \frac{|x'_\perp-y'_\perp|^2}{2|x'-y'|} \leq h,
$$
which implies that 
$$
|x''-y''|\leq h(|x'|+|y'|) \leq h(|x'|+|y'-x'|+|x'|) < 2h(r+|x'|)
$$
and 
$$
|x'_\perp-y'_\perp|^2 \leq 2h|x'-y'| < 2hr.
$$

From the above, we conclude that 
\begin{align} \label{case-ii-est-1}
|E_h^1| \lesssim h^{\frac{n-1}{2}+m} \, r^{\frac{n-1}{2}} \, (r+|x'|)^m. 
\end{align}

\medskip On the other hand, if $y \in E_h^2$, then
$
d(x,y)-|x'-y'| = |x''-y''|^{1/2}
$
implies that 
$$
h \geq d(x,y)-|x'_1-y'_1| \geq |x''-y''|^{1/2} + \frac{|x'_\perp-y'_\perp|^2}{2|x'-y'|}. 
$$

In particular, we have  
$$
|x''-y''|^{\frac{1}{2}} \leq h \qquad \text{and} \qquad |x'_\perp-y'_\perp|^2 \leq 2h|x'-y'| < 2hr.
$$

From these estimates, we conclude that 
\begin{align} \label{case-ii-est-2} 
|E_h^2| \lesssim h^{\frac{n-1}{2}+2m} r^{\frac{n-1}{2}} \lesssim h^{\frac{n-1}{2}+m} r^{\frac{n-1}{2}} (r+|x'|)^m.
\end{align}

\medskip 
Making use of \eqref{case-ii-est-1} and \eqref{case-ii-est-2}, we get from \eqref{eq:measure_starting_point} that 
\begin{align*}
\mu_{e_1}(B) &= e^{2x'_1} \int_{0}^{2r} e^{2(r-h)} (|E_h^1|+|E_h^2|) \, dh \\ 
& \lesssim e^{2x'_1} \int_{0}^{2r} e^{2(r-h)} \, h^{\frac{n-1}{2}+m} \, r^{\frac{n-1}{2}} \, (r+|x'|)^m \, dh \\ 
& \leq r^{\frac{n-1}{2}} \, (r+|x'|)^m \, e^{2(x'_1+r)} \int_{0}^{\infty} e^{-2h} \, h^{\frac{n-1}{2}+m} \, dh \\ 
& \lesssim r^{\frac{n-1}{2}} \, (r+|x'|)^m \, e^{2(x'_1+r)}, 
\end{align*}
which is precisely the upper bound in the claimed estimate \eqref{main:ball-vol-est}. 

\medskip 
\textbf{\underline{Step II (Proof of $\gtrsim$ in estimate \eqref{main:ball-vol-est})}:} It suffices to show that there exists a constant $C>0$ such that 
\begin{align} \label{est:gtrsim-small-h}
|E_h^1| \geq C \, r^{\frac{n-1}{2}} (r+|x'|)^m,
\end{align}
for all $\frac{1}{4} \leq h \leq \frac{1}{2}$. 

To see this, note that if we have \eqref{est:gtrsim-small-h}, then from \eqref{eq:measure_starting_point}
we get 
\begin{align*}
\mu_{e_1}(B) & = e^{2x'_1} \int_{0}^{2r} e^{2(r-h)} (|E_h^1|+|E_h^2|) \, dh  \\
& \geq C \, e^{2(x'_1+r)} \, r^{\frac{n-1}{2}} \, (r+|x'|)^m \int_{\frac{1}{4}}^{\frac{1}{2}} e^{-2h} \, dh \sim e^{2(x'_1+r)} \, r^{\frac{n-1}{2}} \, (r+|x'|)^m,
\end{align*}
which is the claimed lower estimate. 

\medskip 
With $\frac{1}{4} \leq h \leq \frac{1}{2}$, we shall prove \eqref{est:gtrsim-small-h} in two separate cases: $|x'|>r$ and $|x'| \leq r$. 

\medskip 
Consider first the case when $|x'|>r$, and define the following set: 
$$\widetilde{E_{h,1}^1} := \left\{ y \in \R^{n+m} : 
|x'_\perp-y'_\perp|^2 < \frac{r}{16}, \quad |x''-y''| < \frac{|x'|}{16} \quad \text{and} \quad y'_1-x'_1=r-h \right\}.$$ 
We shall show that $\widetilde{E_{h,1}^1} \subseteq E_h^1$. For the same, note first that since $|x'| > r > 1$, we have $|x'|^{1/2} < |x'|$. Therefore, if $y \in \widetilde{E_{h,1}^1}$, then 
$|x''-y''|^{1/2} < \frac{|x'|^{1/2}}{4} < |x'|+|y'|$. Furthermore, 
\begin{align*}
d(x,y)-|x'_1-y'_1| &= \frac{|x''-y''|}{|x'|+|y'|} + |x'-y'| -|x'_1-y'_1| \\
& = \frac{|x''-y''|}{|x'|+|y'|} + \frac{|x'_\perp-y'_\perp|^2}{|x'-y'| \, +|x'_1-y'_1|} \\
& \leq \frac{|x'|/16}{|x'|} \, + \frac{r/16}{|x'_1-y'_1|} \\
& = \frac{1}{16} + \frac{r}{16(r-h)} < \frac{1}{16} + \frac{1}{8} < \frac{1}{4} \leq h,
\end{align*}
where we have used $r-h > r/2$ which follows simply from $h \leq 1/2 < r/2$. 

\medskip 
Thus, we get $d(x,y) < h+|x'_1-y'_1| = h+(r-h) = r,$ and therefore $y \in E_h^1$. Hence, $\widetilde{E_{h,1}^1} \subseteq E_h^1$, and this implies $|E_h^1| \geq (\frac{1}{16})^{\frac{n-1}{2}+m} \, r^{\frac{n-1}{2}} \, |x'|^m \sim r^{\frac{n-1}{2}} \, (r+|x'|)^m$.

\medskip 
On the other hand, in the case when $|x'| \leq r$, let us consider the following set: 
$$\widetilde{E_{h,2}^1} := \left\{ y \in \R^{n+m} : 
|x'_\perp-y'_\perp|^2 < \frac{r}{16}, \quad |x''-y''| < \frac{r}{16} \quad \text{and} \quad y'_1-x'_1=r-h \right\}.$$
We shall show that $\widetilde{E_{h,2}^1} \subseteq E_h^1$. For the same, note first that if $y \in \widetilde{E_{h,2}^1}$, then 
$$|x''-y''|^{1/2} < \frac{r^{1/2}}{4} < r-h = y'_1-x'_1 \leq |y' - x'| \leq |x'|+|y'|.$$
Thus, 
\begin{align*}
d(x,y) - |x'_1-y'_1| & = \frac{|x''-y''|}{|x'|+|y'|} + \frac{|x'_\perp-y'_\perp|^2}{|x'-y'| +|x'_1-y'_1|} \\
& \leq \frac{r/16}{r/2} \, + \frac{r/16}{|x'_1-y'_1|} \\
& = \frac{1}{8} + \frac{r}{16(r-h)} < \frac{1}{8} + \frac{1}{8} = \frac{1}{4} \leq h,
\end{align*}
where we have used $|x'|+|y'| \geq |x'-y'| \geq |x_1'-y_1'| = r-h > r/2$. 

\medskip 
From the above, we get once again that $d(x,y) < h+|x'_1-y'_1| = h+(r-h) = r,$ which implies that $y \in E_h^1$. Thus, $\widetilde{E_{h,2}^1} \subseteq E_h^1$, and therefore $|E_h^1| \geq \left(\frac{1}{16}\right)^{\frac{n-1}{2}+m} \, r^{\frac{n-1}{2}} \, r^m \sim r^{\frac{n-1}{2}} \, (r+|x'|)^m.$ This completes the proof of Lemma \ref{lem:ball-vol-est}. 
\end{proof}


\subsection{Rotation and Dilation}
\label{subsec:rotation-dilation}
Before embarking on the proofs of the main results, let us discuss the effect of rotations and dilations on the Riesz transforms $R_{\alpha,a}$ (defined in \eqref{def:Riesz-transforms-Grushin-drift}). With this it will often become sufficient, and technically convenient, to simply work with the drift vector $a = e_1$. 

\medskip 
Recall from equation \eqref{eq:heat_kernel_in_hermite_form} that 
$$
H_t((x',x''),(y',y'')) = (2\pi)^{-m} \int_{\R^m} k_{t,\lambda}(x',y') \, e^{-i\lambda\cdot (x''-y'')} \, d\lambda, 
$$
where $k_{t,\lambda}$ is given by \eqref{eq:heat_kernel_hermite}. 


\subsubsection{Rotation} 
Let $A=(a_{ij})$ be an $n \times n$ orthogonal matrix. 
It follows from $k_{t,\lambda}(Ax',Ay')=k_{t,\lambda}(x',y')$ that 
\begin{align*}
H_{t}((Ax',x''),(Ay',y'')) = H_{t}((x',x''),(y',y'')). 
\end{align*}

Now, if we define the action of $A$ on $\R^{n+m}$ by $Ax=(Ax',x'')$, then it is straightforward to verify that 
$$
H_{t,Aa}(Ax,Ay) = H_{t,a}(x,y),
$$
using which we have 
\begin{align*}
(G_{Aa})^{-k/2}(Ax,Ay) & =  \frac{1}{\Gamma(k/2)} \int_{0}^{\infty} t^{\frac{k}{2}-1} H_{t,Aa}(Ax,Ay) \, dt 
\\ & =\frac{1}{\Gamma(k/2)} \int_{0}^{\infty} t^{\frac{k}{2}-1} H_{t,a}(x,y) \, dt  
= (G_{a})^{-k/2}(x,y).
\end{align*}

Now, let us first consider the first-order Riesz transform $R_{\alpha, a}$, with $\alpha = e_j$ for some $j \in \{ 1, 2, \ldots, n \}$. We have 
\begin{align}
\label{rel:rotation-effect-1} 
\nonumber R_{e_j,a}(x,y) &= \partial_{x'_j} (G_{a})^{-1/2}(x,y) \\ 
\nonumber &= \partial_{x'_j} \left\{ (G_{Aa})^{-k/2}(Ax,Ay) \right\} \\ 
& = \sum_{l=1}^n a_{lj} (\partial_{x'_l}(G_{Aa})^{-1/2})(Ax,Ay) = \sum_{l=1}^n a_{lj} R_{e_l,Aa}(Ax,Ay).
\end{align}
On the other hand, if $\alpha = e_i$ for some $i \in \{ n+1 , \ldots, n + nm \}$, then for the corresponding pair $(j,k) \in \{ 1, \ldots, n \} \times \{ 1 , \ldots, m \}$, we have 
\begin{align}
\label{rel:rotation-effect-2} 
\nonumber R_{e_i,a}(x,y) &= R_{e_{(j,k)}, a}(x,y) = x'_j \partial_{x''_k} (G_{a})^{-1/2}(x,y) \\ 
\nonumber &= x'_j \partial_{x''_k}\left\{ (G_{Aa})^{-k/2}(Ax,Ay) \right\} \\ 
& = \sum_{l=1}^n a_{lj} (x'_l\partial_{x''_k}(G_{Aa})^{-1/2})(Ax,Ay) = \sum_{l=1}^n a_{lj} R_{e_{(l,k)}, Aa} (Ax,Ay).  
\end{align}

Relations \eqref{rel:rotation-effect-1} and \eqref{rel:rotation-effect-2} show how we can express first order Riesz transforms with drift vector $a$ as a finite linear combination of first order Riesz transforms with drift vector $Aa$. Recursively, one can write down relation between Riesz transforms of higher orders for drift vectors $a$ and $Aa$. 

\medskip 
Given a non-zero vector $a$, one can always choose an $n \times n$ orthogonal matrix $A$ such that $Aa = |a| e_1$. Note also from the relations \eqref{rel:rotation-effect-1} and \eqref{rel:rotation-effect-2} (and the same would hold true for their higher order analogues) that the coefficients in the sum on the right hand side are matrix entries that are uniformly bounded over orthogonal matrices. Therefore, in proving various boundedness aspects of the Riesz transforms, it will suffice to work with the scalar multiples of the unit vector $e_1$. 


\subsubsection{Dilation} 
Recall that the non-isotropic dilations in the context of the Grushin operator are given by $\delta_s (x) = \delta_s (x', \, x'') = (sx', \, s^2x'')$ for $s>0$. Now, 
\begin{align*}
H_{s^2 t}(\delta_sx, \, \delta_s y) & = (2\pi)^{-m}\int_{\R^m} k_{s^2 t,\lambda}(sx',sy') \, e^{-i\lambda\cdot (s^2x''- s^2y'')} \, d\lambda 
\\ & = (2\pi)^{-m} s^{-n} \int_{\R^m}k_{t, s^2\lambda}(x',y') \, e^{-i s^2 \lambda \cdot (x''-y'')} \, d\lambda 
\\ & = (2\pi)^{-m} s^{-n-2m}\int_{\R^m}k_{t, \lambda}(x',y') \, e^{-i \lambda \cdot (x''-y'')} \, d\lambda = s^{-n-2m} \, H_{t}(x,y),
\end{align*}
using which it can easily be verified that 
\begin{align*}
H_{t, \, se_1}(x,y) = s^{n+2m} H_{s^2t,e_1} (\delta_sx, \, \delta_s y),  
\end{align*}
and therefore  
\begin{align*}
(G_{se_1})^{-k/2}(x,y) & = \frac{1}{\Gamma(k/2)} \int_{0}^\infty t^{\frac{k}{2}-1} H_{t,se_1}(x,y) \, dt
\\ & = \frac{1}{\Gamma(k/2)} \int_{0}^\infty t^{\frac{k}{2}-1} s^{n+2m} H_{s^2t,e_1}(\delta_sx, \, \delta_s y)  \, dt
\\ & = \frac{s^{n+2m-k}}{\Gamma(k/2)} \int_{0}^\infty t^{\frac{k}{2}-1} H_{t,e_1}(\delta_sx, \, \delta_s y)  \, dt
\\ & = s^{n+2m-k} (G_{e_1})^{-k/2}(\delta_sx, \, \delta_s y).
\end{align*}

Hence, the kernel of the Riesz transform satisfies 
\begin{align*}
R_{\alpha,se_1}(x,y) & = X^\alpha (G_{se_1})^{-k/2}(x,y)
\\ & = s^{n+2m-k} X^\alpha \left\{ (G_{e_1})^{-k/2}(\delta_sx, \, \delta_s y) \right\}
\\ & = s^{n+2m} (X^\alpha (G_{e_1})^{-k/2})(\delta_sx, \, \delta_s y)
\\ &= s^{n+2m} R_{\alpha,e_1}(\delta_sx, \, \delta_s y),
\end{align*}
where we have made use of the fundamental fact that all the vector fields $X_j$ and $X_{j,k}$ are homogeneous of degree $1$ corresponding to the non-isotropic dilations. 

\medskip 
As a consequence, we have 
\begin{equation} \label{rel:dilation-effect}
R_{\alpha,se_1}f(x) = R_{\alpha,e_1} F(\delta_sx), 
\end{equation} 
where $f(x)=F(\delta_sx)$. 

\medskip 
The relation \eqref{rel:dilation-effect} implies that for any $1 \leq p < \infty$, we have 
\begin{align} \label{rel:dilation-effect-op-norm}
\|R_{\alpha,se_1}\|_{L^{p}(d\mu_{se_1}) \to L^{p}(d\mu_{se_1})} &= \|R_{\alpha,e_1}\|_{L^{p}(d\mu_{e_1}) \to L^{p}(d\mu_{e_1})} \\ 
\nonumber \text{and} \quad 
\|R_{\alpha,se_1}\|_{L^{p}(d\mu_{se_1}) \to L^{p,\infty}(d\mu_{se_1})} 
&= \|R_{\alpha,e_1}\|_{L^{p}(d\mu_{e_1}) \to L^{p,\infty}(d\mu_{e_1})}.
\end{align}

Given a drift vector $a$, we can just choose $s= |a|$, and then in view of the relations given by \eqref{rel:dilation-effect-op-norm}, it shall suffice to work with the drift vector $e_1$. 


\section{Regularized Riesz transforms and a transference principle} 
\label{sec:Reg-Riesz-transforms-transference-principle}

Motivated by the approach developed in \cite{Riesz_transform_Robinson}, in this section we define the regularized Riesz transforms associated with $G_{e_1}$ on $\R^{n+m}$ and $\mathcal{\tilde{L}}_{e_1}$ on $\h_{n,m}$. It can be shown that the weak and strong $L^p$-norms of the regularized Riesz transforms for $\mathcal{\tilde{L}}_{e_1}$ are uniformly bounded by that of the corresponding Riesz transforms.

\medskip 
The main reason to define these regularized Riesz transforms is that they can be expressed in the integral forms with integrable kernels (see Proposition \ref{prop:reg-Riesz-intergral-form} and Lemma \ref{prop:integrability_of_kernel}), on which we can apply a transference principle (see proposition \ref{prop:transference}) motivated by the one from \cite{Transference_methods_in_analysis}. Using these results, we prove Theorems \ref{thm:p_bdd} and \ref{thm:1_bdd-positive} in Subsection \ref{subsec:proofs-thms-positive}.


\subsection{Regularized Riesz transforms}
\label{subsec:Reg-Riesz-transforms}

Given $0 < \epsilon, \delta < 1$, let us consider the regularized Riesz transforms associated with $G_{e_1}$, which are defined as follows: 
$$
R_{\alpha,e_1,\epsilon,\delta} = X^\alpha
(\delta I +G_{e_1})^{-k/2} (I+\epsilon G_{e_1})^{-N},
$$
for $\alpha \in \mathbb{N}^{n} \times \mathbb{N}^{nm}$ such that $|\alpha| = k$. Here, $N$ is a large natural number which will be chosen later as per the requirement, and we shall not keep $N$ in the index of various notations. 

\medskip 
Similarly, on the Heisenberg-Reiter group, we define the regularized Riesz transforms associated with $\mathcal{\tilde{L}}_{e_1}$ by 
$$
\tilde{R}_{\tilde{\alpha},e_1,\epsilon,\delta} = \tilde{X}^{\tilde{\alpha}} (\delta I +\mathcal{\tilde{L}}_{e_1})^{-k/2} (I+\epsilon \mathcal{\tilde{L}}_{e_1})^{-N},
$$
where $\tilde{\alpha} = (\alpha_{n+1}, \ldots, \alpha_{nm}, \, \alpha_1, \ldots, \alpha_n)$ whenever $\alpha = (\alpha_1, \ldots, \alpha_n, \, \alpha_{n+1}, \ldots, \alpha_{nm})$. 

\medskip 
It turns out that the operator norm of each of the regularized Riesz transforms $\tilde{R}_{\tilde{\alpha},e_1,\epsilon,\delta}$ is bounded by the operator norm of the corresponding Riesz transform $\tilde{R}_{\tilde{\alpha},e_1}$. More precisely, 
\begin{proposition} 
\label{prop:Relation_between_two_riesz_gp_transforms}
For any $N \geq 1$ and $1 \leq p < \infty$, there exists a constant $C_p > 0$ (depending also on $k$, $N$, $m$ and $n$) such that 
\begin{align*}
\|\tilde{R}_{\tilde{\alpha},e_1, \epsilon,\delta}\|_{L^{p}(d\mu_{e_1}') \to L^{p}(d\mu_{e_1}')} & \leq C_p \, \|\tilde{R}_{\tilde{\alpha},e_1}\|_{L^{p}(d\mu_{e_1}') \to L^{p}(d\mu_{e_1}')}, \\ 
\text{and} \quad \|\tilde{R}_{\tilde{\alpha},e_1, \epsilon,\delta}\|_{L^{p}(d\mu_{e_1}') \to L^{p,\infty}(d\mu_{e_1}')} & \leq C_p \, \|\tilde{R}_{\tilde{\alpha},e_1}\|_{L^{p}(d\mu_{e_1}') \to L^{p,\infty}(d\mu_{e_1}')}.
\end{align*}
\end{proposition}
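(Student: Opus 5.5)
Write $\mathcal{L} = \mathcal{\tilde{L}}_{e_1}$. The plan is to factor the regularized Riesz transform as the Riesz transform followed by an operator that is uniformly bounded. Since $\mathcal{L}$ is self-adjoint and positive on $L^2(d\mu_{e_1}')$, functional calculus gives
$$
\tilde{R}_{\tilde{\alpha},e_1,\epsilon,\delta} = \tilde{X}^{\tilde{\alpha}} \mathcal{L}^{-k/2} \, m_{\epsilon,\delta}(\mathcal{L}) = \tilde{R}_{\tilde{\alpha},e_1} \, m_{\epsilon,\delta}(\mathcal{L}),
\qquad
m_{\epsilon,\delta}(\lambda) = \Bigl(\frac{\lambda}{\delta+\lambda}\Bigr)^{k/2} (1+\epsilon\lambda)^{-N}.
$$
It then suffices to show that $m_{\epsilon,\delta}(\mathcal{L})$ is bounded on $L^p(d\mu_{e_1}')$, $1\le p<\infty$, with norm bounded independently of $\epsilon$ and $\delta$. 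For the strong-type bound one simply composes. For the weak-type bound, $\|T S f\|_{L^{p,\infty}} \le \|T\|_{L^p\to L^{p,\infty}} \|S\|_{L^p\to L^p}\|f\|_p$, so composing again gives the claim.

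For the uniform $L^p$ bound, including $p=1$, we use the diffusion structure. Recall that $(e^{-t\mathcal{L}})$ is a symmetric diffusion semigroup; in particular it is contractive on every $L^p(d\mu_{e_1}')$. Write
$$
(1+\epsilon \mathcal{L})^{-N} = \frac{1}{\Gamma(N)}\int_0^\infty s^{N-1}e^{-s}e^{-\epsilon s\mathcal{L}}\,ds,
$$
which is a contraction on $L^p$ for every $\epsilon$. For the factor $\bigl(\mathcal{L}/(\delta+\mathcal{L})\bigr)^{k/2} = \bigl(I - \delta(\delta+\mathcal{L})^{-1}\bigr)^{k/2}$, we expand $(1-z)^{k/2} = \sum_j c_j z^j$ with $\sum_j |c_j| < \infty$. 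This expansion is valid because the binomial coefficients of $(1-z)^{\beta}$ with $\beta>0$ are absolutely summable; the sum equals $2$ for $0<\beta<1$, and for integer $\beta$ it is finite. We apply it with $z = \delta(\delta+\mathcal{L})^{-1} = \int_0^\infty \delta e^{-\delta t} e^{-t\mathcal{L}}\,dt$, which is a positive $L^p$-contraction. The series then converges in operator norm on $L^p$, with norm at most $\sum_j|c_j| =: C_k$, independent of $\delta$. It coincides with the functional-calculus operator on $L^2\cap L^p$, since spectral calculus on $L^2$ agrees with the power series there. Hence $\|m_{\epsilon,\delta}(\mathcal{L})\|_{L^p\to L^p}\le C_k$.

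The main point of care is justifying the factorization on $L^p$, not just on $L^2$. The Riesz transform is a priori defined on a dense class, and the identity $\tilde{R}_{\tilde{\alpha},e_1,\epsilon,\delta} f = \tilde{R}_{\tilde{\alpha},e_1}(m_{\epsilon,\delta}(\mathcal{L})f)$ holds for $f\in L^2\cap L^p$. The operators $m_{\epsilon,\delta}(\mathcal{L})$ preserve $L^2\cap L^p$, so the estimate holds on a dense subspace and then extends by density. A subtlety arises with the spectrum near $0$, where $\mathcal{L}^{-k/2}$ is unbounded. This is harmless here: $\mathcal{L}$ has spectrum bounded below by $|\nu|^2=1$ (via the conjugation $\mathcal{L} = U(\mathcal{\tilde{L}}+I)U^{-1}$, analogous to \eqref{eq:Relation_between_Grushin_and_drift_Grushin}), so $\mathcal{L}^{-k/2}$ is bounded on $L^2$. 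Combining the two steps gives the proposition with $C_p = C_k$, which in fact does not depend on $p$.
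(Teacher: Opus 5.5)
Your proposal is correct and is essentially the argument the paper has in mind: the paper omits the proof and defers to Robinson--Sikora (p.~465), where the regularized transform is factored as $\tilde{R}_{\tilde{\alpha},e_1}$ composed with the multiplier $(\mathcal{L}/(\delta+\mathcal{L}))^{k/2}(I+\epsilon\mathcal{L})^{-N}$, and that multiplier is bounded on $L^p$, $1\le p<\infty$, uniformly in $\epsilon,\delta$ because the diffusion semigroup is $L^p$-contractive. Your binomial-series treatment of the fractional power, the observation that $\mathcal{L}\ge|\nu|^2=1$ makes the $L^2$ factorization legitimate, and the density/consistency remarks correctly supply the details the paper leaves out; your bound also gives a constant independent of $p$, $N$, $\epsilon$ and $\delta$.
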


We omit the proof of the above proposition as it follows by repeating the arguments (almost verbatim) of page 465 of \cite{Riesz_transform_Robinson}. 

\medskip 
We shall now show that the regularized Riesz transforms are integral operators. For the same, let us consider 
\begin{equation}
\label{def:t-integral-constant}
B_{\epsilon, \delta, k}(t) = e^{-t} \int_{0}^t h^{N-1} \, e^{-h/\epsilon} \, (t-h)^{\frac{k}{2}-1} \, e^{-\delta (t-h)} \, dh, 
\end{equation}
and define the kernel 
\begin{equation}
\label{eq:k_epsilon_delta}
K_{\epsilon, \delta}(g) = \sum_{\substack{\tilde{\gamma} \leq \tilde{\alpha}\\ \gamma_j = \alpha_j \forall j \neq 1}} M_\gamma \, e^{-v_1} \int_{0}^\infty B_{\epsilon, \delta, k}(t) \, \tilde{X}^{\tilde{\gamma}} p_t(g) \, dt,
\end{equation} 
where $p_t$ denotes the heat kernel of the Heisenberg-Reiter group, and we have used the notations $g = (u,v,s) \in \h_{n,m}$ and the constant $\displaystyle M_\gamma = \binom{\alpha_1}{\gamma_1} \, \frac{(-1)^{\alpha_1 -\gamma_1} \,  \epsilon^{-N} }{\Gamma(N)\Gamma(k/2)}.$ 

\begin{proposition} 
\label{prop:reg-Riesz-intergral-form}
The regularized Riesz transforms $R_{\alpha,e_1,\epsilon,\delta}$ and $\tilde{R}_{\tilde{\alpha},e_1,\epsilon,\delta}$ satisfy 
\begin{equation} 
R_{\alpha,e_1,\epsilon,\delta} = \int_{\h_{n,m}} K_{\epsilon, \delta}(g) \, \sigma_{g^{-1}} \,  dg \quad \text{and} \quad \tilde{R}_{\tilde{\alpha},e_1,\epsilon,\delta} = \int_{\h_{n,m}} K_{\epsilon, \delta}(g) \, R(g^{-1}) \, dg
\end{equation}
where $R$ denotes the right translation defined as $R(g') f(g) = f(g g')$ and $\sigma$ is the unitary representation of $\h_{n,m}$ on $L^2(\R^{n+m})$ as in \eqref{eq:sigma_of_f}. 
\end{proposition}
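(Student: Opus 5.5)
The plan is to derive both identities from one scalar subordination formula, passed through the functional calculus. This uses two facts: the conjugation $\mathcal{\tilde{L}}_{e_1} = U(\mathcal{\tilde{L}}+I)U^{-1}$ on the group, where $Uf(u,v,s) = e^{-v_1}f(u,v,s)$ (this is what the heat-kernel formula $p_{t,\nu}$ encodes), and its Grushin analogue \eqref{eq:Relation_between_Grushin_and_drift_Grushin}, $G_{e_1} = U_{e_1}(G+I)U_{e_1}^{-1}$. The computation on the group and on $\R^{n+m}$ is the same, with $R(g^{-1})$ replaced by $\sigma_{g^{-1}}$ and $\tilde X$ by $X = d\sigma(\tilde X)$ via \eqref{eq:d_sigma}. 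So I would do it in parallel.

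\textbf{Step 1 (subordination).} For $\lambda \geq 0$, write
\[
(\delta+\lambda)^{-k/2}(1+\epsilon\lambda)^{-N} = \frac{\epsilon^{-N}}{\Gamma(k/2)\Gamma(N)}\int_0^\infty\!\!\int_0^\infty s^{\frac k2 -1}e^{-\delta s}\,h^{N-1}e^{-h/\epsilon}\,e^{-(s+h)\lambda}\,ds\,dh .
\]
Substituting $t = s+h$ turns this into $\frac{\epsilon^{-N}}{\Gamma(k/2)\Gamma(N)}\int_0^\infty e^{t}B_{\epsilon,\delta,k}(t)\,e^{-t\lambda}\,dt$. The functional calculus then gives
\[
(\delta I+\mathcal{\tilde{L}}_{e_1})^{-k/2}(I+\epsilon\mathcal{\tilde{L}}_{e_1})^{-N} = c\int_0^\infty e^{t}B_{\epsilon,\delta,k}(t)\,e^{-t\mathcal{\tilde{L}}_{e_1}}\,dt,
\]
and likewise for $G_{e_1}$.

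\textbf{Step 2 (removing the drift).} By the conjugation, $e^{-t\mathcal{\tilde{L}}_{e_1}} = e^{-t}\,U e^{-t\mathcal{\tilde{L}}} U^{-1}$. The extra factor $e^{-t}$ cancels the $e^{t}$ from Step 1, which is why $B_{\epsilon,\delta,k}$ carries the factor $e^{-t}$. Since $e^{-t\mathcal{\tilde{L}}}f = f*p_t = \int p_t(h)\,R(h^{-1})f\,dh$ and the $v$-coordinate is additive under the group law, we get
\[
Ue^{-t\mathcal{\tilde{L}}}U^{-1}f(g) = \int e^{-v_1(g)}e^{v_1(gh^{-1})}p_t(h)f(gh^{-1})\,dh = \int e^{-v_1(h)}p_t(h)\,R(h^{-1})f(g)\,dh .
\]
On $\R^{n+m}$ the same computation holds with $\sigma_{h^{-1}}$, which shifts $x'$ by $-v$. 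The weight $e^{-x_1'}$ then produces the identical factor $e^{-v_1(h)}$, and $e^{-tG} = \sigma(p_t)$ by \eqref{eq:relation_between_heat_kernel_of_Grushin_and_Reiter_group}.

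\textbf{Step 3 (the derivatives).} Apply $\tilde X^{\tilde\alpha}$ to $U(\cdots)$. Only $\tilde Y_1$ differentiates in $v_1$; the fields $\tilde X_{j,k}$ and the $\tilde Y_j$ with $j\ne1$ do not see $v_1$. Hence $\tilde Y_1 U = U(\tilde Y_1 - I)$ and $\tilde X_{j,k}U = U\tilde X_{j,k}$. The identity commutes with everything, so expanding $(\tilde Y_1 - I)^{\alpha_1}$ binomially is legitimate even though the fields do not commute. This yields the sum over $\tilde\gamma\le\tilde\alpha$ with $\gamma_j = \alpha_j$ for $j\neq 1$, with coefficients $\binom{\alpha_1}{\gamma_1}(-1)^{\alpha_1-\gamma_1}$. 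Left-invariant fields pass through the right-convolution structure, because $\tilde X^{\tilde\gamma}(f*p_t) = f*(\tilde X^{\tilde\gamma}p_t)$. On the Grushin side one uses $X = d\sigma(\tilde X)$ to move the derivatives onto $\sigma(p_t)$. Collecting constants gives exactly $M_\gamma$ and the kernel $K_{\epsilon,\delta}$, and therefore both displayed identities.

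The main obstacle will be justifying the interchanges (derivatives with the $t$- and $h$-integrals, and Fubini), given that the weight $e^{-v_1}$ grows exponentially. I would first prove the identities weakly, pairing against $f, \varphi \in C_c^\infty$. Near $t=0$, $B_{\epsilon,\delta,k}(t) = O(t^{N+k/2-1})$, and the Gaussian gradient bounds \eqref{est:gradient-group-heat-kernel-bounds} control $\tilde X^{\tilde\gamma}p_t$; so for $N$ large, and with compact supports absorbing $e^{-v_1}$, all the integrals converge absolutely. Extending to the full operator statement then uses the integrability of $K_{\epsilon,\delta}$, which is deferred to Lemma \ref{prop:integrability_of_kernel}.
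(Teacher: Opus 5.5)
Your proposal is correct and follows essentially the same route as the paper. The ingredients match: the subordination formula producing $B_{\epsilon,\delta,k}$; removal of the drift via $p_{t,e_1}(g,g') = e^{-t}e^{-v_1-v_1'}p_t(g'^{-1}g)$, which is your conjugation by $U$; the binomial expansion from $\tilde Y_1$ (resp.\ $\partial_{x_1'}$) hitting $e^{-v_1}$; and transfer to $\R^{n+m}$ through $\sigma$. The only difference is packaging: you work at the operator level and move derivatives onto $p_t$ using left-invariance and $X = d\sigma(\tilde X)$, whereas the paper matches explicit integral kernels and passes through the right-invariant identity \eqref{eq:vector_fields_and_sigma}.
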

\begin{proof}
Let us first consider the operator $ (\delta I +\mathcal{\tilde{L}}_{e_1})^{-k/2} (I+\epsilon \mathcal{\tilde{L}}_{e_1})^{-N}$. 
The kernel of this operator is given by (as in Section 2 of \cite{Lp_regularity}) 
\begin{equation} \label{identity:kernel-form-regularised-potential}
\frac{\epsilon^{-N}}{\Gamma(N)\Gamma(k/2)} \int_0^\infty \int_0^t p_{t,e_1}(g,g') \, h^{N-1} \, e^{-h/\epsilon} \, (t-h)^{\frac{k}{2}-1} \, e^{-\delta (t-h)} \, dh \, dt. 
\end{equation}

In view of \eqref{identity:kernel-form-regularised-potential}, we have 
\begin{align*}
&\tilde{R}_{\tilde{\alpha},e_1,\epsilon,\delta} f(g) = \int_{\h_{n,m}} \tilde{R}_{\tilde{\alpha},e_1,\epsilon,\delta}(g,g') \, f(g') \, e^{2v'_1} \, dg'
\\ & = \int_{\h_{n,m}} \left\{ \frac{\epsilon^{-N}}{\Gamma(N)\Gamma(k/2)} \int_0^\infty \int_0^t \tilde{X}^{\tilde{\alpha}} p_{t,e_1}(g,g') \, h^{N-1} \, e^{-h/\epsilon} (t-h)^{\frac{k}{2}-1} \, e^{-\delta (t-h)} \, dh \, dt \right\} f(g') \, e^{2v'_1} \, dg'
\\ & = \sum_{\substack{\tilde{\gamma} \leq \tilde{\alpha}\\ \gamma_j = \alpha_j \, \forall j \neq 1}} M_\gamma \int_{\h_{n,m}} e^{-v_1-v'_1} \left\{ \int_0^\infty B_{\epsilon, \delta, k}(t) \, (\tilde{X}^{\tilde{\gamma}} p_{t})(g'^{-1}g) \, dt \right\} f(g') \, e^{2v'_1} \, dg'
\\ & = \sum_{\substack{\tilde{\gamma} \leq \tilde{\alpha}\\ \gamma_j = \alpha_j \, \forall j \neq 1}} M_\gamma \int_{\h_{n,m}} e^{-v'_1} \left\{ \int_0^\infty B_{\epsilon, \delta, k}(t) \, (\tilde{X}^{\tilde{\gamma}} p_{t})(g') \, dt \right\} f(gg'^{-1}) \, dg'
\\ & = \int_{\h_{n,m}} K_{\epsilon, \delta}(g') \, R(g'^{-1})f(g) \, dg', 
\end{align*}
which completes the part of the claim for the operator $\tilde{R}_{\tilde{\alpha},e_1,\epsilon,\delta}$.  

\medskip 
On the other hand, let us consider 
\begin{align*}
& \int_{\h_{n,m}} K_{\epsilon, \delta}(g) \, \sigma_{g^{-1}} F(x',x'') \, dg \\ 
& = \sum_{\substack{\tilde{\gamma} \leq \tilde{\alpha} \\ \gamma_j = \alpha_j \, \forall j \neq 1}} M_\gamma \int_{\h_{n,m}} e^{-v_1} \left\{ \int_{0}^\infty B_{\epsilon, \delta, k}(t) \, (\tilde{X}^{\tilde{\gamma}} p_t)(g) \, dt \right\} \sigma_{g^{-1}} F(x',x'') \, dg 
\\ & = \sum_{\substack{\tilde{\gamma} \leq \tilde{\alpha} \\ \gamma_j = \alpha_j \, \forall j \neq 1}} M_\gamma \int_{\h_{n,m}} e^{-v_1} \left\{\int_{0}^\infty B_{\epsilon, \delta, k}(t) \, (\tilde{X}^{\tilde{\gamma}} p_t)(g) \, dt \right\} \\ 
& \qquad \qquad \qquad \qquad \qquad  F\left(x'-v, \, x''- s -u^Tx'+\frac{u^Tv}{2}\right)  \, du \, dv \, ds  \\ 
&=  \sum_{\substack{\tilde{\gamma} \leq \tilde{\alpha}\\ \gamma_j = \alpha_j \, \forall j \neq 1}} M_\gamma \int_{\h_{n,m}}  e^{v_1-x'_1} \left\{ \int_{0}^\infty B_{\epsilon, \delta, k}(t) \, (\tilde{X}^{\tilde{\gamma}} p_t) \left(u, x'-v, x''- s - \frac{u^Tx'}{2}-\frac{u^Tv}{2} \right) dt \right\} \\ 
& \qquad \qquad \qquad \qquad \qquad F(v,s) \, du \, dv \, ds \\ 
&= \sum_{\substack{\tilde{\gamma} \leq \tilde{\alpha}\\ \gamma_j = \alpha_j \, \forall j \neq 1}} M_\gamma \int_{\R^{n+m}} e^{-v_1-x'_1} \left\{ \int_{\R^{nm}} \int_{0}^\infty (\tilde{X}^{\tilde{\gamma}} p_t) \left(u,x'-v ,x''- s -\frac{u^Tx'}{2}-\frac{u^Tv}{2}\right) \right. \\ 
& \qquad \qquad \qquad \qquad \qquad \left. B_{\epsilon, \delta, k}(t) \, dt \, du \right\} F(v,s) \, e^{2v_1} \, dv \, ds, 
\end{align*}
and
\begin{align*}
& R_{\alpha,e_1,\epsilon,\delta} ((x',x''),(v,s)) \\
& = \frac{\epsilon^{-N}}{\Gamma(N) \Gamma (k/2)} \int_{0}^\infty \int_{0}^t  X^\alpha H_{t,e_1}((x',x''),(v,s)) \, h^{N-1} \, e^{-h/\epsilon} \, (t-h)^{\frac{k}{2}-1} \, e^{-\delta (t-h)} \, dh \, dt \\ 
& = \sum_{\substack{\gamma \leq \alpha\\ \gamma_j = \alpha_j \, \forall j \neq 1}} M_\gamma \, e^{-v_1-x'_1} \int_{0}^\infty B_{\epsilon, \delta, k}(t) \, X^\gamma H_{t}((x',x''),(v,s)) \, dt \\ 
& = \sum_{\substack{\tilde{\gamma} \leq \tilde{\alpha}\\ \gamma_j = \alpha_j \, \forall j \neq 1}} (-1)^{|\gamma|} M_\gamma \, e^{-v_1-x'_1} \int_{0}^\infty \left\{ \int_{\R^{nm}} ((\tilde{X}^{R})^{\tilde{\gamma}} p_t) \left(u, v-x', s -x''-\frac{u^Tx'}{2}-\frac{u^Tv}{2}\right) du \right\} \\ 
& \qquad \qquad \qquad \qquad \qquad \qquad \qquad B_{\epsilon, \delta, k}(t) \, dt \\ 
& = \sum_{\substack{\tilde{\gamma} \leq \tilde{\alpha}\\ \gamma_j = \alpha_j \, \forall j \neq 1}} M_\gamma \, e^{-v_1-x'_1} \int_{0}^\infty \left\{ \int_{\R^{nm}} (\tilde{X}^{\tilde{\gamma}} p_t) \left(u,x'-v,x''-s -\frac{u^Tx'}{2}-\frac{u^Tv}{2}\right) du \right\} B_{\epsilon, \delta, k}(t) \, dt,
\end{align*}
where we used \eqref{eq:vector_fields_and_sigma} in the third step. 

\medskip 
The above two identities together imply that 
$$
R_{\alpha,e_1,\epsilon,\delta} F(x) =\int_{\h_{n,m}} K_{\epsilon, \delta}(g) \, \sigma_{g^{-1}} F(x) \,  dg,
$$
which gives us the required integral forms. 
\end{proof}

Next, we shall prove that the kernel $K_{\epsilon, \delta}$ given in \eqref{eq:k_epsilon_delta} is integrable. This will be useful when we invoke the transference principle which we shall discuss in the next subsection. It was observed in \cite{Riesz_transform_Robinson} (see the discussion around inequality (4) in \cite{Riesz_transform_Robinson}) that their kernel $k_{\alpha; \nu, \epsilon} \in L^1(G)$. As mentioned there, the term $(I + \epsilon H)^{-n}$ helps in the integrability at the identity whereas the introduction of the $\nu$-term provides the integrability at infinity. In our context too, the proof of the fact that $K_{\epsilon, \delta} \in L^1(\h_{n,m}, \,e^{2v_1/p} \, dg)$, for any $1 \leq p < \infty$, is elementary and for the sake of convenience to readers we provide the details. 

\begin{lemma} \label{prop:integrability_of_kernel}
Let $K_{\epsilon, \delta}$ be as in \eqref{eq:k_epsilon_delta} with $N \geq \frac{Q}{2} + 1$ and $0 < \epsilon, \delta < 1$. Then, for any $1 \leq p < \infty$, we have $K_{\epsilon, \delta} \in L^1(\h_{n,m}, \,e^{2v_1/p} \, dg)$. 
\end{lemma}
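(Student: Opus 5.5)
We need to show that $\int_{\h_{n,m}} |K_{\epsilon,\delta}(g)|\, e^{2v_1/p}\,dg<\infty$. Since $K_{\epsilon,\delta}$ is a finite sum, it suffices to treat a single term. Each term has the weight $e^{-v_1}$, so the task is to bound
$$
\int_0^\infty B_{\epsilon,\delta,k}(t) \int_{\h_{n,m}} |\tilde X^{\tilde\gamma} p_t(g)|\, e^{(2/p-1)v_1}\,dg\,dt .
$$
Note that $|(2/p-1)v_1|\le |v_1|\le \varrho(g)$, because the projection $(u,v,s)\mapsto v$ is $1$-Lipschitz from the Carnot--Carath\'eodory distance to the Euclidean one. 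We therefore only need control of $\int |\tilde X^{\tilde\gamma}p_t(g)|\, e^{\varrho(g)}\,dg$.

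\textbf{Step 1: the inner integral.} Using \eqref{est:gradient-group-heat-kernel-bounds} with, say, $\kappa=1$, and homogeneity $V(r)=c\,r^Q$, substitute $g=\delta_{\sqrt t}g_0$. This gives
$$
\int |\tilde X^{\tilde\gamma}p_t(g)|\,e^{\varrho(g)}\,dg \lesssim t^{-|\gamma|/2}\int e^{-\varrho(g_0)^2/5+\sqrt t\,\varrho(g_0)}\,dg_0 .
$$
Completing the square, $-\rho^2/5+\sqrt t\rho\le -\rho^2/10+5t/2$. Since $\int e^{-\varrho(g_0)^2/10}dg_0<\infty$ by polynomial volume growth, the inner integral is $\lesssim t^{-|\gamma|/2}e^{5t/2}$.

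\textbf{Step 2: bounds on $B_{\epsilon,\delta,k}$.} This step is the main obstacle: we must check that the $t$-integral converges both near $0$ and near $\infty$. A crude factor $e^{5t/2}$ is not acceptable against $e^{-t}$ from \eqref{def:t-integral-constant} alone.

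\emph{At infinity.} The $e^{5t/2}$ came from the loose Gaussian constant. We redo Step 1 with $\kappa$ small, using $|v_1|\le \varrho$ directly:
$$
-\frac{\rho^2}{(4+\kappa)t}+\Bigl|1-\frac{2}{p}\Bigr|\rho \le -\frac{\rho^2}{2(4+\kappa)t}+\frac{(4+\kappa)t}{2}\Bigl(1-\frac2p\Bigr)^2 .
$$
This is still too large compared with $e^{-t}$. The way out is the sharp fact behind \eqref{eq:Relation_between_Grushin_and_drift_Grushin}: $e^{-t}e^{-v_1}p_t$ is a probability-type kernel, so that $\int p_t(g)e^{\pm v_1}dg = e^{t}$ (the exponential moment of the Brownian motion in $\tilde Y_1$). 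We will use this for $\gamma=0$. For derivatives, we write $p_t=p_{t/2}*p_{t/2}$ and put the derivatives on one factor, giving $\int|\tilde X^{\gamma}p_t|e^{c v_1}\le \|\tilde X^\gamma p_{t/2}\,e^{cv_1}\|_1\, e^{c^2 t/2}$-type bounds. With $c=2/p-1\in(-1,1]$ this yields growth $e^{c^2 t}\le e^{t}$, while $B_{\epsilon,\delta,k}(t)\lesssim e^{-t}e^{-\delta t}t^{k/2-1}$ for large $t$, because $\int_0^\infty h^{N-1}e^{-h/\epsilon}e^{\delta h}dh<\infty$. The extra $e^{-\delta t}$ then gives convergence. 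This is exactly where $\delta>0$ is needed, and for $p=1$ ($c=1$) it is the borderline case.

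\emph{Near $0$.} Since $e^{-h/\epsilon}\le1$, we have $B_{\epsilon,\delta,k}(t)\lesssim t^{N-1}\cdot t^{k/2}$. The factor $t^{-|\gamma|/2}\ge t^{-k/2}$ from Step 1 then gives an integrand $\lesssim t^{N-1}$, which is integrable. The hypothesis $N\ge Q/2+1$ gives ample room, in case pointwise rather than integrated bounds are used.

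Combining the two regimes with the finiteness of the sum completes the argument.
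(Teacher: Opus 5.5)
There is a gap at large $t$ for $\gamma\neq0$. Your small-$t$ analysis is fine. So is your large-$t$ treatment of the undifferentiated term: the identity $\int p_t\,e^{cv_1}\,dg=e^{c^2t}$ is correct, e.g. from the Markov property of $e^{-t\tilde{\mathcal L}_\nu}$.

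The convolution step correctly gives $\int|\tilde X^{\gamma}p_t|\,e^{cv_1}\,dg\le e^{c^2t/2}\,\|\tilde X^{\gamma}p_{t/2}\,e^{cv_1}\|_{1}$. To conclude ``growth $e^{c^2t}$'', however, you need $\|\tilde X^{\gamma}p_{t/2}\,e^{cv_1}\|_{1}\lesssim \mathrm{poly}(t)\,e^{(c^2/2+\theta)t}$ with $\theta<\delta$, and nothing in the proposal supplies this. The only bounds you have for that factor are your Step 1, which at time $t/2$ gives $e^{5t/4}$, and your halved-Gaussian estimate, which at time $t/2$ gives $e^{(1+\kappa/4)c^2t}$. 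For $p=1$ the total growth is then $e^{7t/4}$ or $e^{(3/2+\kappa/4)t}$, and $B_{\epsilon,\delta,k}(t)\approx e^{-(1+\delta)t}$ cannot absorb either. Splitting at $t/2$ merely moves the difficulty onto the derivative factor.

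There are two easy repairs.

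\textbf{Fixed-time split.} Within your framework, for $t\ge1$ write $\tilde X^{\gamma}p_t=p_{t-1}*\tilde X^{\gamma}p_1$. Then $\int|\tilde X^{\gamma}p_t|\,e^{cv_1}\,dg\le e^{c^2(t-1)}\|\tilde X^{\gamma}p_1e^{cv_1}\|_1$, and the last norm is a finite constant by \eqref{est:gradient-group-heat-kernel-bounds} with any $\kappa$. Since $c^2\le1$, this converges against $e^{-(1+\delta)t}$. This route differs from the paper's and never uses the sharpness of the constant $4$.

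\textbf{Smaller Gaussian reserve.} The loss comes from reserving half of the Gaussian exponent for the $g$-integration. Reserving only a small fraction $\vartheta$ gives $e^{c^2(4+\kappa)t/(4(1-\vartheta))}\le e^{(1+\delta/2)t}$ for small $\kappa,\vartheta$, and no convolution is needed. This is essentially the paper's argument. The paper writes $e^{-t}e^{\varrho}e^{-\varrho^2/((2+\eta)^2t)}=e^{\eta\varrho/(2+\eta)}e^{-\frac1t(\frac{\varrho}{2+\eta}-t)^2}$, splits the $t$-integral at $t=\varrho(g)/(2(2+\eta))$, and takes $\delta>4\eta$. There the $t$-integral is done before the $g$-integral, which is where $N\ge Q/2+1$ enters.

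A minor point: near $0$ you need $t^{-|\gamma|/2}\le t^{-k/2}$ for $t\le1$, not $\ge$.
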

\begin{proof}
Recall from \eqref{eq:k_epsilon_delta} that $K_{\epsilon, \delta}$ can be written as a finite sum with index $\tilde{\gamma}$ such that $\tilde{\gamma} \leq \tilde{\alpha}$ and $\gamma_j = \alpha_j$ for all $j \neq 1$. For technical convenience, in all the computations that follow, we shall always take one arbitrary term in $\tilde{\gamma}$, and from the proof it shall be clear that there is no loss in generality. Also, we shall make use of the bounds given by \eqref{est:gradient-group-heat-kernel-bounds} for a small $0 < \eta < 1/2$ with $(2+\eta)^2 = 4+\kappa$ such that $\delta - 4 \eta > 0$. We shall also make use of the notation $B(r,s)$ to denote the standard beta function $\int_0^1 h^{r-1} \, (1-h)^{s-1} \, dh$ which is obviously finite for all $r, s > 0$. 

\medskip 
We have 
\begin{align*} 
& \int_{\h_{n,m}} |K_{\epsilon, \delta}(g)| e^{2v_1/p} \, dg \\ 
\nonumber & \lesssim_{k,N,\epsilon} \int_{\h_{n,m}} \left| \int_{0}^\infty \int_{0}^t e^{-t} \, e^{-v_1} \, \tilde{X}^{\tilde{\gamma}} p_t(g) \, h^{N-1} \, e^{-h/\epsilon} \, (t-h)^{\frac{k}{2}-1} \, e^{-\delta (t-h)} \, dh \, dt\right| e^{2v_1/p} \, dg \\ 
& \lesssim_{\eta} \int_{\h_{n,m}} \int_{0}^\infty e^{-t} \, e^{\left( \frac{2}{p} - 1 \right)v_1} \, t^{-\frac{|\tilde{\gamma}|}{2}-\frac{Q}{2}} \, e^{-\frac{\varrho(g)^2}{(2+\eta)^2 t}} \, e^{-\delta t} \left( \int_{0}^t h^{N-1} \, (t-h)^{\frac{k}{2}-1} \, dh \right) dt \, dg \\ 
& \leq B(N,k/2) \int_{\h_{n,m}} \int_{0}^\infty e^{-t} \, e^{\varrho(g)} \, t^{\frac{k}{2}-\frac{|\tilde{\gamma}|}{2}+N-\frac{Q}{2}-1} \, e^{-\frac{\varrho(g)^2}{(2+\eta)^2 t}} \, e^{-\delta t} \, dt \, dg \\ 
& \lesssim_{k,N} \int_{\h_{n,m}} \int_{0}^\infty t^{\frac{k}{2}+N-\frac{|\tilde{\gamma}|}{2}-\frac{Q}{2}-1} \, e^{\frac{\eta \, \varrho(g)}{(2+\eta)}} \, e^{-\frac{1}{t} \left(\frac{\varrho(g)}{(2+\eta)} - t \right)^2} e^{-\delta t} \, dt \, dg \\ 
& = \int_{\h_{n,m}} \int_{0}^{\frac{\varrho(g)}{2(2+\eta)}} t^{\frac{k}{2}+N-\frac{|\tilde{\gamma}|}{2}-\frac{Q}{2}-1} \, e^{\frac{\eta \, \varrho(g)}{(2+\eta)}} \, e^{-\frac{1}{t} \left(\frac{\varrho(g)}{(2+\eta)} - t \right)^2} e^{-\delta t} \, dt \, dg \\
& \quad + \int_{\h_{n,m}} \int_{\frac{\varrho(g)}{2(2+\eta)}}^{\infty} t^{\frac{k}{2}+N-\frac{|\tilde{\gamma}|}{2}-\frac{Q}{2}-1} \, e^{\frac{\eta \, \varrho(g)}{(2+\eta)}} \, e^{-\frac{1}{t} \left(\frac{\varrho(g)}{(2+\eta)} - t \right)^2} e^{-\delta t} \, dt \, dg \\ 
& =: I_1+I_2.
\end{align*}

The analysis of both $I_1$ and $I_2$ is elementary as we show below. 
\begin{align*}
I_1 & = \int_{\h_{n,m}} \int_{0}^{\frac{\varrho(g)}{2(2+\eta)}} t^{\frac{k}{2}+N-\frac{|\tilde{\gamma}|}{2}-\frac{Q}{2}-1} \, e^{\frac{\eta \, \varrho(g)}{(2+\eta)}} \, e^{-\frac{1}{t} \left(\frac{\varrho(g)}{(2+\eta)} - t \right)^2} e^{-\delta t} \, dt \, dg \\ 
& \lesssim \int_{\h_{n,m}} \int_{0}^{\frac{\varrho(g)}{2(2+\eta)}} t^{\frac{k}{2}+N-\frac{|\tilde{\gamma}|}{2}-\frac{Q}{2}-1} \, e^{\frac{\eta \, \varrho(g)}{(2+\eta)}} \, e^{-\frac{\varrho(g)}{2(2+\eta)}} e^{-\delta t} \, dt \, dg \\ 
& = \int_{\h_{n,m}} \int_{0}^{\frac{\varrho(g)}{2(2+\eta)}} t^{\frac{k}{2}+N-\frac{|\tilde{\gamma}|}{2}-\frac{Q}{2}-1} \, e^{-\frac{(1-2 \eta)}{2(2+\eta)} \, \varrho(g)} \, e^{-\delta t} \, dt \, dg \\ 
& < \infty,
\end{align*}
and 
\begin{align*}
I_2 & = \int_{\h_{n,m}} \int_{\frac{\varrho(g)}{2(2+\eta)}}^{\infty} t^{\frac{k}{2}+N-\frac{|\tilde{\gamma}|}{2}-\frac{Q}{2}-1} \, e^{\frac{\eta \, \varrho(g)}{(2+\eta)}} \, e^{-\frac{1}{t} \left(\frac{\varrho(g)}{(2+\eta)} - t \right)^2} e^{-\delta t} \, dt \, dg \\ 
& \leq \int_{\h_{n,m}} \int_{\frac{\varrho(g)}{2(2+\eta)}}^{\infty} t^{\frac{k}{2}+N-\frac{|\tilde{\gamma}|}{2}-\frac{Q}{2}-1} \, e^{\frac{\eta \, \varrho(g)}{(2+\eta)}} \, e^{-\delta t} \, dt \, dg \\ 
& \leq \int_{\h_{n,m}} \int_{\frac{\varrho(g)}{2(2+\eta)}}^{\infty} t^{\frac{k}{2}+N-\frac{|\tilde{\gamma}|}{2}-\frac{Q}{2}-1} \, e^{- \frac{(\delta - 4\eta)}{4(2+\eta)} \varrho(g)} \, e^{-\frac{\delta}{2} t} \, dt \, dg \\ 
& < \infty. 
\end{align*}

Hence, $K_{\epsilon, \delta} \in L^1(\h_{n,m}, \, e^{2v_1/p} \, dg).$ 
\end{proof}

\subsection{A transference principle} 
\label{transference-principle}

In this subsection, we prove a transference result (Proposition \ref{prop:transference}), which will allow us to make use of the results developed in Subsection \ref{subsec:Reg-Riesz-transforms} so that we can get results for the Grushin operator with the help of those for the sub-Laplacian on the Heisenberg-Reiter group $\h_{n,m}$. 

\medskip 
Note also that the measure $d\mu_a (x) = e^{2a\cdot x'} \, dx$ on $\R^{n+m}$ corresponds to the measure $d\mu_{a}' = d\mu_{(a,0)}' = e^{2a\cdot v} \, dg$ on the Heisenberg-Reiter group $\h_{n,m}$. Let us mention that the proof of our result here, that is, Proposition \ref{prop:transference}, is mostly a reproduction of Theorems 2.4 and 2.6 of \cite{Transference_methods_in_analysis}, but since the mentioned results in \cite{Transference_methods_in_analysis} deal only with the Haar measure on certain amenable groups, we have decided to write the detailed proof in our context. 

\begin{proposition} \label{prop:transference}
Let $k \in L^1(\h_{n,m}, \, e^{2 \, a \cdot v/p} \, dg)$ for some $1 \leq p <\infty,$ and $\sigma$ be as in \eqref{eq:sigma_of_f}. For $f \in C_c^\infty(\R^{n+m}),$ let us define
$$ 
Tf(x) = \int_{\h_{n,m}} k(g) \, \sigma_{g^{-1}} f(x) \, dg. 
$$
Also, define for $\phi \in C_c^\infty(\h_{n,m})$,
$$
S\phi(g') = \int_{\h_{n,m}} k(g) \, R(g^{-1}) \, \phi(g') \, dg,
$$
where $R$ denotes the right translation defined as $R(g') f(g) = f(g g')$. Then, the strong (resp. weak) operator norm of $T$ is bounded by the strong (resp. weak) operator norm of $S$. More precisely, we have 
\begin{enumerate}[(i)]
\item $\displaystyle \|T\|_{L^p(\R^{n+m}, \, d\mu_a) \to L^p(\R^{n+m}, \, d\mu_a)} \leq \|S\|_{L^p(\h_{n,m}, \, d\mu'_a)\to L^p(\h_{n,m}, \, d\mu'_a)},$ 

\item $\displaystyle  \|T\|_{L^p(\R^{n+m}, \, d\mu_a) \to L^{p,\infty}(\R^{n+m}, \, d\mu_a)} \leq \|S\|_{L^p(\h_{n,m}, \, d\mu'_a)\to L^{p,\infty}(\h_{n,m}, \, d\mu'_a)},$
\end{enumerate}
where $d\mu_a(x)=e^{2 a \cdot x'} \, dx$ on $\R^{n+m}$ and $d\mu'_a (g)= e^{2 a \cdot v} \, dg$ on $\h_{n,m}$.
\end{proposition}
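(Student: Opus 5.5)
I will follow the Coifman--Weiss transference argument, with the exponential weight carried along. The crucial structural fact is compatibility of $\sigma$ with the weights. Write $\sigma_{h}f(x) = f(\tau_h^{-1}x)$ with $h=(u,v,s)$. Then $\sigma_h f(x',x'') = f(x'+v,\ldots)$, and the $x'$-shift equals the $v$-coordinate of $h$. Hence for $F\in L^p(d\mu_a)$ and fixed $x$, the function $h\mapsto \sigma_h F(x)$ satisfies
$$
e^{2a\cdot(x'+v)}=e^{2a\cdot x'}e^{2a\cdot v},
$$
so that $\mu_a$-weight at the point $\tau_h^{-1}x$ factors as $\mu_a$-weight at $x$ times $\mu'_a$-weight at $h$. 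Moreover, $\sigma$ is a representation: $\sigma_{g^{-1}}\sigma_h=\sigma_{g^{-1}h}$, and $\sigma_h$ is an isometry of $L^p(d\mu_a)$ up to the factor $e^{-2a\cdot v/p}$. Precisely, $\|\sigma_h f\|_{L^p(d\mu_a)}=e^{-2a\cdot v/p}\|f\|_{L^p(d\mu_a)}$.

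\textbf{Step 1 (truncation).} By density, take $f\in C_c^\infty$, and first assume $k$ has compact support $K\subset\h_{n,m}$. The general case follows since $\|T_k f\|_{L^p(d\mu_a)}\le \|k\|_{L^1(e^{2a\cdot v/p}dg)}\|f\|_{L^p(d\mu_a)}$ by Minkowski and the isometry relation above. Truncation approximation then passes to both the strong and the weak norms, because $S_k$ is controlled by the same $L^1$ norm (right translations obey the same weight identity, since the $v$-coordinate of $g'g^{-1}$ is $v'-v$).

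\textbf{Step 2 (transfer).} Let $V$ be a large compact neighbourhood of the identity, e.g. a ball of the homogeneous norm. For $h\in V$ write
$$
\sigma_h Tf(x)=\int k(g)\,\sigma_{hg^{-1}}f(x)\,dg = S\phi_x(h),
$$
where $\phi_x(g')=\mathbf 1_{VK^{-1}}(g')\,\sigma_{g'}f(x)$. Integrating over $h\in V$ with respect to $d\mu'_a(h)$ and using the factorization of weights, we get
$$
\|Tf\|_{L^p(d\mu_a)}^p \mu'_a(V)\,\cdot c = \int_{\R^{n+m}}\int_V |S\phi_x(h)|^p\,d\mu'_a(h)\,d\mu_a(x),
$$
up to the exact weight bookkeeping; one checks the weights combine to exactly $e^{2a\cdot(x'+v)}$ after the change of variables $x\mapsto\tau_h x$, which is measure-preserving for Lebesgue measure. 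Bounding the inner integral by $\|S\|^p\|\phi_x\|^p_{L^p(d\mu'_a)}$ and undoing the same change of variables gives $\|S\|^p\,\mu'_a(VK^{-1})\,\|f\|^p$. Hence
$$
\|T\|^p\le \|S\|^p\,\frac{\mu'_a(VK^{-1})}{\mu'_a(V)}.
$$

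\textbf{Step 3 (the F\o lner ratio, the main obstacle).} This step is the main obstacle: $\mu'_a$ has exponential growth, so $\mu'_a(VK^{-1})/\mu'_a(V)$ does not tend to $1$ for balls. The remedy I would try first is to choose $V$ adapted to the weight, e.g. $V=\{v_1\in[-L,0]\}\times$ (large Euclidean box in the remaining coordinates). Enlarging by a compact $K$ shifts $v_1$ by at most $c_K$, which multiplies the weight by at most $e^{2c_K}$. I would exploit the freedom in where the $v_1$-interval is placed, together with the averaging in $h$, so that the boundary region in the $v_1$-direction carries small weighted measure relative to $V$. If the ratio cannot be made $\to 1$ this way, a cleaner route is to conjugate: $U$-type multiplication by $e^{a\cdot v}$ turns $\mu'_a$ into Haar measure and $S$ into convolution with the kernel $e^{a\cdot v(2/p-1)}\times$ (a modification of $k$). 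Right convolution operators on the amenable nilpotent group $\h_{n,m}$ then transfer by the standard Theorems 2.4/2.6 of \cite{Transference_methods_in_analysis}, using genuine F\o lner sets, and the same conjugation on $\R^{n+m}$ handles $T$.

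\textbf{Step 4 (weak type).} The weak-type statement (ii) follows by running the same argument with level sets: $\mu_a\{|Tf|>\lambda\}\,\mu'_a(V)$ is rewritten via the factorization as a double integral of $\mathbf 1_{\{|S\phi_x|>\lambda\}}$. This is bounded by $\|S\|_{\rm weak}^p\lambda^{-p}\int\|\phi_x\|_p^p\,d\mu_a(x)$, again yielding the F\o lner ratio.
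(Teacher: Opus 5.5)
Your Coifman--Weiss skeleton matches the paper's argument: the truncation in Step 1, the identity $\sigma_hTf(x)=S\phi_x(h)$ for $h\in V$ with $\phi_x=\mathbf 1_{VK^{-1}}(\cdot)\,\sigma_{(\cdot)}f(x)$ followed by Fubini in Step 2, and the level-set variant in Step 4. However, the weight bookkeeping in Step 2 is wrong, and this leaves a real gap.

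The ratio you arrive at, $\mu'_a(VK^{-1})/\mu'_a(V)$, can never be made close to $1$. Take $g_0=(u_0,v_0,s_0)\in K$. Right translation by $g_0^{-1}$ preserves Haar measure and shifts the $v$-coordinate by $-v_0$. Hence $\mu'_a(VK^{-1})\ge \mu'_a(Vg_0^{-1})=e^{-2a\cdot v_0}\,\mu'_a(V)$ for \emph{every} $V$. This is bounded away from $1$ as soon as $K$ contains a point with $a\cdot v_0<0$, and it blows up as $K$ exhausts the group.

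So your first remedy (slabs in $v_1$) cannot work. Your second remedy conjugates by $e^{2a\cdot x'/p}$ and $e^{2a\cdot v/p}$. This turns $T$ into $\int k(g)e^{2a\cdot v/p}\sigma_{g^{-1}}\,dg$ on $L^p(dx)$ and $S$ into right convolution with $k\,e^{2a\cdot v/p}$ on $L^p(dg)$. That does give (i), but not (ii). Multiplication by $e^{2a\cdot x'/p}$ is an isometry $L^p(d\mu_a)\to L^p(dx)$, but it does not intertwine the quasi-norms of $L^{p,\infty}(d\mu_a)$ and $L^{p,\infty}(dx)$, since the level sets change. The unweighted weak-type transference theorem therefore bounds different quantities. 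Step 4 ends with the same bad ratio, so statement (ii) is not proved by your proposal. Its case $p=1$ is exactly what Theorem \ref{thm:1_bdd-positive} needs.

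The missing observation is that the weights cancel exactly, as your own identity $\|\sigma_hF\|^p_{L^p(d\mu_a)}=e^{-2a\cdot v}\|F\|^p_{L^p(d\mu_a)}$ shows. Integrate over $h\in V$ against $d\mu'_a(h)=e^{2a\cdot v}\,dh$. The left side becomes $|V|\,\|Tf\|^p_{L^p(d\mu_a)}$, with $|\cdot|$ the Haar measure. The right side becomes $\int_{\R^{n+m}}\|\phi_x\|^p_{L^p(d\mu'_a)}\,d\mu_a(x)=\int_{VK^{-1}}e^{-2a\cdot v}\|f\|^p_{L^p(d\mu_a)}\,e^{2a\cdot v}\,dg=|VK^{-1}|\,\|f\|^p_{L^p(d\mu_a)}$.

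The relevant ratio is thus the Haar ratio $|VK^{-1}|/|V|$. Because $\h_{n,m}$ is nilpotent, hence amenable, this is $\le 1+\epsilon$ for suitable $V$, for example large Carnot--Carath\'eodory balls. This is exactly the paper's proof. It writes $\|Tf\|^p=e^{2v_1'}\|\sigma_{g'}Tf\|^p$ and averages over $g'\in V$ against $dg'/|V|$. For (ii) it uses $\mu_{e_1}(A(\lambda))=e^{2v'_1}\mu_{e_1}(A_{g'}(\lambda))$ in the same way. Both (i) and (ii) then follow directly, with no conjugation and no weighted F\o lner condition needed.
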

\begin{proof}
Note that with the given integrability condition on $k$, that is, $k \in L^1(\h_{n,m}, \, e^{2 \, a \cdot v/p} \, dg)$, both the operators $T$ and $S$ are well-defined, and in fact bounded on respective $L^p$-spaces, with their operator norm not exceeding $\|k\|_{L^1(\h_{n,m}, \, e^{2 \, a \cdot v/p} \, dg)}$. This is a direct consequence of Minkowski's integral inequality. 

\medskip 
We shall prove the proposition for $a = e_1$ only, and the proof for arbitrary $a$ follows on similar lines. We shall prove the claimed estimates only for compactly supported functions $k \in L^1(\h_{n,m}, \, e^{2v_1/p} \, dg)$. The result for arbitrary $k$ follows via standard density arguments. 

\medskip 
Let $K$ denote the support of function $k$. Given $\epsilon>0$, take $V$ to be a neighborhood of $0 \in \h_{n,m}$, having positive finite measure and such that 
$$
\frac{|VK^{-1}|}{|V|} \leq 1+\epsilon,
$$
where $|\cdot|$ denotes the Haar measure on $\h_{n,m}$ (see (2.1) in \cite{Transference_methods_in_analysis}). 

\medskip 
\begin{enumerate}[(i)]
\item Let us write $s_1 = \|S\|_{L^p(\h_{n,m}, \, d\mu'_{e_1}) \to L^p(\h_{n,m}, \, d\mu'_{e_1})}$. Now, consider
\begin{align*}
\int_{\R^{n+m}} |\sigma_{g'} Tf(x)|^p \, d\mu_{e_1}(x) &= \int_{\R^{n+m}} \left|Tf \left(x'+v',x'' + s' + u'^Tx' +\frac{u'^Tv'}{2}\right)\right|^p d\mu_{e_1}(x)  
\\ & = \int_{\R^{n+m}} |Tf(x',x'')|^p \, e^{-2v'_1} \, d\mu_{e_1}(x), 
\end{align*}
which implies that 
\begin{align*}
\int_{\R^{n+m}} |Tf(x',x'')|^p \, d\mu_{e_1}(x) =  e^{2v'_1} \int_{\R^{n+m}} |\sigma_{g'} Tf(x)|^p \, d\mu_{e_1}(x).
\end{align*}

Integrating both sides of the above identity over $g' \in V$, we get 
\begin{align*}
\|Tf\|_{L^p(\R^{n+m}, \, d\mu_{e_1})}^p &= \frac{1}{|V|} \int_{V} e^{2v'_1} \int_{\R^{n+m}} |\sigma_{g'} Tf(x)|^p \, d\mu_{e_1}(x) \, dg' 
\\ &=  \frac{1}{|V|} \int_{V} e^{2v'_1} \int_{\R^{n+m}} \left|\int_{\h_{n,m}} k(g) \, \sigma_{g'g^{-1}} f(x) \, dg \right|^p d\mu_{e_1}(x) \, dg'
\\ &=  \frac{1}{|V|}\int_{\R^{n+m}} \int_{V} \left|\int_{\h_{n,m}} k(g) \, \sigma_{g'g^{-1}} f(x) \, dg \right|^p e^{2v'_1} \, dg' \, d\mu_{e_1}(x) 
\\ & =  \frac{1}{|V|}\int_{\R^{n+m}} \int_{\h_{n,m}} \left|\int_{\h_{n,m}} k(g) \, \sigma_{g'g^{-1}} f(x) \, \chi_{VK^{-1}}(g'g^{-1}) \, dg \right|^p e^{2v'_1} \, dg' \, d\mu_{e_1}(x) 
\\ & \leq \frac{s_1^p}{|V|}\int_{\R^{n+m}} \int_{\h_{n,m}} \left| \sigma_{g}f(x) \, \chi_{VK^{-1}}(g) \right|^p e^{2v_1} \, dg \, d\mu_{e_1}(x) 
\\ & = \frac{s_1^p}{|V|}\int_{\h_{n,m}} |\chi_{VK^{-1}}(g)| \int_{\R^{n+m}} |\sigma_{g}f(x)|^p \, d\mu_{e_1}(x) \,  e^{2v_1} \, dg 
\\ & = \frac{s_1^p}{|V|}\int_{\h_{n,m}} |\chi_{VK^{-1}}(g)| \int_{\R^{n+m}} |f(x)|^p   \, d\mu_{e_1}(x) \, dg 
\\ & = \frac{s_1^p}{|V|} \, |VK^{-1}| \, \|f\|_{L^p(\R^{n+m}, \, d\mu_{e_1})}^p 
\\ & \leq s_1^p \, \|f\|_{L^p(\R^{n+m}, \, d\mu_{e_1})}^p \, (1+\epsilon).
\end{align*}

Since $\epsilon$ is arbitrary, we get 
$$
\|Tf\|_{L^p(\R^{n+m}, \, d\mu_{e_1})} \leq   s_1 \, \|f\|_{L^p(\R^{n+m}, \, d\mu_{e_1})},
$$ 
which proves part (i).

\medskip 
\item Let $s_2 = \|S\|_{L^p(\h_{n,m}, \, d\mu'_{e_1}) \to L^{p,\infty}(\h_{n,m}, \, d\mu'_{e_1})}$. Now, define the following sets: 
\begin{align*}
A(\lambda) &= \left\{ x \in \R^{n+m}: |Tf(x)| > \lambda \right\}, \\ 
\text{and} \quad A_{g'}(\lambda) &= \left\{ x \in \R^{n+m}: \left|\int_{\h_{n,m}} k(g) \, \sigma_{g' g^{-1}} f(x) \, dg \right| > \lambda \right\}.
\end{align*}
Clearly, $A(\lambda) = A_{e}(\lambda),$ where $e$ is identity of $\h_{n,m}.$

\medskip 
Also, let us write
$$
F(\lambda) = \left\{ (g',x) \in V \times \R^{n+m}: \left|\int_{\h_{n,m}} k(g) \sigma_{g' g^{-1}} f(x) \, dg \right| > \lambda \right\} . 
$$

Observe that $\mu_{e_1}(A(\lambda)) = e^{2v_1^\prime} \mu_{e_1}(A_{g'}(\lambda)).$ Therefore, upon integrating both sides over $g' \in V,$ we would get 
\begin{align*}
\mu_{e_1}(A(\lambda)) &= \frac{1}{|V|} \int_{V} e^{2v_1^\prime} \, \mu_{e_1}(A_{g'}(\lambda)) \, dg' \\
&= \frac{1}{|V|} \int_{\h_{n,m}} e^{2v_1^\prime}\left( \int_{\R^{n+m}} \chi_{F(\lambda)}(g',x) \, d\mu_{e_1}(x) \right) dg' \\
&= \frac{1}{|V|} \int_{\R^{n+m}} \left( \int_{\h_{n,m}} e^{2v_1^\prime} \, \chi_{F(\lambda)}(g',x) \, dg' \right) d\mu_{e_1}(x) \\
&= \frac{1}{|V|} \int_{\R^{n+m}} \mu'_{e_1} \left(\left\{g' \in V: \left|\int_{\h_{n,m}} k(g) \, \sigma_{g' g^{-1}} f(x) \, dg \right| > \lambda \right\} \right) d\mu_{e_1}(x) \\
& \leq \left(\frac{s_2}{\lambda} \right)^p \frac{1}{|V|} \int_{\R^{n+m}} \left(\int_{VK^{-1}} |\sigma_{g} f(x)|^p \, e^{2v_1} \, dg \right) d\mu_{e_1}(x) \\
& = \left(\frac{s_2}{\lambda} \right)^p \frac{1}{|V|} \int_{VK^{-1}} \left(\int_{\R^{n+m}} |\sigma_{g} f(x)|^p \, d\mu_{e_1}(x) \right) e^{2v_1} \, dg \\
& = \left(\frac{s_2}{\lambda} \right)^p \frac{1}{|V|} \int_{VK^{-1}} \left(\int_{\R^{n+m}} |f(x)|^p \, d\mu_{e_1}(x) \right) dg \\
& = \left(\frac{s_2}{\lambda} \right)^p \frac{|VK^{-1}|}{|V|} \, \|f\|^p_{L^p(\R^{n+m}, \, d\mu_{e_1})} \\
& \leq \left(\frac{s_2}{\lambda} \right)^p \|f\|^p_{L^p(\R^{n+m}, \, d\mu_{e_1})} \, (1+\epsilon). 
\end{align*}

Since, this is true for every $\epsilon>0,$ we have 
$$
\sup_{\lambda > 0} \, \lambda \, \mu_{e_1}(A(\lambda))^{1/p} \leq s_2 \, \|f\|_{L^p(\R^{n+m}, \, d\mu_{e_1})},
$$
which proves part (ii).
\end{enumerate}
\end{proof}


\subsection{Proofs of Theorems \ref{thm:p_bdd} and \ref{thm:1_bdd-positive}} \label{subsec:proofs-thms-positive} 

We are now ready to prove Theorems \ref{thm:p_bdd} and \ref{thm:1_bdd-positive} with the help of the transference principle discussed in Subsection \ref{transference-principle} applied on the regularized Riesz transforms from Subsection 
\ref{subsec:Reg-Riesz-transforms}.

\begin{proof}[Proof of Theorem \ref{thm:p_bdd}]
Let us take and fix $N \geq \frac{Q}{2} + 1$. With that, we have 
\begin{align} 
\label{est:riesz-transforms-comparative-bounds}
\|R_{\alpha,e_1, \epsilon, \delta} \|_{L^p(d\mu_{e_1}) \to L^p(d\mu_{e_1})} 
& \leq \|\tilde{R}_{\tilde{\alpha}, e_1, \epsilon, \delta}\|_{L^p(d\mu_{e_1}') \to L^p(d\mu_{e_1}')} \\ 
\nonumber & \leq C_p \| \tilde{R}_{\tilde{\alpha},e_1} \|_{L^p(d\mu_{e_1}') \to L^p(d\mu_{e_1}')} \leq C_{\alpha, p} < \infty, 
\end{align}
where the first inequality follows from Proposition \ref{prop:transference} (in view of Lemma \ref{prop:integrability_of_kernel} and Proposition \ref{prop:reg-Riesz-intergral-form}), the second inequality follows from Proposition \ref{prop:Relation_between_two_riesz_gp_transforms}, and the third inequality is nothing but the fact that the Riesz transforms on the Heisenberg-Reiter groups are $L^p$-bounded for $1<p<\infty$ (as stated in Theorem \ref{thm:Lohoue-Mustapha-Riesz-Lp}). 

\medskip 
Now, note that for any $f \in \bigcap_{\alpha} D(X^{\alpha})$, we can write 
$$ X^\alpha f = R_{\alpha,e_1,\epsilon,\delta} (I+\epsilon G_{e_1})^{N} (\delta I+ G_{e_1} )^{\frac{k}{2}} f, $$
and therefore we can make use of \eqref{est:riesz-transforms-comparative-bounds} to conclude that 
\begin{align*}
\|X^\alpha f\|_{L^p(d\mu_{e_1})} & \leq C_{\alpha, p} \, \|(I+\epsilon G_{e_1})^{N} (\delta I+ G_{e_1} )^{\frac{k}{2}} f\|_{L^p(d\mu_{e_1})}. 
\end{align*}

Taking limit as $\epsilon \to 0$ in the right hand side of the above inequality, we get 
\begin{equation} 
\label{est:delta-pending} 
\|X^\alpha f\|_{L^p(d\mu_{e_1})} \leq C_{\alpha, p} \, \| (\delta I+ G_{e_1} )^{k/2} f \|_{L^p(d\mu_{e_1})}. 
\end{equation}

\medskip 
Since the estimate in \eqref{est:delta-pending} is true for all small $\delta > 0$, it is natural to expect that with $\delta \to 0$ we should have 
\begin{equation*} 
\|X^\alpha f\|_{L^p(d\mu_{e_1})} \leq C_{\alpha, p} \, \| G_{e_1}^{k/2} f \|_{L^p(d\mu_{e_1})}, 
\end{equation*}
which indeed is true as we explain here. 

\medskip 
It suffices to show that 
$$
\lim_{\delta \to 0}\|(\delta I + G_{e_1})^{\gamma} f -G_{e_1}^\gamma f\|_{L^{p}(d\mu_{e_1})} = 0
$$
for all $\gamma > 0$. 

\medskip 
Obviously, the above identity is true whenever $\gamma$ is a positive integer, so one is left to verify it only for $\gamma \in (0,1)$. In fact, one can show that 
$$
\lim_{\delta \to 0}\|(\delta I + G_{e_1})^{\gamma} - G_{e_1}^\gamma \|_{L^{p}(d\mu_{e_1}) \to L^{p}(d\mu_{e_1})} = 0
$$
for every $\gamma \in (0,1)$. We do so following the ideas from the proofs of Lemma 4.2 of \cite{Heat_kernel_and_Riesz_transform_on_nilpotent_Lie_groups} and Section II.3.2 of \cite{Elliptic_operators_and_Lie_groups}as follows. 

\medskip 
Let us fix a $\gamma \in (0,1)$, and write the constant $C_\gamma^{-1} = \int_{0}^{\infty} s^{-\gamma} (1+s )^{-1} \, ds$. Then, by functional calculus, we have 
\begin{align*}
& (\delta I + G_{e_1})^{\gamma}  -G_{e_1}^\gamma  \\
& = C_\gamma \int_{0}^{\infty} s^{-\gamma} \left\{(\delta I+ G_{e_1})(I+ s(\delta I + {G_{e_1}}))^{-1}  - G_{e_1}(I + s G_{e_1})^{-1}  \right\} ds \\
& = C_\gamma \int_{0}^{\infty} s^{-\gamma-1} \left\{(\delta I+ G_{e_1})(s^{-1} I+ (\delta I + {G_{e_1}}))^{-1}  - G_{e_1}(s^{-1} I + G_{e_1})^{-1}  \right\} \, ds \\
& = C_\gamma \int_{0}^{\infty} s^{-\gamma-2} \int_{0}^{\infty} e^{-\frac{t}{s}} \left\{(1-e^{-t(\delta I + G_{e_1})})  - (1-e^{-tG_{e_1}}) \right\} \, dt \, ds \\
& = C_\gamma \int_{0}^{\infty} s^{-\gamma-2} \int_{0}^{\infty} e^{-\frac{t}{s}} \, (1 - e^{-t\delta}) \, e^{-tG_{e_1}}  \, dt \, ds \\
& = C_\gamma \int_{0}^{\infty} s^{-\gamma-1} \int_{0}^{\infty} e^{-r} (1-e^{-rs \delta}) e^{-rs
G_{e_1}}  \, dr \, ds.
\end{align*}

\medskip 
Since $\displaystyle \sup_{s>0} \|e^{-s \, G_{e_1}} \|_{L^{p}(d\mu_{e_1}) \to L^{p}(d\mu_{e_1})} \leq 1,$ we get 
\begin{align*}
&\|(\delta I + G_{e_1})^{\gamma} -G_{e_1}^\gamma \|_{L^{p}(d\mu_{e_1}) \to L^{p}(d\mu_{e_1})} \\
& \leq C_\gamma \int_{0}^{\infty} s^{-\gamma-1} \int_{0}^{\infty} e^{-r} (1-e^{-rs \delta}) \, \|e^{-rs
G_{e_1}} \|_{L^{p}(d\mu_{e_1}) \to L^{p}(d\mu_{e_1})} \, dr \, ds \\
& \leq C_\gamma \int_{0}^{\infty} s^{-\gamma-1} \int_{0}^{\infty} e^{-r} (1-e^{-rs \delta}) \, dr \, ds \\
& = C_\gamma \int_{0}^{\infty} s^{-\gamma-1} \left(1-\frac{1}{1+s\delta}\right) \, ds \\
& = C_\gamma \, \delta \int_{0}^{\infty} s^{-\gamma} (1+s \delta)^{-1} \, ds \\
& =  C_\gamma \, \delta^{\gamma} \int_{0}^{\infty} s^{-\gamma} (1+s )^{-1} \, ds \\
& =  \delta^{\gamma}, 
\end{align*}
which implies that 
$$
\lim_{\delta \to 0}\|(\delta I + G_{e_1})^{\gamma} -G_{e_1}^\gamma \|_{L^{p}(d\mu_{e_1}) \to L^{p}(d\mu_{e_1})} = 0.
$$

\medskip 
Thus we have shown that 
\begin{equation*} 
\|X^\alpha f\|_{L^p(d\mu_{e_1})} \leq C_{\alpha, p} \, \| G_{e_1}^{k/2} f \|_{L^p(d\mu_{e_1})}, 
\end{equation*}
and upon replacing $f$ by $G_{e_1}^{-k/2} f$, the claim of the theorem follows. 
\end{proof}

Our proof of Theorem \ref{thm:1_bdd-positive} follows from arguments similar to those in the previous proof. 

\begin{proof}[Proof of Theorem \ref{thm:1_bdd-positive}] 
We have to show that when $m=1$, the first order Riesz transforms are weak-type $(1,1)$. This can once again be proved using the transference from the Heisenberg group $\h_{n,1}$. In doing so, let us note that for $p=1$ we have estimate analogous to  
\eqref{est:riesz-transforms-comparative-bounds} given by 
\begin{align*} 
\|R_{\alpha,e_1, \epsilon, \delta} \|_{L^1(d\mu_{e_1}) \to L^{1,\infty}(d\mu_{e_1})} 
& \leq \|\tilde{R}_{\tilde{\alpha}, e_1, \epsilon, \delta}\|_{L^1(d\mu_{e_1}') \to L^{1,\infty}(d\mu_{e_1}')} \\ 
& \lesssim \| \tilde{R}_{\tilde{\alpha},e_1} \|_{L^1(d\mu_{e_1}') \to L^{1,\infty}(d\mu_{e_1}')} < \infty, 
\end{align*}
and here the last inequality holds true from Theorem \ref{thm:LS-Riesz-Heisenberg}. 

\medskip 
With the above estimate, one can essentially repeat the rest of the proof of Theorem \ref{thm:p_bdd} to conclude that the first order Riesz transforms $R_{\alpha, e_1}$, with $|\alpha|=1$, are of weak-type $(1,1)$ with respect to $d\mu_{e_1}$. 
\end{proof}


\section{Proof of Theorem \ref{thm:1_bdd-negative}}
\label{sec:1_bdd-negative}

In this section, we shall deduce Theorem \ref{thm:1_bdd-negative} from Theorem 1.1 of \cite{Li-Sjogren-drift-sharp-endpoint-Euclidean-Canad-2021} via a suitable transference. For the same, we shall show that for any $f \in C_c^\infty(\R^{n+m})$, appropriately scaled Riesz transform $\partial^k_{x_1'} \, (G_{e_1/R})^{-k/2} f$ converges pointwise to the Riesz transform $\partial^k_{x'_1} \, (\Delta_{e_1})^{-k/2} \, f$. Here, $\Delta_{e_1}$ stands for the standard Laplacian with drift $e_1$ on $\R^{n+m}$. 

\medskip 
Let us fix $(\xi',\xi'') \in \R^{n+m}$ such that $|\xi'| = |\xi''| =1,$ and consider the Euclidean translation of suitable functions on $\R^{n+m}$ by $(\xi',\xi'')$ given by 
$$ Uf(x',x'') = f(x'+\xi',x''+\xi'').$$ 
Clearly, $U^{-1} f(x',x'')= f(x'-\xi', x''-\xi'').$ 

\medskip  
Also, for any $R>0,$ let $\Lambda_R$ stand for the standard (isotropic) dilation on $\R^{n+m}$, that is, 
$$ \Lambda_R \, f(x', x'') = f(Rx', Rx''). $$ 
Clearly, $\Lambda_R^{-1} \, f(x', x'')= f \left( R^{-1} x', R^{-1} x'' \right).$ 

\begin{proposition} 
\label{prop:pointwise-convergence}
For any $k \geq 1$ and $f \in C_c^\infty(\R^{n+m})$, the following pointwise convergence holds true: 
\begin{equation} \label{eq:Riesz_transform_pointwise_convergence}
\lim_{R \to 0^+} \left( \Lambda_{R} \, U \right) \left( \partial^k_{x_1'} \, (G_{e_1/R})^{-k/2} \right) \left( \Lambda_{R} \, U \right)^{-1} f(x', x'') = \partial^k_{x'_1} \, (\Delta_{e_1})^{-k/2} \, f(x',x''), 
\end{equation}
\end{proposition}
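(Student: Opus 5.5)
The plan is to first conjugate the operator explicitly, so that the statement becomes a convergence of operators with nearly Euclidean coefficients, and then pass to the limit inside the heat-semigroup representation.

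\textbf{Step 1: the conjugated operator.} A direct computation shows that $U$ replaces $x'$ by $x'+\xi'$ in the coefficients. It also shows that $\Lambda_R D \Lambda_R^{-1} = R^{-1}D$ for every first-order derivative $D$, with every coefficient $c(y)$ evaluated at $y=Rx$. Hence
$$
(\Lambda_R U)\, G_{e_1/R}\, (\Lambda_R U)^{-1} = R^{-2} L_R, \qquad L_R := -\Delta_{x'} - |\xi'+Rx'|^2 \Delta_{x''} - 2\,\partial_{x'_1}.
$$
By the functional calculus this gives $(\Lambda_R U)(G_{e_1/R})^{-k/2}(\Lambda_R U)^{-1} = R^{k} L_R^{-k/2}$. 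Likewise $(\Lambda_R U)\,\partial^k_{x'_1}\,(\Lambda_R U)^{-1} = R^{-k}\partial^k_{x'_1}$. The powers of $R$ cancel, and the left-hand side of \eqref{eq:Riesz_transform_pointwise_convergence} is exactly $\partial^k_{x'_1} L_R^{-k/2} f$.

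Since $|\xi'|=1$, on any fixed compact set we have $|\xi'+Rx'|^2\to 1$ in $C^\infty$. So $L_R \to -\Delta - 2\partial_{x'_1} = \Delta_{e_1}$ on $\R^{n+m}$ locally uniformly in coefficients. In particular $L_R \phi \to \Delta_{e_1}\phi$ for every $\phi\in C_c^\infty$.

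\textbf{Step 2: kernel representation and pointwise limits.} I would write
$$
\partial^k_{x'_1} L_R^{-k/2} f(x) = \frac{1}{\Gamma(k/2)}\int_0^\infty t^{\frac k2 -1} \int \partial^k_{x'_1} P^R_t(x,y)\, f(y)\, dy\, dt,
$$
where $P^R_t$ is the heat kernel of $L_R$. It is obtained by transporting $H_{t,e_1/R}(x,y) = e^{-t/R^2} e^{-(x'_1+y'_1)/R} H_t(x,y)$ through $\Lambda_R U$. Because the Hermite/Mehler formula \eqref{eq:heat_kernel_hermite} is explicit, one can check directly that $P^R_t(x,y)$ and its $x'_1$-derivatives converge pointwise to the Euclidean drifted heat kernel $e^{-t}e^{-(x_1'-y_1')}(4\pi t)^{-(n+m)/2}e^{-|x-y|^2/4t}$ and its derivatives. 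The point is that after the rescaling, the relevant frequencies are $\lambda$ of size $R^{-2}\cdot R^{2}$, that the $x'$-variable sits near $\xi'$, and that $|\lambda|\tanh(|\lambda| t)$ and $\coth$ linearize.

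\textbf{Step 3: domination (the main obstacle).} Dominated convergence in $(t,y)$ needs bounds uniform in $R\in(0,1)$. There are two regimes.

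\emph{Large $t$.} Here the factor $e^{-t}$ coming from the shift $|a|^2$ in \eqref{eq:Relation_between_Grushin_and_drift_Grushin} provides integrability. To use it, I would need the Gaussian bound $|B(x,\sqrt t)|^{-1}e^{-b\,d(x,y)^2/t}$ in rescaled form. The ball volume \eqref{eq:ball_volume} becomes $r^{n+m}(r+|\xi'+Rx'|)^m \gtrsim r^{n+m}$ uniformly. The metric $d$ rescales to a distance comparable to the Euclidean one on bounded sets, and the compensation of $e^{-(x_1'-y_1')}$ by the Gaussian uses the same completing-the-square argument as in Lemma \ref{prop:integrability_of_kernel}.

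\emph{Small $t$.} The kernel $\partial^k P^R_t$ is not integrable against $t^{k/2-1}$ near the diagonal. Here I would first try to move derivatives onto $f$ by subtracting a Taylor polynomial: write $\int \partial^k_{x'_1}P^R_t(x,y)\bigl(f(y)-T_x^{k-1}f(y)\bigr)dy$. The polynomial part should be controlled using $L_R$ acting on polynomials together with the semigroup property. The remainder then gains $|x-y|^{k}\sim t^{k/2}$, which makes the $t$-integral converge. The derivative bounds on $P^R_t$ needed here I would get from \eqref{est:gradient-group-heat-kernel-bounds} via the representation \eqref{eq:relation_between_heat_kernel_of_Grushin_and_Reiter_group} and \eqref{eq:vector_fields_and_sigma}, rescaled.

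\textbf{Fallback.} If the uniform small-time control proves messy, I would instead split $L_R^{-k/2}=\int_0^1+\int_1^\infty$. On $\int_1^\infty$ I would use kernel convergence as above. On $\int_0^1$ I would use the local smoothness of the coefficients together with a Trotter--Kato argument: strong convergence $e^{-tL_R}\phi\to e^{-t\Delta_{e_1}}\phi$ in $C^k$ on compacts, combined with local hypoelliptic (here actually elliptic, since $|\xi'+Rx'|\approx1$) regularity estimates uniform in $R$.
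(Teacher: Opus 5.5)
Your proposal is correct, but it takes a genuinely different route from the paper. Your Step 1 is right: $(\Lambda_RU)G_{e_1/R}(\Lambda_RU)^{-1}=R^{-2}L_R$, with every $L_R$ self-adjoint on the same space $L^2(e^{2x_1'}dx)$, so the left-hand side is exactly $\partial^k_{x_1'}L_R^{-k/2}f$.

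The paper never makes this reduction explicit. It inserts the Mehler formula, takes the Fourier transform in $x''$ and uses Parseval in $x'$, and writes everything as an integral against $\widehat F$ with $F=e^{x_1'}f$. It then tracks the powers of $R$ through the Leibniz expansion (only the term $k_1=k$, $k_2=0$ survives) and passes to the limit using $R|\lambda''|\coth(2tR|\lambda''|)\to 1/(2t)$ and $R^{-1}\tanh(tR|\lambda''|)\to t|\lambda''|$. What the Fourier route buys is that the $x_1'$-derivatives become polynomial factors absorbed by the Schwartz decay of $\widehat F$. The integrand is therefore bounded near $t=0$, so the small-time singularity you identify never arises, and no heat-kernel gradient bounds are needed.

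Your physical-space route is more conceptual and would transfer to other operators, but it needs three ingredients you only sketch:
\begin{enumerate}[(i)]
\item Gaussian bounds for $X_1^jH_t$. These come from right-invariant derivatives of $p_t$ via \eqref{eq:vector_fields_and_sigma}, so you must use $p_t(g^{-1})=p_t(g)$ before invoking \eqref{est:gradient-group-heat-kernel-bounds}.
\item An $R$-independent dominating function. One choice comes from the bound $d(Rx+\xi,Ry+\xi)/R\ge |x'-y'|+\min\{|x''-y''|/(2+|x'|+|y'|),\,|x''-y''|^{1/2}\}$ for $0<R<1$.
\item For the Taylor step, the fact that the polynomial part vanishes identically, not merely that it is ``controlled''. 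This holds because $e^{-tL_R}$ maps polynomials of degree at most $k-1$ into themselves, which follows from $e^{-tG_a}q(z)=e^{-|a|^2t}\int p_t(g)\,e^{a\cdot v}\,q(\tau_g^{-1}z)\,dg$ and the fact that $\tau_g^{-1}$, $\Lambda_R$ and $U$ are affine. Hence $\partial^k_{x_1'}$ kills that part.
\end{enumerate}

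Your fallback is in fact the shortest rigorous argument. Since $L_R\ge 1$, $C_c^\infty$ is a core for $\Delta_{e_1}$, and $L_R\phi\to\Delta_{e_1}\phi$ in $L^2(e^{2x_1'}dx)$, strong resolvent convergence gives $L_R^{-k/2}f\to\Delta_{e_1}^{-k/2}f$ in that space. The norms $\|L_R^{j}f\|$ are bounded uniformly in $R$. Combined with uniform interior elliptic estimates near the fixed point, where $|\xi'+Rx'|\approx 1$, this upgrades the $L^2$ convergence to $C^k$ convergence there.
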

where $\Delta_{e_1}$ denotes the Laplacian with drift on the Euclidean space $\R^{n+m}$. 

\medskip 
Before proving Proposition 
\ref{prop:pointwise-convergence}, let us see how we can use it to prove Theorem \ref{thm:1_bdd-negative}.

\begin{proof}[Proof of Theorem \ref{thm:1_bdd-negative}] 
As mentioned earlier, in view of rotation and dilation arguments, it suffices to work in the case of the drift vector $e_1$. We shall show that for any $k \geq 3$, the Riesz transform $R_{ke_1, e_1} = \partial^k_{x_1'}(G_{e_1})^{-k/2}$ is not of weak-type $(1,1)$ with respect to $d\mu_{e_1}$. In doing so, we shall make use of the result of Li--Sj\"ogren \cite[Theorem 1.1]{Li-Sjogren-drift-sharp-endpoint-Euclidean-Canad-2021} on the Euclidean space $\R^{n+m}$ which guarantees that $\partial^k_{x_1'}(\Delta_{e_1})^{-k/2}$ is of weak-type $(1,1)$ with respect to $d\mu_{e_1}$ if and only if $k \leq 2.$ 

\medskip 
Assume to the contrary that for some $k \geq 3$, the Riesz transform $R_{ke_1, e_1} = \partial^k_{x_1'}(G_{e_1})^{-k/2}$ is of weak-type $(1,1)$ with respect to $d\mu_{e_1}$. Recall also from the identity \eqref{rel:dilation-effect} that 
$$ 
R_{ke_1, \, e_1/R} = \delta^{-1}_R \, R_{ke_1, \, e_1} \, \delta_R. 
$$

With $\xi = (\xi', \xi'') \in \R^{n+m}$ and operators $U$ and $\Lambda_R$ as in the beginning of this section, for any $f \in C_c^\infty(\R^{n+m})$, we have 
\begin{align*}
\mu_{e_1}&\left\{x \in \R^{n+m}: \left|\left( (\Lambda_{R} \, U) \, R_{ke_1, \, e_1/R} \, (U^{-1} \Lambda_{R}^{-1}) f \right) (x)\right|> s\right\} 
\\ & = 
\mu_{e_1}\left\{x \in \R^{n+m}: \left|\left( \Lambda_R \, U \, \delta^{-1}_R \right) \, R_{ke_1, \, e_1} \, \left( \delta_R \, U^{-1} \, \Lambda^{-1}_R \right) f (x)\right| > s\right\}
\\ & = e^{-\frac{2\xi'_{1}}{R}} \, R^m \, \mu_{e_1}\left\{ x \in \R^{n+m}: \left|R_{ke_1, \, e_1}  \left( \delta_R \, U^{-1} \, \Lambda^{-1}_R f \right) (x)\right| > s\right\}
\\ & \lesssim e^{-\frac{2\xi'_{1}}{R}}  \frac{R^m}{s} \int_{\R^{n+m}} \left| \delta_R \, U^{-1} \, \Lambda^{-1}_R f(x) \right| e^{2x'_1} \, dx' \, dx''
\\ & = \frac{1}{s} \|f\|_{L^1(d\mu_{e_1})}. 
\end{align*}

But then, invoking Proposition \ref{prop:pointwise-convergence} and Fatou's lemma, one would get 
\begin{align*} 
& \mu_{e_1} \{x \in \R^{n+m}: | \partial^k_{x_1'}(\Delta_{e_1})^{-k/2} f(x)|> s\} 
\\ & = \mu_{e_1}\left\{x \in \R^{n+m}: \left|\lim_{R \to 0}\left( (\Lambda_{R} \, U) \, R_{ke_1, \, e_1/R} \, (U^{-1} \Lambda_{R}^{-1}) f \right) (x) \right|> s\right\} 
\\ & \leq \liminf_{R \to 0} \mu_{e_1} \left\{x \in \R^{n+m}: \left|\left( (\Lambda_{R} \, U) \, R_{ke_1, \, e_1/R} \, (U^{-1} \Lambda_{R}^{-1}) f \right) (x)\right|> s\right\} 
\\ & \lesssim \frac{1}{s} \|f\|_{L^1(d\mu_{e_1})},
\end{align*}
which contradicts Theorem 1.1 of \cite{Li-Sjogren-drift-sharp-endpoint-Euclidean-Canad-2021}, and this completes the proof of Theorem \ref{thm:1_bdd-negative}. 
\end{proof}

We are only left with the claim made in Proposition 
\ref{prop:pointwise-convergence}, which we prove below. 

\begin{proof}[Proof of Proposition \ref{prop:pointwise-convergence}] 
For convenience, we shall make use of the notations $t_h$ and $c_h$ to denote the hyperbolic tangent and the hyperbolic cotangent respectively. More precisely, $t_h (s) := \tanh (s)$ and $c_h (s) := \coth (s)$. 

\medskip 
Let us fix $f \in C_c^\infty(\R^{n+m})$ and recall that 
\begin{align} \label{eq:taking_derivative_inside_here}
\nonumber \partial_{x_1'}^k \, (G_{e_1/R})^{-k/2} \, f(x) 
& = \frac{1}{\Gamma(k/2)} \int_{0}^{\infty} t^{\frac{k}{2}-1} \, \partial_{x_1'}^k \, e^{-t \, G_{e_1/R}} f (x) \, dt \\ 
& = \frac{R^{k}}{\Gamma(k/2)} \int_{0}^{\infty} t^{\frac{k}{2}-1} \, \partial_{x_1'}^k \, e^{-t R^2 \, G_{e_1/R}} f (x) \, dt, 
\end{align}
so we shall analyse the action of $e^{-t R^2 \, G_{e_1/R}}$ on $f$. 

\medskip 
Let us write $\phi_{R, \, \xi} (y', y'') = e^{y'_1} \, \Lambda_R U f (y', y'') = e^{y'_1} \, f(Ry'+\xi',Ry''+\xi'')$. Now, 
\begin{align*}
& e^{-t R^2 \, G_{e_1/R}} \, f(x) \\ 
& = \int_{\R^{n+m}} H_{t R^2, e_1/R}(x,y) \, f(y) \, e^{\frac{2y_1'}{R}} \, dy \\
& = \int_{\R^{n+m}} e^{\frac{2\xi_1'}{R}} \,  H_{t R^2, e_1/R}(x, y + \xi) \, f(y+\xi) \, e^{\frac{2y_1'}{R}} \, dy \\
& = R^{n+m} \int_{\R^{n+m}}  e^{\frac{2\xi_1'}{R}} \, H_{t R^2, e_1/R}(x,(Ry'+\xi',Ry''+\xi'')) \, f(Ry'+\xi',Ry''+\xi'') \, e^{2y_1'} \, dy \\
& =R^{n+m} \int_{\R^{n+m}}  e^{\frac{\xi_1'}{R}} \, e^{-t} \, e^{\frac{-x_1'}{R}} \, H_{t R^2}(x,(Ry'+\xi',Ry''+\xi'')) \, \phi_{R, \, \xi} (y', y'') \, dy \\
& = (2\pi R)^{-m} \int_{\R^{n+m}} \int_{\R^m} e^{\frac{\xi_1'}{R}} \, e^{-t} \, e^{\frac{-x_1'}{R}} \, e^{i\lambda'' \cdot \left(\frac{y''}{R}-\frac{x''}{R^2}+\frac{\xi''}{R^2}\right)} \left(\frac{|\lambda''|}{2\pi \sinh{(2t|\lambda''|})}\right)^{n/2} \phi_{R, \, \xi} (y', y'') \\
& \quad \exp{\left(-\frac{|\lambda''| \, c_h{(2t|\lambda''|)}}{2} \left|\frac{x'-\xi'}{R}-y'\right|^2 \right)} \, \exp{\left(-|\lambda''| \, t_h{(t|\lambda''|)} \, \frac{x'}{R}\cdot \left(y'+\frac{\xi'}{R}\right)\right)} d\lambda'' \, dy, 
\end{align*}
with notations and formulas following the discussion surrounding \eqref{eq:heat_kernel_in_hermite_form}. 

\medskip 
In the final expression, by applying the Euclidean Fourier transform in $y''$-variable on $\R^m$, and the Euclidean Parseval's identity in $y'$-variable on $\R^n$, one can deduce that 
\begin{align*}
& e^{-t R^2 \, G_{e_1/R}} \, f(x) \\ 
&= (2\pi)^{-m} \int_{\R^{n+m}}  e^{\frac{\xi_1'}{R}} \, e^{-t} \, e^{\frac{-x_1'}{R}} \, e^{-i\lambda'' \cdot \left(\frac{x''-\xi''}{R}\right)} \, (\cosh{(2tR|\lambda''|)})^{-n/2} \, e^{2\pi i \frac{x'-\xi'}{R}\cdot\lambda'} \\
& \qquad \qquad \widehat{\phi_{R, \xi}} \left(\lambda', \frac{-\lambda''}{2\pi }\right) \exp{ \left(- \frac{|\lambda''| \, t_h{(tR|\lambda''|)}}{R} \left( 1 - \frac{t_h{(tR|\lambda''|)}}{2 \, c_h{(2tR|\lambda''|)}} \right) |x'|^2 \right)} \\ 
& \qquad \qquad \exp{\left(\frac{-2\pi^2|\lambda'|^2}{R \, |\lambda''| \, c_h{(2tR|\lambda''|)}}\right)} \, \exp{ \left( -2\pi i \, \frac{t_h{(tR|\lambda''|)}}{R \,  c_h{(2tR|\lambda''|)}} \, x'\cdot\lambda' \right)} d\lambda. 
\end{align*}

\medskip Now, when we put the above expression in the identity \eqref{eq:taking_derivative_inside_here}, the action of the derivative $\partial_{x_1'}^k$ involves the Leibniz principle and thus we obtain that $\partial_{x_1'}^k \, ( G_{e_1/R})^{-k/2} \, f(x)$ can be expressed as a finite linear combination of terms of the following type:  
\begin{align} \label{eq:taking_derivative_inside_here-2}
& \frac{(2\pi)^{-m} \, R^k}{\Gamma(k/2)}  \int_{0}^\infty \int_{\R^{n+m}} t^{\frac{k}{2}-1} \, e^{\frac{\xi_1'}{R}} \, e^{-t} \, e^{\frac{-x_1'}{R}} \, e^{-i\lambda'' \cdot \left(\frac{x''-\xi''}{R}\right)} \, e^{2\pi i \frac{x'-\xi'}{R}\cdot\lambda'} \,(\cosh{(2tR|\lambda''|)})^{-n/2} \\
& \nonumber \quad \widehat{\phi_{R, \xi}} \left(\lambda', \frac{-\lambda''}{2\pi }\right) \exp{\left(- \frac{|\lambda''| \, t_h{(tR|\lambda''|)}}{R} \left( 1 - \frac{t_h{(tR|\lambda''|)}}{2 \, c_h{(2tR|\lambda''|)}} \right) |x'|^2 \right)} \exp{\left(\frac{-2\pi^2|\lambda'|^2}{R|\lambda''| \, c_h{(2tR|\lambda''|)}}\right)} \\ 
& \nonumber \quad  \, \exp{\left(-2\pi i\frac{ t_h{(tR|\lambda''|)}}{R \, c_h{(2tR|\lambda''|)}} x'\cdot\lambda' \right)} \left(-\frac{2|\lambda''|\, t_h{(tR|\lambda''|)}}{R}\left(1-\frac{t_h{(tR|\lambda''|)}}{2 \, c_h{(2tR|\lambda''|)}}\right)\right)^{k_2} \\ 
& \nonumber \left(-\frac{1-2\pi i \lambda'_1}{R}-\frac{2|\lambda''| \, t_h{(tR|\lambda''|)} }{R}\left(1-\frac{t_h{(tR|\lambda''|)}}{2 \, c_h{(2tR|\lambda''|)}}\right) x_1' - 2\pi i \frac{t_h{(tR|\lambda''|)} \lambda'_1}{R \, c_h{(2tR|\lambda''|)}}\right)^{k_1}
\, d\lambda \, dt,  
\end{align}
where $k_1 + k_2 \leq k$ and $2 k_2 \leq k$. Also, in the mentioned finite linear combination, the constant coefficient against the leading term (for $k_1 = k$ and $k_2 = 0$) is exactly equal to 1. This is particularly important, as we shall see later when we take the limit as $R \to 0$. 

\medskip 
In order to see that taking the derivative under the integral sign is in fact valid, just note that since $t_h{(s)} \leq 1$ for all $s>0$, it is straightforward to see that all the terms appearing in the sum in \eqref{eq:taking_derivative_inside_here-2} are dominated by 
\begin{align*}
e^{\frac{(\xi_1'-x_1')}{R}} \, R^{k-(k_1+k_2)} \int_{0}^\infty \int_{\R^{n+m}} t^{\frac{k}{2}-1} \, e^{-t} & \, \left( 1 + |\lambda'|^{k_1} + |\lambda''|^{k_1} \, |x'_1|^{k_1} \right) |\lambda''|^{k_2} \left|\widehat{\phi_{R, \xi}} \left(\lambda', \frac{-\lambda''}{2\pi }\right) \right|
d\lambda \, dt, 
\end{align*}
and this integral converges absolutely, thanks to $f \in C_c^\infty(\R^{n+m})$. 

\medskip 
Thus, in view of \eqref{eq:taking_derivative_inside_here-2}, we have that $\left( \Lambda_{R} \, U \right) \left( \partial^k_{x_1'} \, (G_{e_1/R})^{-k/2} \right) \left( \Lambda_{R} \, U \right)^{-1} f(x', x'')$ can be expressed as a finite linear combination of terms of the following type: 
\begin{align} \label{eq:before_taking_R_to_zero}
& \frac{(2\pi)^{-m}}{\Gamma(k/2)} \, R^{k - k_1 - k_2} \int_{0}^\infty \int_{\R^{n+m}}  t^{\frac{k}{2}-1}   e^{-t} e^{-x_1'} e^{-i\lambda'' \cdot x''} \, e^{2\pi i x'\cdot\lambda'} \, (\cosh{(2tR|\lambda''|)})^{-n/2}  \\
& \nonumber \qquad \widehat{F} \left(\lambda', \frac{-\lambda''}{2\pi }\right) \exp{ \left(- R |\lambda''| \, t_h{(tR|\lambda''|)} \left( 1 - \frac{t_h{(tR|\lambda''|)}}{2 \, c_h{(2tR|\lambda''|)}} \right) \left|x'+\frac{\xi'}{R}\right|^2 \right)} \\ 
& \nonumber \qquad  \exp{\left(\frac{-2\pi^2|\lambda'|^2}{R|\lambda''| \, c_h{(2tR|\lambda''|)}}\right)} \, \exp{\left(-2\pi i  \frac{t_h{(tR|\lambda''|)}}{R \, c_h{(2tR|\lambda''|)}} (Rx'+\xi')\cdot\lambda' \right)} \\ 
& \nonumber \qquad \left(-1+2\pi i \lambda'_1 -2|\lambda''| \, t_h{(tR|\lambda''|)}\left(1 - \frac{t_h{(tR|\lambda''|)}}{2 \, c_h{(2tR|\lambda''|)}}\right)(Rx_1'+\xi_1') - 2\pi i \frac{ t_h{(tR|\lambda''|)}\lambda'_1}{c_h{(2tR|\lambda''|)}}  \right)^{k_1} \\
& \nonumber \qquad \left(-2|\lambda''| \, t_h{(tR|\lambda''|)} \left(1 - \frac{t_h{(tR|\lambda''|)}}{2 \, c_h{(2tR|\lambda''|)} }\right)\right)^{k_2}  d\lambda \, dt, 
\end{align}
where $F(x',x'') = f(x',x'') \, e^{x_1'}$, and as mentioned earlier, in the finite linear combination, the constant coefficient of the leading term (for $k_1 = k$ and $k_2 = 0$) is exactly equal to 1. 

\medskip 
We now take the limit as $R \to 0$ in \eqref{eq:before_taking_R_to_zero} and pass it inside the integrals which is permitted and one can verify the same using the standard generalised dominated convergence theorem. Note also that as $R \to 0$, all the integrands in terms corresponding to $k_1 < k$ in \eqref{eq:before_taking_R_to_zero} will tend to 0, and one would get 
\begin{align} 
\label{pointwise-convergence-final-argument} 
\nonumber & \lim_{R\to 0} \left( \Lambda_{R} \, U \right) \left( \partial^k_{x_1'} \, (G_{e_1/R})^{-k/2} \right) \left( \Lambda_{R} \, U \right)^{-1} f(x', x'') \\
\nonumber & = \frac{(2\pi)^{-m}}{\Gamma(k/2)} \int_{0}^\infty \int_{\R^{n+m}} t^{\frac{k}{2}-1} \, e^{-t} \, e^{-x_1'} \, e^{-i\lambda'' \cdot x''} \, e^{2\pi i x'\cdot\lambda'} \, e^{-|\lambda''|^2t} \, e^{-4t\pi^2|\lambda'|^2} \, \left(-1+2\pi i \lambda'_1\right)^{k} \\
\nonumber & \qquad \qquad \qquad \qquad \widehat{F} \left(\lambda', -\lambda''/(2\pi) \right) d\lambda \, dt \\
& = \frac{1}{\Gamma(k/2)} \int_{0}^\infty \int_{\R^{n+m}} t^{\frac{k}{2}-1} \, e^{-t} \, e^{-x_1'} \, e^{2\pi i x \cdot \lambda} \, e^{-4t \pi^2|\lambda|^2} \, \widehat{F} (\lambda) \left(-1+2\pi i \lambda'_1\right)^{k} \, d\lambda \, dt. 
\end{align} 

\medskip 
The final expression in \eqref{pointwise-convergence-final-argument} is nothing but $\partial_{x_1'}^k (\Delta_{e_1})^{-k/2}f(x',x'')$ as we now show. For the same, note that with $F(y',y'') = f(y',y'') \, e^{y_1'}$, we have 
\begin{align*}
e^{-t\Delta_{e_1}} f(x) &= (4\pi t)^{-(n+m)/2} \int_{\R^{n+m}} e^{-t} \, e^{-x'_1-y'_1} \, \exp{\left(-|x-y|^2/(4t) \right)} \, f(y) \, e^{2y'_1} \, dy \\
& = \int_{\R^{n+m}} e^{-t} \, e^{-x'_1} e^{2\pi i x \cdot \lambda} \, e^{-4\pi^2t|\lambda|^2} \widehat{F}(\lambda) \, d\lambda,
\end{align*}
which implies that 
\begin{align*}
\partial_{x'_1}^k (\Delta_{e_1})^{-k/2}f(x) 
&= \frac{1}{\Gamma(k/2)} \int_{0}^\infty \int_{\R^{n+m}} t^{\frac{k}{2}-1} \, e^{-t} e^{-x'_1} \, e^{2\pi i x \cdot \lambda}   
e^{-4\pi^2 t|\lambda|^2}
\widehat{F}(\lambda) \left( -1 + 2 \pi i \lambda'_1 \right)^{k}
\, d\lambda \, dt, 
\end{align*}
and hence we have from \eqref{pointwise-convergence-final-argument} that 
\begin{align*}
\lim_{R\to 0} \left( \Lambda_{R} \, U \right) \left( \partial^k_{x_1'} \, (G_{e_1/R})^{-k/2} \right) \left( \Lambda_{R} \, U \right)^{-1} f(x', x'') = \partial_{x_1'}^k (\Delta_{e_1})^{-k/2}f(x',x''). 
\end{align*}
This completes the proof of Proposition \ref{prop:pointwise-convergence}. 
\end{proof}


\section*{Acknowledgments}
First author is grateful to the Indian Institute of Science Education and Research Bhopal for the Senior Research Fellowship. 

\providecommand{\bysame}{\leavevmode\hbox to3em{\hrulefill}\thinspace}
\providecommand{\MR}{\relax\ifhmode\unskip\space\fi MR }
\providecommand{\MRhref}[2]{%
  \href{http://www.ams.org/mathscinet-getitem?mr=#1}{#2}
}
\providecommand{\href}[2]{#2}

\end{document}